\documentclass[11pt]{amsart}
\raggedbottom
\usepackage{float}
\usepackage[dvipsnames]{xcolor}
\usepackage{soul}
\usepackage{pgfplots}
\usepackage{dsfont}
\usepackage{bbm}

\definecolor{Cream}{RGB}{255,253,208}
\definecolor{Deepblue}{RGB}{0,153,153}
\definecolor{Deepred}{RGB}{204,0,102}
\definecolor{Creamblue}{RGB}{0,255,255}
\definecolor{Creamred}{RGB}{255,51,153}
\definecolor{Deeppurple}{RGB}{110,0,110}
\definecolor{Creampurple}{RGB}{204,0,204}
\definecolor{Redpurple}{RGB}{210,25,150}
\definecolor{Bluepurple}{RGB}{150,25,240}

\usepackage[utf8]{inputenc}
\usepackage{amsmath}
\usepackage{mathtools}
\usepackage{amsthm}
\usepackage{amsfonts}
\usepackage{amssymb}
\usetikzlibrary{calc}

\usepackage[margin=1in]{geometry}
\usepackage{tikz}
\usepackage{tikz-cd}
\usetikzlibrary{positioning}
  
\usepackage{hyperref}
\usepackage[capitalize]{cleveref}
\usepackage[shortlabels,inline]{enumitem}

\makeatletter
\def\l@subsection{\@tocline{2}{0.2em}{2.3em}{}{}}
\makeatother

\numberwithin{equation}{section}

\DeclarePairedDelimiterX{\norm}[1]{\lVert}{\rVert}{#1}

\newtheorem{theorem}{Theorem}[section]
\newtheorem*{theorem*}{Theorem}
\newtheorem{lemma}[theorem]{Lemma}
\newtheorem{proposition}[theorem]{Proposition}
\newtheorem*{proposition*}{Proposition}

\newtheorem{corollary}[theorem]{Corollary}
\theoremstyle{definition}
\newtheorem{example}[theorem]{Example}

\newtheorem{definition}[theorem]{Definition}
\newtheorem{remark}[theorem]{Remark}


\newcommand{\B}{\mathbb{B}}

\newcommand{\M}{\mathcal{M}}
\newcommand{\K}{\mathcal{K}}





  



\usepackage{biblatex}



\begin{document}

\begin{abstract}

Certain classes of multiparameter persistence modules may be encoded as signed barcodes, represented as points in a polyhedral subset of Euclidean space, we refer to as signed persistence diagrams. These signed persistence diagrams exist in the dual space of compactly supported, Lipschitz functionals on a polyhedral pair.  In the interest of statistics and machine learning on multiparameter persistence modules, we aim to embed these signed persistence diagrams into Banach or Hilbert space. We use iteratively refined triangulations to define a Schauder Basis of compactly supported Lipschitz functionals. Evaluation of these functionals embeds signed persistence diagrams into the space of real-valued sequences.  Furthermore, we show that in the larger space of relative Radon measures, the Schauder basis we have defined is minimal to induce an embedding.
\end{abstract}

\title{ A Schauder Basis for Multiparameter Persistence}
\author{Peter Bubenik}
\address{Department of Mathematics, University of Florida, Gainesville, USA}
\email{peter.bubenik@ufl.edu}
\author{Zachariah Ross}
\address{Department of Mathematics, University of Florida, Gainesville, USA}
\email{thomas.z@ufl.edu}

\keywords{Persistent homology, topological data analysis, Lipschitz functionals, Schauder basis}

\maketitle

\setcounter{tocdepth}{2}
\tableofcontents

\section{Introduction}


Topological Data Analysis (TDA) is a broad field of study that uses various techniques to extract and encode the shape of data. Working under the assumption that this data is sampled from some unknown probability distribution on Euclidean space, we may apply these techniques to understand topological features of the distribution.

For many tools used in TDA, we begin by applying a filtration function, such as Vietoris Rips or a height function, to a data set in Euclidean space. This maps the data to a filtration of topological spaces, i.e. a functor from a poset $P$ into $TOP$. Choosing a degree $k$ and composing this functor with the homology functor $H_k(-, \mathbb{F})$ yields a functor from poset $P$ into the category $vec_\mathbb{F}$ of finite dimensional vector spaces over field $\mathbb{F}$. 

Functors of this type are called \emph{persistence modules}, or more specifically, \emph{1-parameter persistence modules} when $P$ is totally ordered. The \emph{barcode} and \emph{persistence diagram} are used as descriptors of 1-parameter persistence modules. These provide multiscale topological information from the sampling. 

When 1-parameter persistence modules are derived from a filtration of data, the resulting persistence modules, and by extension the persistence diagrams, can be quite sensitive to outliers in the data. This has led some to consider subsets of data that meet a certain density threshold, but choosing the ``correct'' density threshold can be difficult.  Lesnick and Wright \cite{bifiltrations} consider all densities simultaneously, as a bifiltration of the data.  This yields a functor from  a poset $P \rightarrow Top$, where $P$ is the cross poset of two totally ordered posets. Again composing with the homology functor in chosen degree, we arrive at a functor from $P$ into $vec_\mathbb{F}$, a \emph{2-parameter persistence module. } One would hope that there is some equivalent theory of 2-parameter persistence modules as to that of 1-parameter. Alas, increasing the dimension of parameter space opens up a new can of worms. In particular, there are pointwise-finite-dimensional, indecomposable, 2-parameter persistence modules that are not interval modules (See Botnan et. al \cite{Botnan2022}).

As such, generalizing persistence diagrams to the multiparameter setting is non-trivial, and various workarounds have been proposed. Consideration has been made to generalizing the methods for vectorizing 1-parameter persistence modules to vectorize multi-parameter persistence modules. For example, Vipond \cite{MultiLandscapes} restricts a multi-parameter persistence module to lines in parameter space, and vectorizes the resulting family of 1-parameter persistence diagrams.

In a different direction that does not restrict to linear sub-posets,   Kim and M\'emoli \cite{KimMemoli} and 
Botnan, Oppermann, and Oudot \cite{signedbarcodes} have made progress in encoding  multi-parameter persistence modules into\emph{ signed barcodes}. We may represent these signed barcodes as series of signed points in Euclidean space, or more specifically, a these points exist in a particular polyhedron in Euclidean space. In unrelated work, Wagner et. al. \cite{mixup} use persistent homology tools to encode the mixture of two classes of data. The resulting persistence barcode is a set of triples of real numbers, which they call the \emph{mixup barcode}. These can also be viewed as points in a polyhedron in $\mathbb{R}^3$.

These two kinds of persistent diagrams are examples of the more general persistence diagrams we define in section 5; \emph{signed persistence diagrams on a pair $(X,A)$}, where $X$ is a polyhedron in Euclidean space and $A \subset X$ is a proper subset, composed of a finite union of sub-polyhedrons. 

More abstractly, we may think of persistence diagrams as arising from a random variable $Z$ from a probability space into a summary space of $d-$parameter persistence diagrams. The summary space of persistence diagrams is a metric space under the 1-Wasserstein distance, but not a vector space. Suppose we are given a collection $\{Z_i\}_{i=1}^N$ of random variables with the same distribution. We would like to have some way of representing a mean $\bar{Z}$ and show if $\bar{Z_i}$ converges to $\bar{Z}$, as in the setup by Bubenik \cite{Landscapes}.

 For this purpose, we make use of vectorizations of persistence diagrams, which embed them into a vector space. Multiple methods have been proposed for  persistence diagrams derived from 1-parameter persistence modules, with strong results: see Persistence Landscapes (Bubenik, \cite{Landscapes}),  Persistence Images, (Adams et. al.\cite{Adams}),  and Betti Curves (Umeda, \cite{umeda}).


We propose an embedding which maps signed persistence diagrams in arbitrary polyhedral pairs to sequences of real numbers in $\ell^1$. This method evaluates a sequence of functionals on persistence diagrams in order to embed them into sequence space. We note here that other works use various functional evaluation methods to embed 1-parameter persistence diagrams to lists of real numbers. Jose Parea, Liz Munch, and Firas Khasawneh \cite{MunchParea} use a template system of functionals on the plain to distinguish between specified groups of persistence diagrams for classifications. Furthermore, Atish Mitza and Ziga Virc \cite{atish} define functionals directly on the space of 1-parameter persistence diagrams of at most n-points, in order to map them to lists of real numbers with specified distortion functions. The method we propose employs a Schauder Basis of the order continuous dual space of signed persistence diagrams, and is general enough to be implementable on signed barcodes of arbitrary parameters, as well as variations of multiparameter persistence such as that of \emph{mixup barcodes} \cite{mixup}. Additionally, the Schauder Basis we propose is minimal in order to induce an embedding of the completion space of persistence diagrams, which consists of relative Radon measures on polyhedral pairs in Euclidean space.



We will begin in section 2, outlining various topics we think are prerequisite for definitions and proofs later in the paper. The last subsection, 2.4, is used in section 6 to expand earlier results to the space of relative Radon measures on a polyhedral pair. The material in this subsection inspired our method, but is not necessary for understanding the paper up to section 6.

In section 3, we define and explore properties of \emph{Nested Triangulations} (Definition \ref{def:nested triangulation}) on the polyhedral pair $(X,A)$. In Section 4, we use nested triangulations to define a family of functionals $\B$ on the pair $(X,A)$. We then prove that these functionals in $\B$ make up a \emph{Schauder Basis} of the vector space of compactly supported Lipschitz functionals on the pair $(X,A)$, denoted $Lip_c(X,A)$. That is, if $f \in Lip_c(X,A)$ is such a functional,  there is a unique sequence of scalar multiples of functionals in $\B$, the partial sums of which converge to $f$ (Theorem \ref{thrm: Schauder Basis}). The dual space of $Lip_c(X,A)$   in fact contains persistence diagrams on the pair $(X,A)$ \cite{PeterAlex}. This duality helps us use $\B$ to define an embedding $F_\B$ in section 5, mapping signed  persistence diagrams on the pair $(X,A)$ to sequences of real numbers in $\ell^1$.  In the case that $X \subset \mathbb{R}^2$, we may visualize this embedding of a persistence diagram $\alpha = \sum\limits_{i=0}^\infty x_i$ by decomposing the vector $F_\B(\alpha)$ into vectorizations of each individual point $x_i$. In the case that $\alpha$ is a persistence diagram of a 1-parameter persistence module,  then this collection of vectors may be viewed as a collection of color coded line segments in $\mathbb{R}^3$, with the line segment on point $x_i$ having color coded sections of length proportional to the entries of the vectorization of $x_i$.  (See figure \ref{fig:Persistence Mountainrange})
\begin{figure}[H]
    \begin{tikzcd}
        \includegraphics[scale=0.4, trim=30pt 10pt 50pt 15pt, clip]{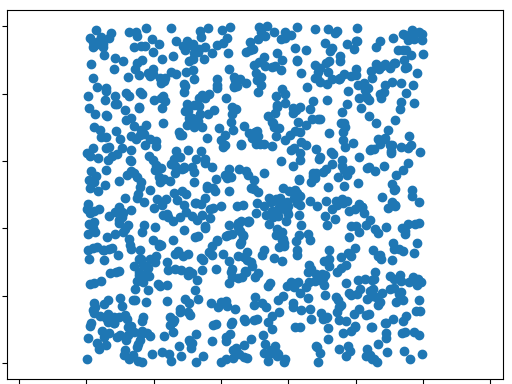}  & \includegraphics[scale=0.4, trim=7pt 10pt 80pt 12pt, clip]{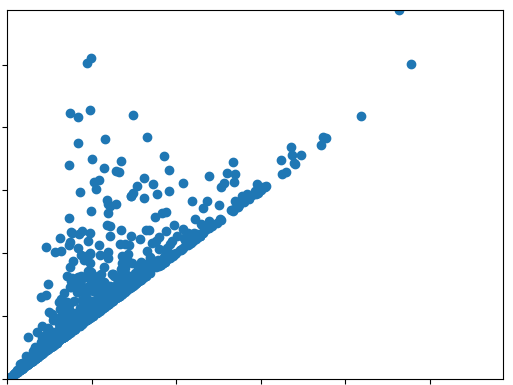}& \includegraphics[width=4cm, trim =550pt 100pt 150pt 50pt, clip]{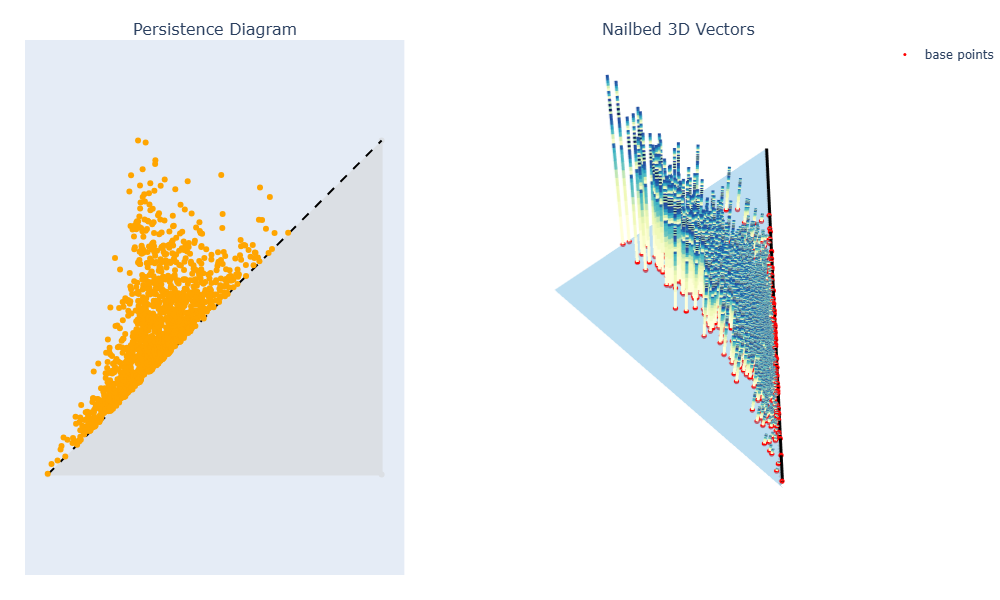}\\
    \end{tikzcd}
    \caption{\textbf{Left:} Point cloud $X \subset [0,1]^2$; \textbf{Middle:} Persistence diagram $\alpha$ in homology degree 1; \textbf{Right:} Visualization of $F_\B(\alpha)$}
    \label{fig:Persistence Mountainrange}
\end{figure}

Expanding to the 2-parameter setting, signed barcodes are represented by signed line segments in $\mathbb{R}^2$. Similar to the 1-parameter setting, we may vectorize each line segment individually. For the vectorization of line segment $x_i$, we stack sheets atop this line segment, parallel to the z-axis, with heights proportional to the entries of the vectorization of $x_i$. In this setting, line segments of negative multiplicity have sheets stacked in the negative z direction.

\begin{figure}[H]

    \begin{tikzcd}
        \includegraphics[scale=0.4, trim =80pt 45pt 120pt 50pt, clip]{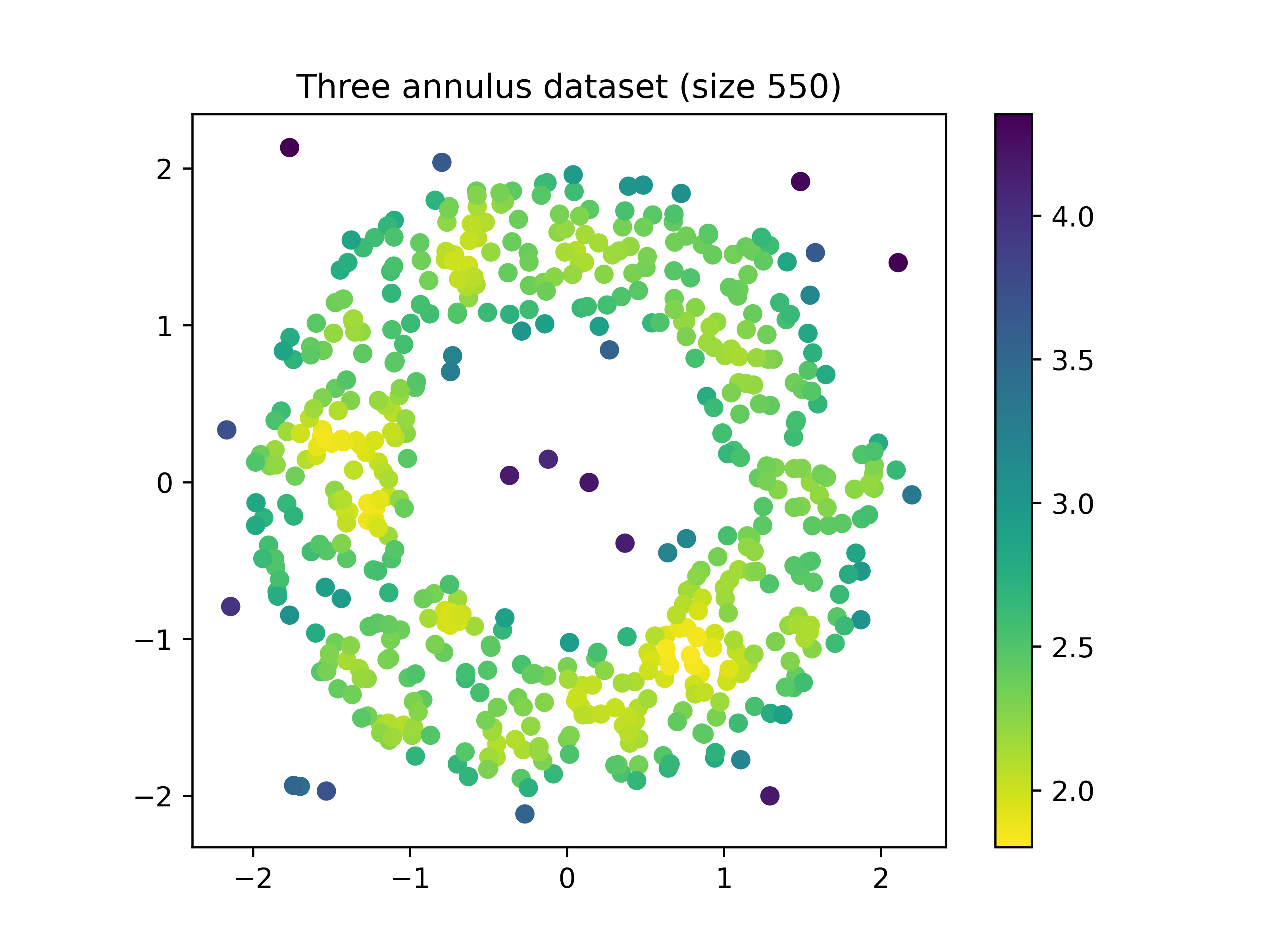}  & \includegraphics[trim=40pt 40pt 100pt 20pt, clip, scale=0.4]{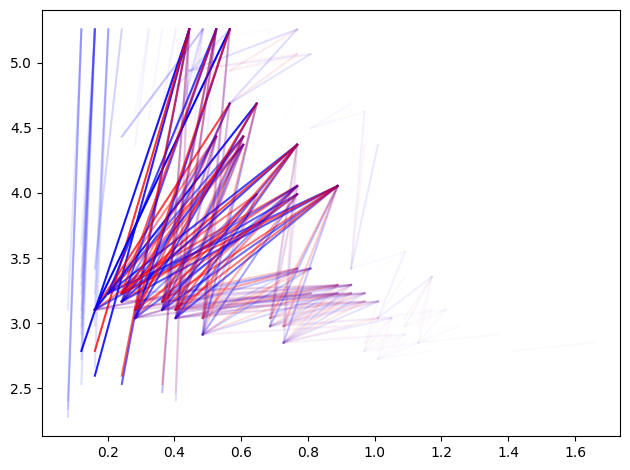}& \includegraphics[width =5cm]{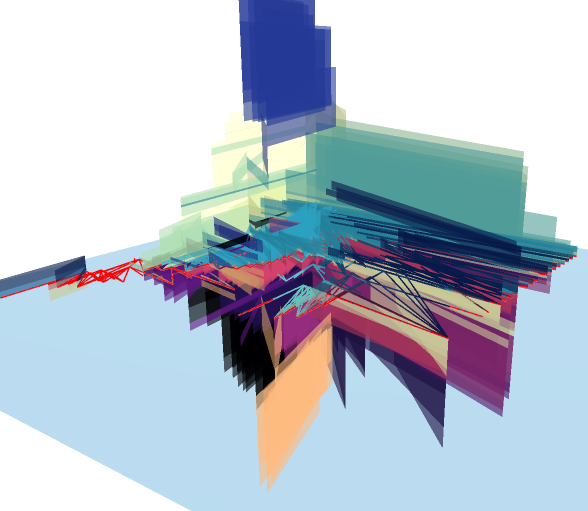}\\
    \end{tikzcd}
        \caption{\textbf{Left:} Point cloud $X \subset [0,1]^2$; \textbf{Middle:} $2-$ parameter signed barcode for the density-rips filtration,  generated by the Multipers package (Loiseaux and Schreiber, 2024) \cite{multipers}  \textbf{ Right:} Visualization of $F_\B(\alpha)$ in $\mathbb{R}^3$}

        \label{fig:2-parameter visuals pipeline}
\end{figure}

For an arbitrary Schauder Basis $\B$ of  $Lip_c(X,A)$ meeting certain conditions, we prove injectivity and stability of $F_\B$ as a map on persistence diagrams into vector space $\ell_1$ (Theorem \ref{thrm: schauder stability}). We then refine these results and gain a precise bound for the stability of $F_\B$ when $\B$ is derived from a Coxeter Freudenthal Kuhn nested triangulation using the method of section 4 (Theorem \ref{thrm:optimal bound}). We note that stability of $F_\B$  on persistence diagrams is a special case of stability on relative Radon measures, discussed in section 6. When $F_\B$ is considered as a map on relative Radon measures, we prove the Schauder Basis $\B$ formed as in section 4 is minimal for defining an embedding (Theorem \ref{thrm:minimality}).




\section{Background}

In this section, we review what we consider to be prerequisite knowledge for understanding various components of this paper. The last subsection, \emph{Relative Radon Measures}, is only necessary for extending our results in the final section of this paper, in which we extend our results to the completion space of persistence diagrams as we define them.

\subsection{Signed Barcodes as Persistence Diagrams in Polyhedral Pairs}\label{sec:2.1}

A common representation of 1-parameter persistence modules is persistence diagrams. These exist as a colellection, or formal sum, of points in the upper-left half of the plane, $\mathbb{R}^2_\leq = \{(x,y) \in \mathbb{R}^2 \ | \ x \leq y\}$. In recent research on variations of the 1-parameter persistence modules, including multiparameter persistence, it is useful to have summary tools for persistence modules that are not 1-parameter. Such generalizations of persistence diagrams may exist in higher dimensional relatives of the well-studied $\mathbb{R}^2_\leq$. We define the space of such persistence diagrams here.

 We say the space $X\subset \mathbb{R}^d$ is a \emph{polyhedron} if it is the finite intersection of half spaces in $\mathbb{R}^d$. If  $A \neq \emptyset$ is a finite union of sub-polyhedrons of $X$, then we say the pair $(X,A)$ is a \emph{polyhedral pair}. \\
 Let $(X,A)$ be such a polyhedral pair in $\mathbb{R}^d$. Let $D_+(X) =\{\sum\limits_{i=0}^{N\leq \infty}  x_i \ | x_i \in X \ \text{ and } \ \sum\limits_{i=0}^N d(x_i,A) < \infty\}$ denote the commutative monoid of (possibly infinite) formal sums of points in $X$, having finite cumulative distance to $A$.  \\

We will additionally denote the Grothendieck completion of $D_+(X)$ as $$D (X) = \{\sum\limits_{i=0}^{N\leq \infty} \pm x_i \ | \ \sum\limits_{i=0}^N d(x_i, A)< \infty\}$$ This is the abelian group of all signed formal sums of points in $X$, having finite cumulative distance to $A$. 

The \emph{space of persistence diagrams on $(X,A)$} is denoted $D(X,A)$ and is the abelian group of equivalence classes in $D(X)/D(A)$. We will denote elements of this space as $\alpha = \sum\limits_{i=0}^{N \leq \infty} \pm x_i$, when we mean an equivalence class $\alpha$ with representative $\sum\limits_{i=0}^{N \leq \infty} \pm x_i$. We will denote the equivalence class of persistence diagrams in $A$ simply by $A$. That is, $\sum\limits_{i=0}^\infty  \pm x_i = \left( \sum\limits_{i=0}^\infty  \pm x_i\right) +A$ . In some situations, it is useful to separate the positive terms of a persistence diagram $\alpha$ from its negative terms. In such context, we will denote this separation by $\alpha = (\alpha^+, \alpha^-)$.\\

 A persistence diagram $\alpha =(\alpha^+, \alpha^-)$ may be  viewed as a signed measure $\alpha = \sum\limits_{x \in \alpha^+} \delta_x - \sum\limits_{x \in \alpha^-} \delta_x$, where $\delta_{x}$ is the Dirac measure at $x$. We note that persistent diagrams on the polyhedral pair $(X,A)$  are a special case of the setting defined by Che et. al.  \cite{Che2024} and Bubenik and Elchesen \cite{PeterAlex}.  \\

\begin{example}

  A persistence module $M$ is a functor from a poset category $(P, \leq)$ into the category of vector spaces over a chosen field $\mathbb{F}$. For interval $I \subset P$, we may define the \emph{interval module} $\mathbb{F}_I: P \rightarrow vec_\mathbb{F}$ such that $\mathbb{F}_I(t) =\mathbb{F}$ iff $t \in I$, and $\mathbb{F}_I(t)=0$ if $t \notin I$.  Crawley and Boevey \cite{Crawley} have shown that in the case that $P$ is totally ordered,  any pointwise finite dimensional, persistence module $M$ on $P$ is decomposable into interval modules and that this decomposition is unique up to isomorphism. $M \simeq \bigoplus\limits_{\lambda} \mathbb{F}_{I_\lambda}$ for some collection of intervals $\{I_\lambda\}_{\lambda \in \Lambda}$. The collection $\{I_\lambda\}_{\lambda \in \Lambda}$ is called the \emph{persistence barcode} of $M$. 
  
     The rank invariant of a 1-parameter persistence module  $M$ is a function $Rk(M): \mathbb{R}^{2}_\leq \rightarrow \mathbb{Z}_+$ assigning each pair $s \leq t \in \mathbb{R}$ the rank of the map $M(s \leq t)$. Carlsson and  Zomorodian \cite{Carlsson2009} have shown that the rank invariant on 1-parameter persistence modules is complete, i.e. it determines the isomorphism class of a persistence module. The rank invariant is equivalent to the barcode through M\"obius inversion. More specifically, the rank $Rk(M)(s,t)$ is equal to the number of intervals in the barcode of $M$ containing both $s$ and $t$.  \\

 For an interval $\mathbb{F}_{I_\lambda}$ module on interval  $I_\lambda$ with $\inf (I_\lambda)=a_\lambda$ and $\sup (I_\lambda)=b_\lambda$, the M\"obius inversion of the Rank function on module $M_{I_\lambda}$ may be encoded as the function $PD_{I_\lambda} : \mathbb{R}^2_\leq \rightarrow \mathbb{Z}_+$, which is the indicator function on the point $(a_\lambda,b_\lambda) \in \mathbb{R}^2_\leq$. The \emph{persistence diagram of $M$} is the function $PD_M: \mathbb{R}^2_\leq \rightarrow \mathbb{Z}_+$, such that $PD_M:= \sum\limits_{\lambda \in \Lambda} PD_{I_\lambda}$. \\

 We may plot the point $(a_\lambda, b_\lambda)$ in $\mathbb{R}^2_\leq$ for any interval module $\mathbb{F}_{I_\lambda}$, and thus plot the persistence diagram in the space $\mathbb{R}^2_\leq $ . Persistence diagrams inherit a commutative monoid structure from the commutative monoid structure of p.f.d. persistence modules. In this setting, these  persistence diagrams make up the space $D_+(\mathbb{R}^2_\leq)$ as we've defined it above. Furthermore,  intervals $I_\lambda$ such that $a_\lambda = b_\lambda$ are considered to be ephemeral, and often disregarded. Equivalence classes of persistence diagrams under this relation make up the quotient monoid, $D_+(\mathbb{R}^2_\leq, \Delta)$.
\end{example}

\begin{example}\label{example:Signed Barcodes}

 One would hope that there is some equivalent theory of 2-parameter persistence modules as to that of 1-parameter. In particular, it would be nice if there was decomposability of 2-parameter persistence modules into rectangle modules. Alas, this is not the case. In fact, there are 2-parameter persistence modules that do not even decompose into interval modules, and the rank invariant is not complete on 2-parameter persistence mdoules. (For further results on subclasses of multi-parameter modules on which the rank invariant is complete, see Botnan et. al. \cite{Botnan2022}. )
    
    We thus look to alternative theories to study multiparameter persistence modules, particularly those on which the rank invariant is not complete. Kim and M\'emoli \cite{KimMemoli} define the \emph{generalized rank invariant} on multi-parameter persistence modules,  and show that it is complete on a subclass of interval decomposable modules, given some finiteness conditions on the indexing poset. (A greater extent of this completeness is studied in Clause, Kim and Me\'moli \cite{IncompletenessofGRI}).

    For interval $I \subset P$, and persistence module $M:P \rightarrow vec$, they define $RK_I(M) = Rk\left( \lim\limits_{\leftarrow} M|_I \rightarrow \lim\limits_{\rightarrow} M|_I\right)$. 

For a collection $\mathcal{I}$ of intervals in $P$, and persistence module $M$, the \emph{generalized rank invariant of $M$ (relative to $\mathcal{I}$)} is the map $RK_\mathcal{I}: \mathcal{I} \rightarrow \mathbb{Z}_+ $ mapping $I \mapsto RK_I(M)$. \\

When a module is interval decomposable, we refer to the set of intervals defining its decomposition as the \emph{barcode} of the module. Kim and M\'emoli \cite{KimMemoli} show that whenever $M$ is interval decomposable on the specified subclass $\mathcal{I}$ of intervals, the multiplicities of intervals in the barcode of $M$ are given by M\"obius inversion of the generalized rank invariant . This method can be applied to the generalized rank invariant of a persistence module $M$ which is not interval decomposable, but the multiplicities emerging from M\"obius inversion in this case may be negative.  \\

Botnan, Oppermann, and Oudot, \cite{signedbarcodes}, use the generalized rank invariant in such a setting to define \emph{signed barcodes}. They consider the case of the generalized rank invariant over a set of intervals $\mathcal{I}$ that is either the set of half-open rectangles in $\mathbb{R}^d$, or hook modules in $\mathbb{R}^d$. They show that for finitely presented, pointwise-finite-dimensional persistence modules on $\mathbb{R}^d$,  there exists a unique pair of disjoint sets (with multiplicity) $R,S$ of intervals in $\mathcal{I}$, such that the generalized rank invariant is decomposed through $(R,S)$; that is $$Rk_\mathcal{I} (M) = Rk_\mathcal{I}\left(\bigoplus\limits_{I\in R} M_{I}\right) - Rk_\mathcal{I}\left(\bigoplus\limits_{I \in S} M_{I}\right)$$

We represent signed barcodes of this form as signed persistence diagrams on a polyhedral pair. For $d-$parameter persistence module $M$, let $\mathbb{R}^{2d}_\leq = \{x \in \mathbb{R}^{2d} \ | \ x_i \leq x_{i+1} \ \forall i \leq d\}$. If $[a,b)= \{x | \ a \leq x < b\}$ is a rectangle in $\mathbb{R}^d$, we encode the M\"obius inversion of $Rk(M_{I[a,b)})$ as the function $PD_{[a,b)} : \mathbb{R}_\leq^{2d}\rightarrow \mathbb{Z}_+$ which is the indicator function on the point $(a,b) \in \mathbb{R}_\leq^{2d}$, where $(a,b) := (a_1, a_2,... a_d, b_1, ...b_d)$. Building on this, the M\"obius inversion of $Rk_\mathcal{I}(M)$ is encoded as the function $PD_M: \mathbb{R}^{2d}_\leq \rightarrow \mathbb{Z}$, which is the sum of signed indicator functions on points $(a,b)$, with positive sign if $[a,b) \in R$ and negative if $[a,b) \in S$. 

Similarly, if one prefers to work with hook modules for some applications, we may also define a signed persistence diagram of $M$ through a pair of sets of hook modules $R,S$. Here, the hook $[a,b[:= \{x \ | \ a \leq x \ \text{ and } x \ngeq b\}$ is mapped to the point $(a,b) \in \mathbb{R}^{2d}_\leq$. 



  In either construction, persistence modules are represented as formal sums of signed points in  $\mathbb{R}^{2d}_\leq $, which is the intersection of half-spaces in $\mathbb{R}^{2d}$ defined by $x_i \leq x_{i+1}$ for $i \leq d$.  Consider the subset of $\mathbb{R}^{2d}_\leq$ consisting of the  bounding hyperplanes of the form $x_i=x_{i+d}$. Points on these hyperplanes represent "flat" rectangles in the signed barcode, and we may consider these features to be ephemeral. We will denote the space containing these points as $\Delta^{d}$. Then equivalence classes of these signed diagrams, modulo ephemeral diagrams, make up the space $D(\mathbb{R}^{2d}_\leq, \Delta^{d})$. 
\end{example}

\begin{example}[Mixup Barcodes]\label{example:mixup}
    From a different perspective, Wagner et. al. \cite{mixup} study the mixup of a pair of classes of data. In so doing, they compare two 1-parameter persistence modules, $M_R$ generated by a data class $R$, and  $M_{R \cup B}$ generated by the union of the two data classes $R\cup B$. The inclusion $R \hookrightarrow R \cup B$ induces a map from $M_R \rightarrow M_{R\cup B}$. They encode information from these persistence modules and the map between them as triples $(b, d',d) \ $ where $ \ b \leq d' \leq d \in \mathbb{R}$. Here $b$ is the birth time of a feature, $d'$ is its death in $M_{R \cup B}$, and $d$ is its death in $M_R$. The ``mixup'' of a bar is defined as $d-d'$, and large mixup is considered an indicator that data points in the class $B$ are in a sense, surrounded by points in class $R$.

    For example, consider two classes of data as illustrated in Figure \ref{fig: mixup data}. One class $R$ is of points arranged in a circle or radius $1$ in $\mathbb{R}^2$, and a class $B$ a single points in the center of the circle (see Figure \ref{fig: mixup data}). When considering the inclusion $R \hookrightarrow R \cup B$, we may determine the vietoris-rips filtration of both $R$ and $R \cup B$. The persistence barcode of module $H_1(VR(R))$ has one significant feature, born at approximately time $0.2$, dying at $\sqrt{3}$. Now considering the map induced by inclusion $H_1(VR(R)) \rightarrow H_1(VR(R \cup B))$, the bar $[0.2, \sqrt{3}]$ is paired with the bar $[0.2, 1]$. Thus, the corresponding 3-bar of the mixup barcode is $[0.2, 1, \sqrt{3}]$, where the mixup of this feature is $\sqrt{3}-1$.

The triples that define the mixup barcode can be viewed as points in a subspace of 3 dimensional Euclidean space, $\mathbb{R}^3_{\leq, \leq}=\{(x,y,z) \ | \ x \leq y \leq z\}$. Again $\mathbb{R}^3_\leq$ is equivalently defined as the intersection of the half-spaces $x \leq y $ and $y \leq z$, and is thus a polyhedron. One may consider points with no mixup to be ephemeral. These are points that exist on the plane $y=z$, which we will denote $\Delta^M$.  Thus the mixup barcodes make up the space $D_+(\mathbb{R}^3_{\leq, \leq}, \Delta^M)$.

\begin{figure}[H]
    \centering
    \includegraphics[width=0.5\linewidth]{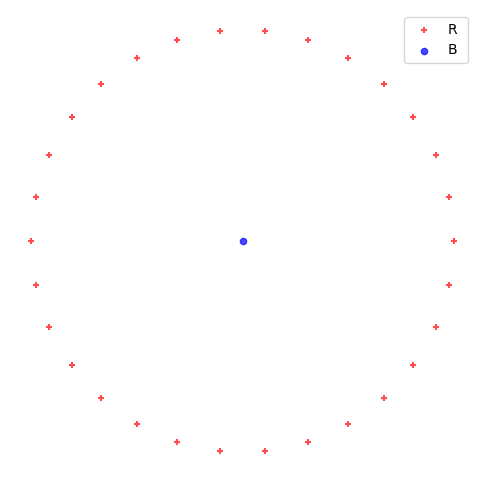}\includegraphics[width=0.5\linewidth]{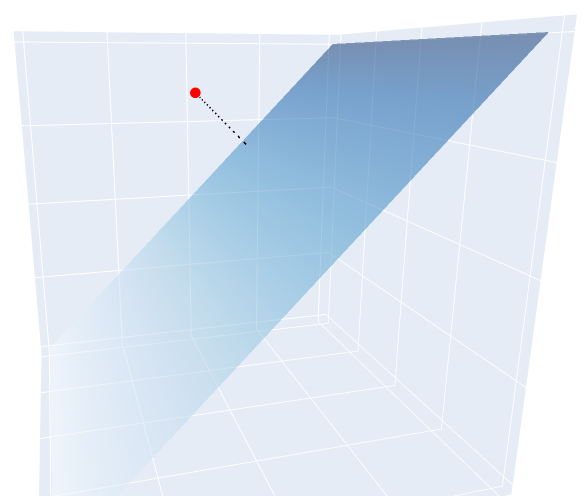}
    \caption{\textbf{Left:Two classes of data, compared with a mixup barcode}: \textbf{Right} Mixup Diagram of this pair of data. The dotted line represents the ``mixup'' of the feature}
    \label{fig: mixup data}
\end{figure}

\end{example}

For polyhedral pair $(X,A)$, we will endow the space $D(X,A)$  with the 1-Wasserstein distance, a metric induced by a metric on $\mathbb{R}^d$, such as Eclidean or $\ell_1$ distance.

 We start by assuming $\alpha,\beta \in D_+(X,A)$. That is, all terms of $\alpha$ and $\beta$ have positive sign. \\

Choose representatives of the equivalence classes $\alpha$ and $\beta$ having infinitely many terms equal to $A$; that is, $\alpha = \sum\limits_{i=0}^\infty x_i $ , $\beta =  \sum\limits_{i=0}^\infty y_i $ such that infinitely many $x_i$ and infinitely many $y_i $ equal $A$. \\

Let $\sigma : \mathbb{N} \rightarrow \mathbb{N}$ be a bijection. The \emph{Cost} of $\sigma$ relative to $\alpha$ and $\beta$ is 

$$Cost(\sigma) := \sum\limits_{i=0}^\infty d(x_i, y_{\sigma(i)})$$

We refer to $\sigma$ as a \emph{partial matching} of $\alpha$ and $\beta$. Note that $d(x_i, y_{\sigma(i)}) =0$ if $x_i = y_{\sigma(i)} = A$\\

Note that this is in fact always finite by the triangle inequality. 
\begin{equation*}
    \begin{split}
        \sum\limits_{i=0}^\infty d(x_i, y_{\sigma(i)})&\leq \sum\limits_{i=0}^\infty d(x_i,A) + d(y_{\sigma(i)}, A)\\
        &= \sum\limits_{i=0}^\infty d(x_i,A) +  \sum\limits_{i=0}^\infty d(y_{\sigma(i)},A)
    \end{split}
\end{equation*}

 The \emph{1-Wasserstein Distance} between persistence diagrams $\alpha$ and $\beta$ is the infemum of costs of all partial matchings $\sigma$. 

$$W_1(\alpha, \beta):= \inf\limits_{\sigma} Cost(\sigma)$$

Now consider the case that $\alpha, \beta$ do not consist solely of positive terms. Then $\alpha = (\alpha^+, \alpha^-)$ and $\beta = (\beta^+, \beta^-)$.Then define

$$W_1(\alpha, \beta) := W_1(\alpha^+ +\beta^-, \beta^+ + \alpha^-)$$

This makes $D(X,A)$ into a metric space, and in fact, a normed vector space, where $||\alpha||= W_1(\alpha^+, \alpha^-)$. This norm is equal to the Kantorvich-Rubinstein norm. 

 \subsection{Piece-wise Linear Triangulations}

If $T$ is a simplicial complex, and $S\subset T$ is a subcomplex, we refer to the pair $(T,S)$ as a \emph{simplicial pair}.

Let $T$ be a simplicial complex and $f:|T| \rightarrow W$ be a map from the geometric realization of $T$ into the $\mathbb{R}$ vector space $W$.   We say $f$ is \emph{piece-wise linear (p.l.)} on $T$ if $f$ is linear with respect to Barycentric coordinates of $T$. That is,  $\forall \sigma =\langle v_0, ...v_p\rangle  \in T$ and $x \in |\sigma| $ with Barycentric coordinates $x= \sum\limits_{i=0}^p a_i \cdot v_i$, the value of $f$ at $x$ is the Barycentric average of $f$ on vertices of $\sigma$; 

    $$f(x) = \sum\limits_{i=0}^p a_i \cdot f(v_i)$$

    For polyhedral pair $(X,A)$, a \emph{piece-wise linear triangulation of $(X,A)$} is a simplicial pair $(T,S)$ along with a homeomorphism of pairs $h:(|T|, |S|) \rightarrow (X,A)$, which is p.l. on $T$. \\

\begin{example}
    Let $D$ be a set of points uniformly sampled from the unit disk in $\mathbb{R}^3$, and specify a singular point at $0$. Let $T$ denote the Delaunay triangulation of $D$. Let $h: |T|\rightarrow \mathbb{R}^3$  be the p.l. extension of the identity map on points. Let $X$ be the image of $h$. Then $h$, when thought of as a map of pairs $h:(|T|,0)\rightarrow (X,\{0\})$ is a p.l. triangulation. (Figure \ref{Triangulated Sphere}).

\end{example}\begin{figure}[H]
        \centering
        \includegraphics[width=0.4\linewidth]{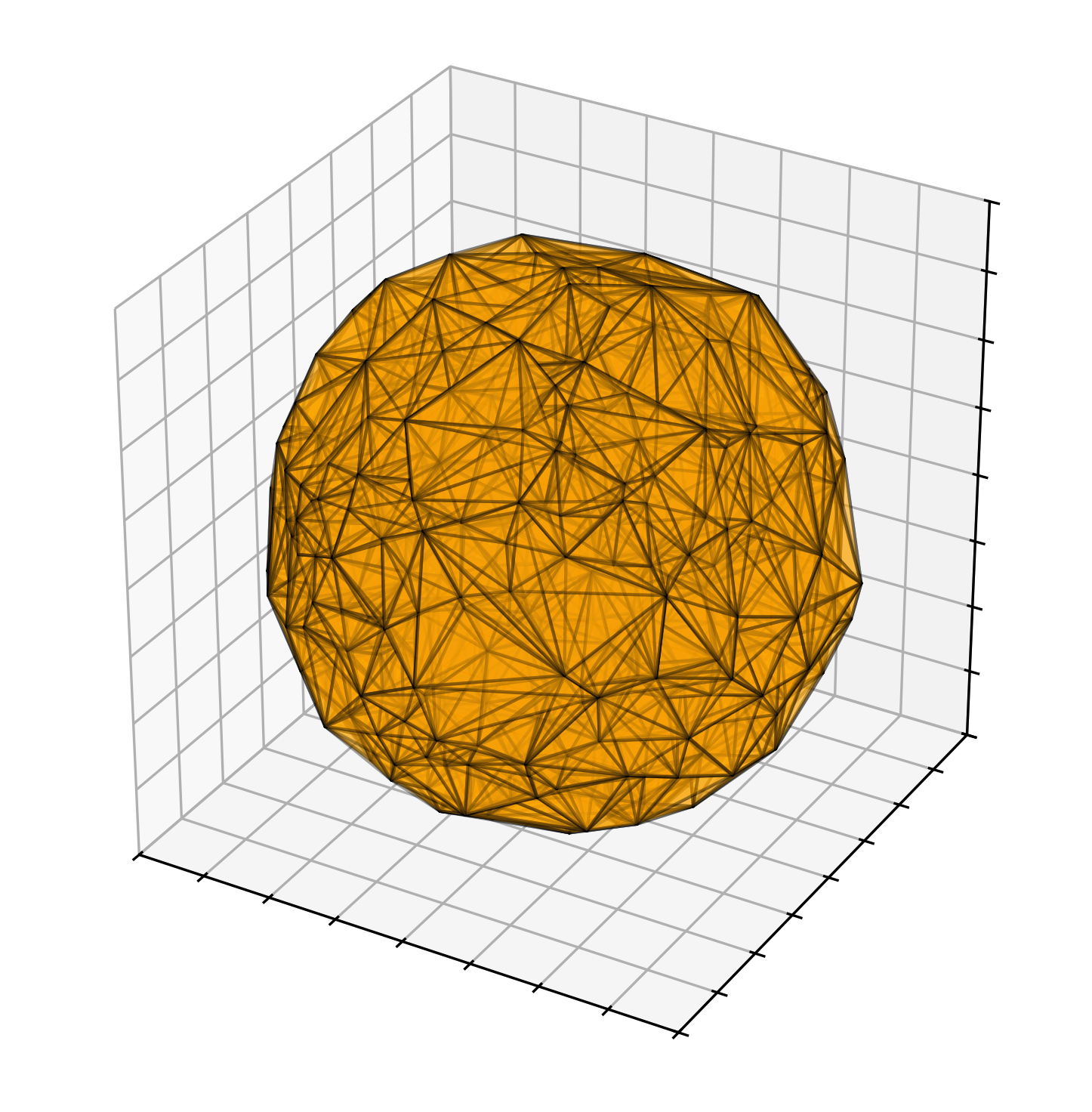}
        \caption{A triangulated approximation of a sphere, $X$, with closed subset $A $ consisting of a single point at the center}
        \label{Triangulated Sphere}
    \end{figure}

 For the remainder of this paper, we shall abuse notation to suppress the homeomorphism $h$. That is, when $h: (|T|,|S|) \rightarrow (X,A)$ is a p.l. homeomorphism making $(T,S)$ a p.l. triangulation of $(X,A)$, we will not explicitly refer to $h$, and instead simply say, ``$(T,S)$ is a triangulation of $(X,A)$''. Furthermore, for $\sigma \in T$ we will  simply say `` $x \in \sigma$'' instead of ``$x \in h(|\sigma|)$''.

\begin{remark}\label{Remark: finite simplices}
    P.L. triangulations on polyhedral pairs have the useful property that the triangulation is in a sense, locally finite.  That is, for $T$ a triangulation of $X$ and $C \subset X$ compact,there exists at most finitely many simplices of $T$ which intersect $C$. By extension, since $X$ is locally compact, for any point $x \in X$, there exists finitely many simplices of $T$ which contain $X$. 


\end{remark}

Most of our results will apply to sequences of triangulations on an arbitrary polyhedral pair.  However, we will later optimize some of our results for sequences of a specific kind of triangulation, which we define now. 

   \begin{definition}
       
 Let $\Upsilon \subset \{ (i,j) | \ 1 \leq i < j \leq d\}$ be a set of relations on indices of $\mathbb{R}^d$. Let $X \subset \mathbb{R}^d$ be the subset of $\mathbb{R}^d$ such that $X = \{x \ | \ x_i \leq x_j \ \forall (i,j) \in \Upsilon\}$.  Furthermore, let  $\Upsilon ' \subset \Upsilon$ be a nonempty subset of relations. Let $A = \{x \in \mathbb{R}^d \ | \ \exists (i,j) \in \Upsilon ' \ s.t.  \ x_i = x_j \}$. The \emph{Coxeter-Freudenthal-Kuhn (CFK) triangulation at scale 1} on $(X,A)$  is the triangulation $(T,S)$ formed by the intersection of all hyperplanes $x_i = x_j + \mathbb{Z}$ for all $i \leq j \leq d$, along with all hyperplanes $x_i =\mathbb{Z}\ ; \forall i \leq d$. 

    Alternatively, the \emph{CFK-Triangulation at scale $ 1$} is the triangulation of simplices of the following form. Let $v_0 \in \cdot  \mathbb{Z}^d \cap X$, and let $\pi \in S_d$ be a permutation on $d$. Let $1 \leq p \leq d$, and let $ v_i: = v_0 + \sum\limits_{j=1}^i \cdot e_{\pi(j)}$ . Provided $v_i \in X $ for all $i$, then $\sigma = \langle v_0, ...v_p\rangle $ is a simplex of $T$.  
    
    For $c>0$, the \emph{CFK-Triangulation at scale $ c$} is the equivalent triangulation scaled by $c$, formed by hyperplanes $x_i = x_j + c\cdot \mathbb{Z}_+$ for all $i \leq j \leq d$, along with all hyperplanes $x_i = c \cdot \mathbb{Z}$. Alternatively, the \emph{CFK-Triangulation at scale $ c$} is the triangulation of simplices of the following form. Let $v_0 \in c\cdot  \mathbb{Z}^d \cap X$, and let $\pi \in S_d$. Let $1 \leq p \leq d$, and let $ v_i: = v_0 + \sum\limits_{j=1}^i c\cdot e_{\pi(j)}$ . Provided $v_i \in X $ for all $i$, then $\sigma = \langle v_0, ...v_p\rangle $ is a simplex of $T$.  

   \end{definition}
We note a useful property of a CFK-triangulaion at scale $1$. Namely,  each $d$-simplex of $T$ is congruent to the standard $d$-simplex, $\sigma = \langle v_0, v_1, ...v_d\rangle $, where $v_0 =0$ and $v_p = \sum\limits_{i=1}^p e_i$ for $p >0$. Similarly, the simplices of the CFK triangulation at scale $c$ are congruent to the standard $d-$simplex scaled by $c$. The construction and properties of CFK triangulations are explored further by Boissonnat et. al.  \cite{CFK}

\begin{example}\label{example:CFK triangulation} Let $ \mathbb{R}^2_\leq = \{(x,y) \in \mathbb{R}^2 \ | \ x \leq y\}$ be the space in which 1-parameter persistence diagrams are represented. Furthermore, let  $\Delta:= \{(x,x)\ | \ x \in \mathbb{R}\}$ be the set containing points derived from instantaneously lived features in a persistence module. We consider these points to be \emph{ephemeral}. A CFK triangulation of polyhedral pair $(\mathbb{R}^2_\leq, \Delta)$ is formed by lines of the form $y=\mathbb{Z}$, $x = \mathbb{Z}$, and $y=x+\mathbb{Z}$. 

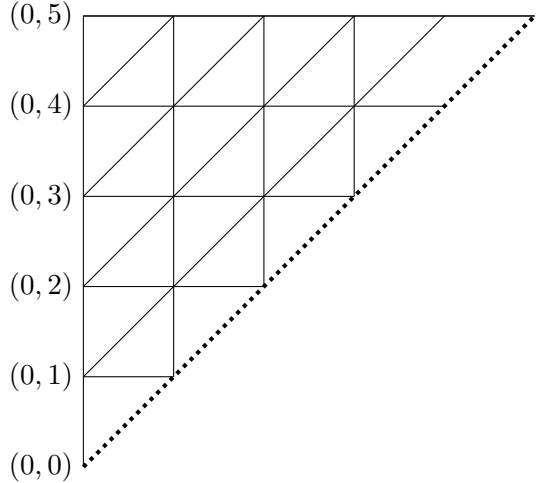
\begin{figure}[H]\label{fig: CFK triangulations}

\begin{tikzpicture}[scale=1.2]
            \draw[dotted, line width=0.55mm] (0,0)--(5,5);
            \draw (5,5)--(0,5)--(0,0);
            \draw (0,2)--(2,2);
            \draw (0,1)--(1,1);
            \draw (0,3)--(3,3);
            \draw (0,4)--(4,4);
            \draw (0,5) --(5,5);
            \draw (1,1)--(1,5);
            \draw (2,2)--(2,5);
            \draw (3,3)--(3,5);
            \draw (0,1) -- (4,5);
            \draw (0,2) -- (3,5);
            \draw (0,3) -- (2,5);
            \draw (0,4) -- (1,5);
            \node [black, left] at (0,0){$(0,0)$};
            \node [black, left] at (0,1){$(0,1)$};
            \node [black, left] at (0,2){$(0,2)$};
            \node [black, left] at (0,3){$(0,3)$};
            \node [black, left] at (0,4){$(0,4)$};
            \node [black, left] at (0,5){$(0,5)$};
\end{tikzpicture}

\caption{ The Coxeter Freudenthal Kuhn triangulation at scale 1 on polyhedral pair $(\mathbb{R}^2_\leq, \Delta)$}
\end{figure}

\end{example}

\begin{example}\label{example: CFK triangulation R3}
    Leaning in the direction of mhigher-dimensional persistence, let us consider mixup barcodes \cite{mixup}, and the polyhedral pair they occupy, $(\mathbb{R}^3_{\leq, \leq}, \Delta^M)$. The CFK triangulation at scale 1 of polyhedral pair $(\mathbb{R}^3_{\leq, \leq}, \Delta^M)$ is formed by the hyperplanes of the form $x_i = \mathbb{Z}$ for $i\in \{1,2,3\}$, as well as hyperplanes of the form $x_2=x_1 + \mathbb{Z}$, $x_3 = x_1+\mathbb{Z}$, and $x_3 = x_2 + \mathbb{Z}$.

    \begin{figure}[H]
        \centering
        \includegraphics[width=0.7\linewidth]{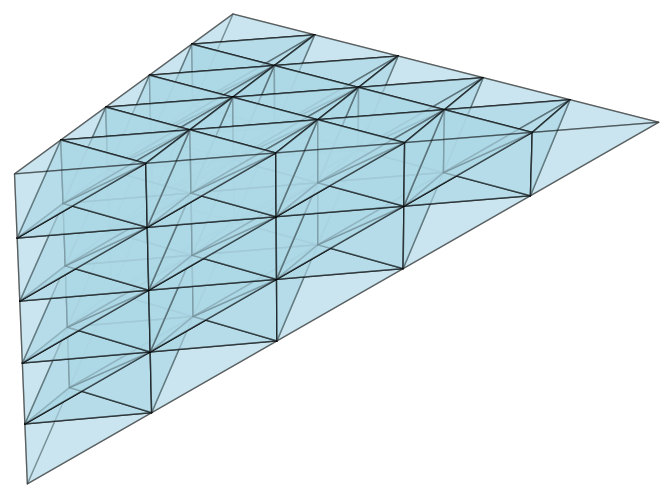}
        \caption{ A CFK triangulation of polyhedral pair $(\mathbb{R}^3_{\leq, \leq}, \Delta^M)$}
        \label{mixup CFK triangulation}
    \end{figure}
\end{example}

\subsection{Schauder Basis}
The key tool of this project is the notion of a Schauder Basis on a vector space. This is a looser notion than that of a more traditional Hamel basis, and requires some additional structure to be supposed on the vector space itself. 

   Let $(W, \Omega)$ be a topological vector space over the field $\mathbb{F}$. A \emph{Schauder basis} of $W$ is a countable subset  $\B=\{e_i\}_{i=0}^\infty \subset W$, such that for every $w\in W$, there exists a unique sequence $\{a_i\}_{i=0}^\infty $ of scalars in $\mathbb{F}$ such that 

    $$\sum\limits_{i=0}^\infty a_i \cdot e_i =v$$
    where convergence is taken to mean convergence of the partial sums in the topology $\Omega$. \\

\begin{example}
    Consider the $\mathbb{R}$ vector space $\ell^p$ for $p < \infty$. Let $\B$ be the set of standard unit vectors $e_i$ for $i \geq 0$. We illustrate below that $\B$ is a Schauder basis of $\ell^p$. 

    Let $a= (a_i)_{i=0}^\infty$ be a sequence in $\ell^p$. Let $s^n:=\sum\limits_{i=0}^n a_i \cdot e_i $.

    Note that $||a-s^n||_p^p= \sum\limits_{i=n+1}^\infty a_i^p$. But since $a \in \ell^p$, we know that $\sum\limits_{i=0}^\infty a_i^p < \infty$, thus $||a-s^n||_p^p \rightarrow 0$ and hence $s^n \rightarrow a$. Furthermore, the coefficients $a_i$ of $e_i$ are unique in this summation. Hence $\B$ is indeed a Schauder Basis of $\ell^p$. 
    
\end{example}

\subsection{Relative Radon Measures}

In this section, we will define a more general setting than that of signed persistence diagrams on a polyhedral pair. We will make use of this material in section 6 for extending results of section 5.

 Let $X\subset \mathbb{R}^d$ be a polyhedron. For functional $f:X \rightarrow \mathbb{R}$, and (signed or unsigned) measure $\alpha $ on $X$, we use notation $\alpha(f):= \int_{X} f \ d \alpha$. For a signed measure $\alpha$, we denote $\alpha$ by it's positive and negative components as $\alpha = (\alpha^+ , \alpha^-)$.

Let $\mathcal{B}^+(X) $ be the commutative monoid of unsigned Borel measures on $X$. If $A \subset X$, let $\mathcal{B}^+(X,A) := \mathcal{B}^+(X)/ \mathcal{B}^+(A)$. 

We now define a sub-monoid of these relative Borel measures. For measures $\alpha$ and Borel set $U$, let $\alpha_U$ denote the measure defined by $\alpha_U(E) = \alpha(E \cap U)$ for all Borel sets $E \subset X$. Let $d_A: X \rightarrow \mathbb{R}_{\geq 0} $ mapping $x \mapsto d(x,A)$. Then define the sub-monoid 

$$\hat{\mathcal{M}}^+_1(X,A) = \{\alpha \in \mathcal{B}^+(X,A) \ | \ \alpha \text{ is tight }\ \text{ and } \forall x \in X , \exists \text{ neighborhood } U \text{ with } \alpha_U(d_A)< \infty\}$$


Let $\hat{\mathcal{M}}_1(X,A)$ be the Grothendieck completion of $\hat{\mathcal{M}}_1^+(X,A)$. Note the persistence diagrams on polyhedral pair $(X,A)$ , $D(X,A)$, make up a subgroup of $\hat{\mathcal{M}}_1(X,A)$.

There is a generalization of the 1-Wasserstein distance to $\hat{\mathcal{M}}_1(X,A)$. To define this,  let $p_1, p_2: X^2 \rightarrow X$ be projections onto the first and second components respectively. Let $\alpha, \beta \in \hat{\mathcal{M}}^+_1(X,A)$  and  $\pi \in \mathcal{B}^+(X^2, A^2)$. Then $\pi$ is a \emph{coupling} of $\alpha$ and $\beta$ iff $(p_1)_*(\pi)=\alpha$ and $(p_2)_*(\pi)=\beta$. Let $\Pi(\alpha,\beta)$ denote the set of all such couplings of $\alpha$ and $\beta$.

An example of a coupling is the partial matchings of persistence diagrams used in defining the Wasserstein distance on $D_+(X,A)$.

    Let $\bar{d}= d \wedge (d_A \oplus d_A): X^2 \rightarrow \mathbb{R}_{\geq 0}$ be the minimum of the distance between two points, and the sum of the distances of points to $A$.

    We may define the (relative) 1-Wasserstein distance between relative Radon measures $\alpha, \beta$ as 

    $$W_1(\alpha, \beta) = \inf\limits_{\pi \in \Pi(\alpha, \beta)} \pi(\bar{d})$$

    We may extend this to signed persistence measures $\hat{\mathcal{M}}_1(X,A)$ where $\alpha$ and $\beta$ decompose into positive and negative components by the following.

    $$W_1(\alpha, \beta) := W_1(\alpha^+ + \beta^-, \beta^+ + \alpha^-)$$

For some proofs, we will make use of the following lemma regarding couplings. Let $Lip(X,A)$ denote the group of Lipschitz functionals on $X$ which are $0$ on $A$.  

\begin{lemma}\cite{PeterAlex}(Lemma 6.1d)\label{lem: couplings}
    If $\pi$ is a coupling of $\alpha, \beta \in \hat{\mathcal{M}}_1^+(X,A)$, and $f,g \in Lip(X,A)$, then $\pi(f \oplus g) = \alpha(f) + \beta(g)$.
\end{lemma}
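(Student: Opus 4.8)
The plan is to reduce the identity to the change-of-variables formula for pushforward measures, applied to each of the two coordinates of $X^2$ separately. Here $f \oplus g$ denotes the function $(x,y) \mapsto f(x) + g(y)$ on $X^2$, exactly as with $d_A \oplus d_A$ above. First I would pass to honest Borel representatives: choose a Borel measure $\pi$ on $X^2$ representing the coupling, and take $(p_1)_*\pi$ and $(p_2)_*\pi$ as the chosen representatives of $\alpha$ and $\beta$ (these are legitimate, since $(p_i)_*$ descends to the quotients $\mathcal{B}^+(X^2,A^2) \to \mathcal{B}^+(X,A)$: the $p_1$-pushforward of a measure carried by $A^2 = A \times A$ is carried by $A$). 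Because $f$ and $g$ vanish on $A$ — so that $f \oplus g$ vanishes on $A^2$ — the three quantities $\alpha(f)$, $\beta(g)$, $\pi(f\oplus g)$ do not depend on the choice of representative (a measure carried by $A$, resp.\ $A^2$, integrates these functions to $0$). Hence it suffices to prove the identity for these fixed representatives.

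With representatives fixed, the core computation is
\begin{equation*}
\pi(f\oplus g) = \int_{X^2} \bigl(f\circ p_1 + g\circ p_2\bigr)\, d\pi = \int_{X^2} (f\circ p_1)\, d\pi + \int_{X^2} (g\circ p_2)\, d\pi = \int_X f\, d\alpha + \int_X g\, d\beta ,
\end{equation*}
where the middle equality is additivity of the integral and the last equality is the change-of-variables formula $\int_{X^2} (h\circ p_i)\, d\pi = \int_X h\, d\bigl((p_i)_*\pi\bigr)$. To license the additivity step I would record the Lipschitz bound $|f(x)| \le \|f\|_{\mathrm{Lip}}\, d_A(x)$, obtained by comparing $f(x)$ with $f(a) = 0$ for $a \in A$ and taking the infimum over $a \in A$, together with the analogue for $g$; these give $|f \oplus g| \le \|f\|_{\mathrm{Lip}}(d_A \circ p_1) + \|g\|_{\mathrm{Lip}}(d_A\circ p_2)$, so that $f \circ p_1$ and $g \circ p_2$ split the integral of $f \oplus g$ cleanly. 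For nonnegative $f,g$ everything above holds verbatim in $[0,\infty]$ by monotone convergence; for general Lipschitz $f,g$ one decomposes into positive and negative parts.

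The step requiring genuine care — and essentially where the structure of $\hat{\mathcal{M}}_1^+(X,A)$ enters, as opposed to $\pi$ being an arbitrary Borel measure — is ensuring that this decomposition into positive and negative parts does not produce an indeterminate $\infty - \infty$, i.e.\ that $f\circ p_1$ and $g \circ p_2$ are $\pi$-integrable (or at least have integrals that are well-defined in $[-\infty,\infty]$ and add correctly). I expect this to be the main obstacle, and I would handle it using the tightness and local $d_A$-finiteness built into the definition of $\hat{\mathcal{M}}_1^+(X,A)$ — which, transported along the coupling $\pi$, control $\int_{X^2} (d_A \circ p_i)\, d\pi$ in terms of $\alpha(d_A)$ and $\beta(d_A)$ — after which additivity and change of variables close the argument. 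The remaining ingredients (measurability of $f \circ p_i$, the pushforward formula itself) are routine measure theory.
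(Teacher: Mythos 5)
The paper itself offers no proof of this lemma; it is imported verbatim from \cite{PeterAlex} (Lemma~6.1d), so there is no internal argument to compare against. Judged on its own, your proposal is the standard proof and is essentially correct: pass to Borel representatives, note that $(p_i)_*$ descends to the quotients and that $f\oplus g$ vanishes on $A^2$ so all three integrals are independent of the representatives chosen, then split the integral and apply the change-of-variables formula for pushforwards in each coordinate. The reduction to representatives and the bound $|f(x)|\le Lip(f)\, d_A(x)$ are both right, and for nonnegative $f,g$ your argument is complete and unconditional, with both sides read in $[0,\infty]$.

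The one step you flag as the ``main obstacle'' is indeed the only real issue, but your proposed fix does not close it. The pushforward formula already gives $\pi(d_A\circ p_1)=\alpha(d_A)$ exactly, so ``transporting tightness and local $d_A$-finiteness along $\pi$'' buys nothing beyond that identity; and the definition of $\hat{\mathcal{M}}_1^+(X,A)$ used in this paper imposes only \emph{local} finiteness of $\alpha(d_A)$, which does not imply $\alpha(d_A)<\infty$ (take $\alpha=\sum_n 2^{-n}\delta_{x_n}$ with $d_A(x_n)=4^n$ and the $x_n$ discrete). For such $\alpha$ and signed $f,g\in Lip(X,A)$ the right-hand side can be a genuine $\infty-\infty$ while the left-hand side is finite --- e.g.\ $f=d_A$, $g=-d_A$, $\beta=\alpha$, $\pi$ the diagonal coupling gives $\pi(f\oplus g)=0$ but $\alpha(f)=+\infty$, $\beta(g)=-\infty$ --- so no argument can establish the identity unconditionally with the definitions as stated here. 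The honest conclusion of your proof is therefore: the identity holds in $[0,\infty]$ for nonnegative $f,g$, and for general $f,g\in Lip(X,A)$ it holds whenever $\alpha(d_A),\beta(d_A)<\infty$ (equivalently, whenever the right-hand side is well defined), which is the finiteness convention under which the cited source states the result. Under either reading your argument is complete; you should simply state the caveat rather than suggest it can be removed.
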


We lastly include the theorem that inspired this project, outlining the duality of function space with persistence diagrams. Let $Lip_c(X,A)\subset Lip(X,A)$ denote the subgroup of compactly supported Lipschitz functionals.  $\hat{\mathcal{M}}_1(X,A)$, which contains signed persistence diagrams on $(X,A)$, makes up the sequentially order continuous dual of $Lip_c(X,A)$. 

\begin{theorem} \cite{PeterAlex}(Theorem 5.9)
    Let $(X,A)$ be a metric pair. Assume that $X$ is locally compact. Then $Lip_c(X,A)$ is the sequentially order continuous dual of $\hat{\mathcal{M}}_1(X,A)$. 
\end{theorem}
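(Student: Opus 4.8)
I would prove this as an identification of a dual pairing: exhibit the canonical bilinear form $\langle f,\alpha\rangle = \alpha(f) = \int_X f\,d\alpha$ between $Lip_c(X,A)$ and $\hat{\mathcal M}_1(X,A)$, check that it is well defined, separating, and sequentially order continuous in the measure variable, and then show that \emph{every} order-bounded, sequentially order continuous functional on $\hat{\mathcal M}_1(X,A)$ is evaluation against a unique element of $Lip_c(X,A)$. The engine for the second half is a Riesz--Markov--Kakutani representation adapted to the relative, $d_A$-weighted setting; this is the same pairing that underlies the companion statement recalled just above the theorem (that $\hat{\mathcal M}_1(X,A)$ is the sequentially order continuous dual of $Lip_c(X,A)$), so I would organize the argument so that both halves fall out of the single pairing.

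First I would treat the easy inclusion. Given $f\in Lip_c(X,A)$ with Lipschitz constant $L$ and support $K$, the identity $f|_A=0$ forces $|f(x)|\le L\,d_A(x)$ for all $x$; since $K$ is compact and $X$ is locally compact, covering $K$ by finitely many neighborhoods on which $\alpha$ has finite $d_A$-mass gives $\int_X|f|\,d|\alpha|<\infty$, and since $f$ vanishes on $A$ the value $\alpha(f)$ is independent of the chosen representative, so $\Lambda_f:=(\alpha\mapsto\alpha(f))$ is a well-defined linear functional on $\hat{\mathcal M}_1(X,A)$. Writing $f=f^+-f^-$ with $f^\pm\in Lip_c(X,A)$ nonnegative reduces order-boundedness and sequential order continuity of $\Lambda_f$ to the positive case, where $\Lambda_f\ge 0$ and, for $\alpha_n\downarrow 0$ in the Riesz space $\hat{\mathcal M}_1(X,A)$, tightness confines almost all the mass to a fixed compact set while the local $d_A$-finiteness dominates $f$, so the monotone/dominated convergence facts recalled in Section~2.4 give $\Lambda_f(\alpha_n)\to 0$. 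Injectivity of $f\mapsto\Lambda_f$ and the fact that $Lip_c(X,A)$ separates the points of $\hat{\mathcal M}_1(X,A)$ follow by constructing, for $x\in X\setminus A$ and small $r>0$, a compactly supported Lipschitz bump concentrated near $x$, which is available since $X$ is a locally compact polyhedron.

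For the representation half, let $\phi$ be order-bounded and sequentially order continuous. Since $\delta_x$ is tight with $d_A(x)<\infty$, we have $\delta_x\in\hat{\mathcal M}_1^+(X,A)$, and $\delta_x$ represents $0$ when $x\in A$, so $f(x):=\phi(\delta_x)$ is a well-defined function on $X$ vanishing on $A$, and $f$ is forced to be unique. To get regularity I would first show that an order-bounded, sequentially order continuous $\phi$ is continuous for the $W_1$-norm (using that the $W_1$-norm on $\hat{\mathcal M}_1(X,A)$ is order continuous), whence $|f(x)-f(y)|=|\phi(\delta_x-\delta_y)|\le\|\phi\|\,W_1(\delta_x,\delta_y)\le\|\phi\|\,d(x,y)$ and $|f(x)|\le\|\phi\|\,d_A(x)$, so $f$ is Lipschitz and vanishes on $A$ at linear rate. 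Then I would decompose $\phi=\phi^+-\phi^-$ into positive sequentially order continuous parts, represent each positive part by a Riesz--Markov--Kakutani argument as integration against a Borel function, observe that evaluation on Diracs forces that function to be $f$, and finally upgrade agreement on finitely supported measures to agreement on all of $\hat{\mathcal M}_1(X,A)$ by approximating a tight measure by its restrictions to an increasing exhaustion by compacta and invoking sequential order continuity; the compact-support condition on $f$ comes out of testing $\phi$ against families of measures supported far out in $X$.

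The main obstacle, and where I would concentrate the work, is the last step --- identifying the dual \emph{precisely} as $Lip_c(X,A)$ rather than as some larger Lipschitz space (bounded, or vanishing at infinity relative to $A$). Two soft moves are not available: the order structure is not controlled by a norm-dense family, and finitely supported (discrete) measures are not order dense among tight Radon measures, so one cannot pass from ``$\phi$ agrees with $\Lambda_f$ on Diracs'' to ``$\phi=\Lambda_f$'' by density. This forces an honest representation theorem together with careful bookkeeping of the two nonstandard features of the setting --- the quotient by measures on $A$, and the hypothesis of only \emph{local} $d_A$-finiteness --- which together are exactly what pins the regularity of $f$ down to membership in $Lip_c(X,A)$; I would lean on the corresponding argument of \cite{PeterAlex} for this delicate point.
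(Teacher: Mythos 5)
This statement is quoted verbatim from Bubenik and Elchesen \cite{PeterAlex} (their Theorem 5.9) and is not proved anywhere in the present paper; it appears in Section~2.4 purely as imported background, so there is no in-paper argument to compare your proposal against. (Note also that the sentence immediately preceding the theorem asserts the opposite pairing, namely that $\hat{\mathcal{M}}_1(X,A)$ is the sequentially order continuous dual of $Lip_c(X,A)$; the boxed statement and the surrounding prose disagree about which space is the dual of which, and your proposal addresses the direction in the boxed statement. Be aware these are genuinely different claims.)

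On its own merits, your outline of the easy half (each $f\in Lip_c(X,A)$ induces a well-defined, order-bounded, sequentially order continuous functional, via $|f|\le Lip(f)\cdot d_A$ and the local $d_A$-finiteness of measures in $\hat{\mathcal{M}}_1^+(X,A)$) is sound, and defining $f(x):=\phi(\delta_x)$ with the Lipschitz bound $|f(x)-f(y)|\le\|\phi\|\,W_1(\delta_x,\delta_y)$ is the natural opening move for the converse. But the proposal has a genuine gap exactly where you flag it: the passage from ``$\phi$ agrees with $\Lambda_f$ on Dirac measures'' to ``$\phi=\Lambda_f$ on all of $\hat{\mathcal{M}}_1(X,A)$,'' and the identification of the representing function as lying in $Lip_c(X,A)$ rather than a larger Lipschitz class, are the entire content of the theorem, and you resolve them by writing ``I would lean on the corresponding argument of \cite{PeterAlex} for this delicate point.'' Deferring the crux of the proof to the very source whose theorem you are proving makes the proposal circular as a self-contained argument. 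In addition, two intermediate claims are asserted without justification and are not obvious: that sequential order continuity plus order-boundedness implies $W_1$-norm continuity (you invoke order continuity of the $W_1$-norm, which itself needs proof in this relative, quotient setting), and that ``testing $\phi$ against families of measures supported far out in $X$'' forces compact support of $f$ --- this is plausible but is precisely the step that distinguishes $Lip_c$ from $Lip$ and needs an explicit decreasing sequence of measures witnessing the failure of sequential order continuity when $\supp(f)$ is unbounded.
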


\section{Nested Triangulations}

In this section, we define and explore properties of sequences of nested p.l. triangulations. We use these nested triangulations to construct Schauder bases of Lipschitz functionals in section 4, and we use these Schauder Bases to define vectorizations in section 5.

 Recall that we refer to a map $h:|T|\rightarrow X$ to be p.l. if $h$ is consistent with Barycentric coordinates. That is, if $x \in |\sigma|$ and $x = \sum\limits_{i=0}^p a_i \cdot v_i$ for vertices $v_i$ of $\sigma$, then $h(x) = \sum\limits_{i=0}^p a_i \cdot h(v_i)$. 

Much of this section will focus on functionals $f: X \rightarrow \mathbb{R}$, where $X$ is assumed to have a triangulation $T$. If $f:X \rightarrow \mathbb{R}$ is such a  functional on $X$, we say $f$ is p.l. on $T$ if $f \circ h$ is p.l. on $T$.

\begin{lemma}\label{lem:linear sum}
    Let $T$ be a triangulation of $X$. Let $\{f_\lambda: X \rightarrow \mathbb{R}\}_{\lambda \in \Lambda}$ be a set of p.l., functionals on $X$ such that for all $x \in X$, $f_\lambda(x) =0$ for all but finitely many $\lambda \in \Lambda$. Then $f:= \sum\limits_{\lambda \in \Lambda} f_\lambda$ is p.l. on $T$. 
\end{lemma}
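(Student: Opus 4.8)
The plan is to reduce the claim to a finite statement on each simplex of $T$. First I would record that $f$ is well-defined as a function $X \to \mathbb{R}$: by hypothesis, at every point $x \in X$ only finitely many of the $f_\lambda(x)$ are nonzero, so the sum $\sum_{\lambda \in \Lambda} f_\lambda(x)$ has only finitely many nonzero terms and defines a real number. With that in hand, it suffices, by the definition of p.l.\ on $T$, to verify that for every simplex $\sigma = \langle v_0, \ldots, v_p\rangle \in T$ and every $x \in |\sigma|$ with barycentric coordinates $x = \sum_{i=0}^p a_i v_i$, one has $f(x) = \sum_{i=0}^p a_i f(v_i)$.

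The key step is to upgrade the pointwise local-finiteness hypothesis to genuine finiteness on each simplex. Fix $\sigma = \langle v_0, \ldots, v_p \rangle$. For each vertex $v_i$, the set $S_i := \{\lambda \in \Lambda : f_\lambda(v_i) \neq 0\}$ is finite, so $S := \bigcup_{i=0}^p S_i$ is a finite subset of $\Lambda$. I claim every $f_\lambda$ with $\lambda \notin S$ vanishes identically on $|\sigma|$: indeed $f_\lambda(v_i) = 0$ for all $i$, and since $f_\lambda$ is p.l.\ on $T$, for any $x = \sum_i a_i v_i \in |\sigma|$ we get $f_\lambda(x) = \sum_i a_i f_\lambda(v_i) = 0$. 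Hence on $|\sigma|$ the sum defining $f$ collapses to the finite sum $f|_{|\sigma|} = \sum_{\lambda \in S} f_\lambda|_{|\sigma|}$.

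Finally I would check linearity in barycentric coordinates directly, using that a finite sum of p.l.\ functions is p.l. For $x = \sum_{i=0}^p a_i v_i \in |\sigma|$,
\[
f(x) = \sum_{\lambda \in S} f_\lambda(x) = \sum_{\lambda \in S} \sum_{i=0}^p a_i f_\lambda(v_i) = \sum_{i=0}^p a_i \sum_{\lambda \in S} f_\lambda(v_i) = \sum_{i=0}^p a_i f(v_i),
\]
where the interchange of the two finite sums is harmless and the last equality uses $f_\lambda(v_i) = 0$ for $\lambda \notin S$. Since $\sigma$ was arbitrary, $f$ is p.l.\ on $T$. (As elsewhere in the paper we suppress the homeomorphism $h$; the argument is identical with $f$ replaced by $f \circ h$ and $v_i$ a vertex of the abstract simplex.)

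The only real content is the middle paragraph: a p.l.\ function on a simplex is determined by its vertex values, so vanishing at every vertex forces vanishing on the whole simplex, which is exactly what converts the hypothesis "finitely many $f_\lambda$ nonzero at each \emph{point}" into "finitely many $f_\lambda$ nonzero on each \emph{simplex}." Everything after that is a routine rearrangement of finite sums, so I do not expect any serious obstacle.
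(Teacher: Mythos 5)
Your proof is correct and follows essentially the same route as the paper's: isolate a finite subset of $\Lambda$ relevant to a given simplex, then interchange two finite sums using the barycentric linearity of each $f_\lambda$. The only cosmetic difference is that you derive the finiteness on a whole simplex from the vertex values alone (via the p.l.\ property), whereas the paper works pointwise and simply throws the indices nonzero at $x$ into its finite set $\bar{\Lambda}$ directly from the hypothesis; both are fine.
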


\begin{proof}
    Let $x \in X$, and let $\sigma= \langle v_0, ...v_p\rangle \in T$ be the  simplex in $T$ of minimal degree such that $x \in \sigma$. Let the Barycentric coordinates of $x$ be given by $x=\sum\limits_{i=0}^p a_i \cdot v_i$. Since $\sigma$ is minimal, this implies that $a_i >0$ for all $i$.
    
    By assumption, there exists a finite subset of $\Lambda$ consisting of all $f_\lambda$ that are nonzero on $x$. Furthermore, there exists a finite set of $f_\lambda$ that are nonzero on each of the vertices of $\sigma$. Let $\bar{\Lambda}$ be the finite set of all $\lambda$ such that  $f_\lambda$ is nonzero on $x$ or any of the $p$ vertices of $\sigma$. \\

    For $\lambda \notin \bar{\Lambda}$, $f_\lambda(x) =0=f_\lambda(v_i) \ \forall i$. Thus, $f(x) = \sum\limits_{\lambda \in \bar{\Lambda}} f_\lambda(x)$ and $f(v_i) = \sum\limits_{\lambda \in \bar{\Lambda}} f_\lambda(v_i)$. Therefore,  

    \begin{equation*}
        \begin{split}
            f(x)&= \sum\limits_{\lambda \in \bar{\Lambda}} f_\lambda(x)\\
            &= \sum\limits_{\lambda \in \bar{\Lambda}} \left(\sum\limits_{i=0}^p a_i \cdot f_\lambda(v_i)\right)\\
            &= \sum\limits_{i=0}^p a_i \left(\sum\limits_{\lambda \in \bar{\Lambda}} f_\lambda(v_i)\right)\\
            &= \sum\limits_{i=0}^p a_i \cdot f(v_i)
        \end{split}
    \end{equation*}

    Hence $f$ is p.l. on $T$. 
\end{proof}

We now move on from the juvenile notion of a piece-wise linear triangulation, to the adolescent idea of a nested sequence of p.l. triangulations. We say the pair of simplicial complexes $(T,S)$ is a p.l. triangulation of pair $(X,A)$ if $S \subset T$, $T$ is a triangulation of $X$ and $S$ is a triangulation of $A$.

\begin{definition}\label{def:nested triangulation}
    Let $(X,A)$ be a polyhedral pair in  $\mathbb{R}^d$. We say $\{(T^n, S^n)\}_{n=0}^\infty $ is a nested triangulation of $(X,A)$ if, \\

    \begin{itemize}
        \item $\forall n$, $(T^n, S^n)$ is a triangulation of $(X,A)$\\
        \item $\forall n$, $T^{n+1}$ is a refinement of $T^n$\\
        \item $\sup\limits_{\sigma \in T^n} diam(\sigma) \rightarrow 0$ as $n \rightarrow \infty$\\
    \end{itemize}
\end{definition}

\begin{example}\label{exampel: nested triangulation}
    We again return to the CFK triangulation of Example \ref{example:CFK triangulation}, where the polyhedral pair $(X,A) = (\mathbb{R}^2_\leq , \Delta)$. The CFK triangulation may be scaled by a factor of $\frac{1}{z}$ for integer $z>1$, to yield a triangulation formed by lines of the form $x,y = \frac{1}{z}\mathbb{Z}$ and $y=x+\frac{1}{z} \mathbb{Z}$. This forms the CFK triangulation at scale $\frac{1}{z}$, which is a refinement of the CFK triangulation at scale $1$. 

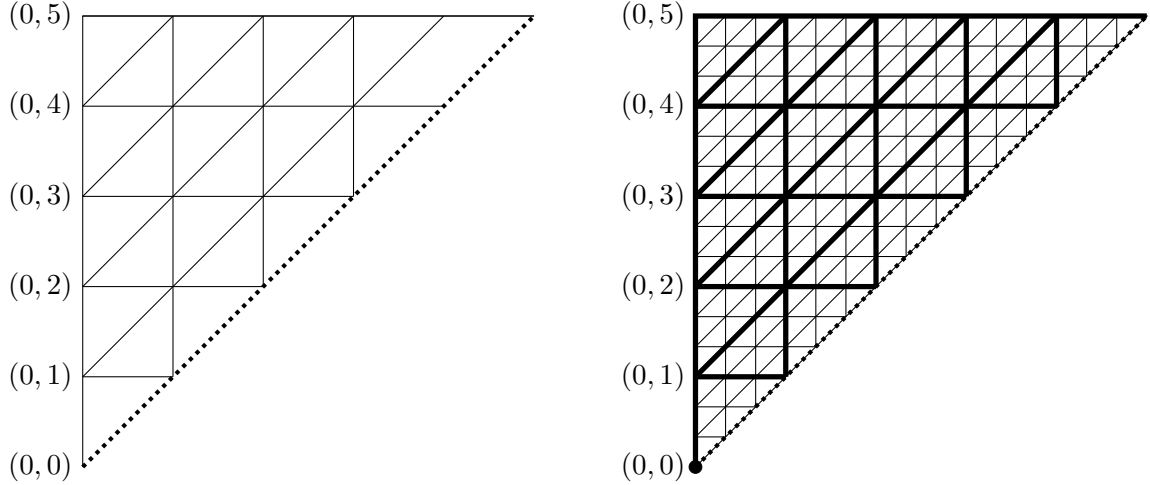
\begin{figure}[H]
    \begin{tikzpicture}[scale=1.2]
            \draw[dotted, line width=0.55mm] (0,0)--(5,5);
            \draw (5,5)--(0,5)--(0,0);
            \draw (0,2)--(2,2);
            \draw (0,1)--(1,1);
            \draw (0,3)--(3,3);
            \draw (0,4)--(4,4);
            \draw (0,5) --(5,5);
            \draw (1,1)--(1,5);
            \draw (2,2)--(2,5);
            \draw (3,3)--(3,5);
            \draw (0,1) -- (4,5);
            \draw (0,2) -- (3,5);
            \draw (0,3) -- (2,5);
            \draw (0,4) -- (1,5);
            \node [black, left] at (0,0){$(0,0)$};
            \node [black, left] at (0,1){$(0,1)$};
            \node [black, left] at (0,2){$(0,2)$};
            \node [black, left] at (0,3){$(0,3)$};
            \node [black, left] at (0,4){$(0,4)$};
            \node [black, left] at (0,5){$(0,5)$};
\end{tikzpicture}\hspace{1cm}\begin{tikzpicture}[scale=1.2]
         \draw[dotted, line width=0.55mm] (0,0)--(5,5);
            \draw[line width=0.70mm] (5,5)--(0,5)--(0,0);
            
            \draw[line width=0.70mm] (0,2)--(2,2);
            \draw[line width=0.70mm] (0,1)--(1,1);
            \draw[line width=0.70mm] (0,3)--(3,3);
            \draw[line width=0.70mm] (0,4)--(4,4);
            \draw[line width=0.70mm] (1,1)--(1,5);
            \draw[line width=0.70mm] (2,2)--(2,5);
            \draw[line width=0.70mm] (3,3)--(3,5);
            \draw[line width=0.70mm] (4,4) -- (4,5);
            \draw[line width=0.70mm] (0,1) -- (4,5);
            \draw[line width=0.70mm] (0,2) -- (3,5);
            \draw[line width=0.70mm] (0,3) -- (2,5);
            \draw[line width=0.70mm] (0,4) -- (1,5);
            \filldraw [black] (0,0) circle (2pt);
            \node [black, left] at (0,0){$(0,0)$};
            \node [black, left] at (0,1){$(0,1)$};
            \node [black, left] at (0,2){$(0,2)$};
            \node [black, left] at (0,3){$(0,3)$};
            \node [black, left] at (0,4){$(0,4)$};
            \node [black, left] at (0,5){$(0,5)$};

            \foreach \i in {0,1,...,15} {
        \draw (0,\i/3)--(5-\i/3,5);
    }

            \foreach \i in {0,1,...,15} {
            \draw (\i/3, \i/3) --(\i/3,5);
            \draw (0,\i/3)--(\i/3,\i/3);
            }
    \end{tikzpicture}

    \caption{ A Refinement of the Coxeter Freudenthal Kuhn triangulation of $(\mathbb{R}^2_\leq , \Delta)$ at scale $\frac{1}{3}$}
    \label{example: nested triangulation}
\end{figure}

We may continue to take refinements of the CFK triangulation, each at scale $\frac{1}{z}$ to the previous refinement. That is, define $T^n$ to be the CFK triangulation of $\mathbb{R}^2_\leq $ at scale $\frac{1}{z^n}$. 

For each $n$, let $S^n$ be the subcomplex of $T^n$ consisting of simplices contained in $\Delta$. Then for each $n$, $(T^n,S^n)$ is a triangulation of $\mathbb{R}^2_\leq$, and $T^{n+1}$ is a refinement of $T^n$. Lastly, for each $n$, $M_n=\sup\limits_{\sigma \in T^n} diam(\sigma) = \frac{\sqrt{2}}{z^{n}}$. Thus $M_n$ decreases to $0$ as $n \rightarrow \infty$, and hence $\{(T^n,S^n)\}_{n=0}^\infty$ is a nested triangulation of $(\mathbb{R}^2_\leq, \Delta)$.

\end{example}

We now consider functionals on a polyhedral pair $(X,A)$ endowed with a nested triangulation $\{(T^n, S^n)\}_{n=0}^\infty$. When the nested triangulation is clear, we will say $f:(X,A) \rightarrow (\mathbb{R},0)$ is $n-$linear, to mean that $f$ is p.l. linear on $T^n$. 

\begin{lemma}\label{lem:n2n+1linear}
    Let $\{(T^n,S^n)\}_{n=0}^\infty $ be a nested triangulation of polyhedral pair $(X,A)$. Let $f:(X,A) \rightarrow (\mathbb{R},0)$ be a functional on $X$. If $f$ is $n$-linear, then $f$ is $n+1$-linear, and hence $f$ is $m-$linear for all $m \geq n$. 
\end{lemma}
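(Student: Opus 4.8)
The plan is to reduce the lemma to its single inductive step: it suffices to show that if $f$ is $n$-linear then $f$ is $(n+1)$-linear, since the assertion that $f$ is $m$-linear for all $m\ge n$ then follows by an immediate induction on $m$. The one structural input I will use is the second bullet of Definition \ref{def:nested triangulation}: $T^{n+1}$ is a refinement of $T^n$, which in the standard sense means $|T^{n+1}|=|T^n|$ and every simplex of $T^{n+1}$ is contained in a single simplex of $T^n$.

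First I would record the following reformulation of p.l.-ness (keeping the standing convention that the homeomorphism $h$ is suppressed): a functional $g$ on a simplex $\sigma=\langle v_0,\dots,v_p\rangle$ is p.l.\ (i.e.\ linear in the barycentric coordinates of $\sigma$) if and only if $g|_\sigma$ is the restriction of an affine map $L_\sigma$ on the affine hull of $\sigma$, namely the unique affine map with $L_\sigma(v_i)=g(v_i)$ for all $i$. This is just the observation that $(a_0,\dots,a_p)\mapsto\sum_i a_i g(v_i)$ extends to a linear, hence affine, function of the coordinates. Thus the hypothesis that $f$ is $n$-linear says precisely: for every $\sigma\in T^n$ there is an affine $L_\sigma$ with $f|_\sigma = L_\sigma|_\sigma$.

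Now, given an arbitrary simplex $\tau\in T^{n+1}$, I would use the refinement property to choose $\sigma\in T^n$ with $\tau\subseteq\sigma$. Then $f|_\tau = L_\sigma|_\tau$ is the restriction of an affine map to $\tau$, and a restriction of an affine map to a subsimplex is again affine, hence p.l.\ on $\tau$. As $\tau$ was arbitrary, $f$ is $(n+1)$-linear, completing the inductive step. (Equivalently, one can avoid the affine-map language by an explicit change of barycentric coordinates: write each vertex of $\tau$ in the barycentric coordinates of $\sigma$, substitute into a point $x=\sum_j c_j w_j\in\tau$, note the resulting weights on the vertices of $\sigma$ are nonnegative and still sum to $1$, and apply $n$-linearity of $f$ twice; this is Lemma \ref{lem:linear sum}-style bookkeeping and yields $f(x)=\sum_j c_j f(w_j)$.) I do not anticipate a genuine obstacle: the only points needing care are confirming that the paper's notion of refinement gives "each $T^{n+1}$-simplex lies in one $T^n$-simplex" (the intended and standard meaning, and exactly what is needed) and tracking the suppressed $h$ when translating $|\tau|\subseteq|\sigma|$ in $|T^n|=|T^{n+1}|$ into the corresponding containment in $X$; neither is a real difficulty.
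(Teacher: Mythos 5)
Your proposal is correct and is essentially the paper's argument: both reduce to the single step $n \Rightarrow n+1$, both use the refinement property to place each simplex $\tau \in T^{n+1}$ inside a simplex $\sigma \in T^n$, and both conclude by observing that the restriction of a barycentrically linear (affine) function to a subsimplex remains barycentrically linear — your parenthetical change-of-barycentric-coordinates computation is exactly the bookkeeping the paper carries out. The affine-map packaging is a clean way to say the same thing and introduces no gap.
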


\begin{proof}
     Let $x \in X$, and $\sigma = \langle v_0, ...v_p\rangle$ be the simplex of minimal degree in $T^n$ such that $x \in \sigma$.  Suppose $x$ has Barycentric coordinates 
$x = \sum\limits_{i=0}^p a_i \cdot v_i$
    
   Similarly, let $\tau= \langle w_0, ...w_q\rangle $ be the minimal simplex in $T^{n+1}$ such that $x \in \tau$. Since $T^{n+1}$ is a refinement of $T^n$, this implies that $\tau\subset \sigma \subset X$. 

\begin{figure}[H]
    \centering
   \begin{tikzpicture}
       \draw (0,0)--(4,-1)--(5,3)--(0,0);
       \draw (3,-0.75)--(4.5,1)--(1.66,1)--(3,-0.75);
       \node[black,left] at (0,0){$v_1$};
       \node[black,right] at (4,-1){$v_2$};
       \node[black,above] at (5,3){$v_0$};
       \node[black,below] at (3,-0.75){$w_1$};
       \node[black,right] at (4.5,1){$w_2$};
       \node[black,above] at (1.66,1){$w_0$};
       \filldraw[black] (3,0) circle (2pt);  
       \node[black,above] at (3,0){$x$};
   \end{tikzpicture}
    \caption{Example using $p=2$}
    \label{fig:enter-label}
\end{figure}
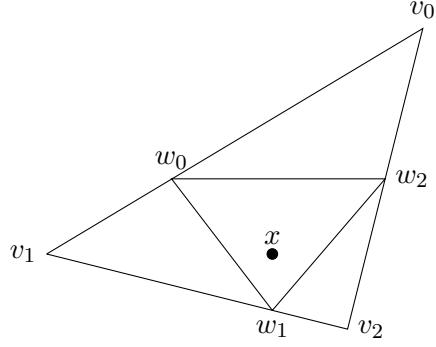

    Since $\tau$ is contained in $\sigma$, each $w_j$ has Barycentric coordinates in $\sigma$ such that $w_j= \sum\limits_{i=0}^p c_{i,j} \cdot v_i$. \\

    Suppose $x$ has Barycentric coordinates in $\tau$ such that $x= \sum\limits_{j=0}^q b_i w_i$. \\

    Now observe that

    \begin{align*}
        x &= \sum\limits_{i=0}^p a_i v_i \\
        &= \sum\limits_{j=0}^q b_j w_j\\
        &= \sum\limits_{j=0}^q b_j \left(\sum\limits_{i=0}^p c_{i,j} v_i\right)\\
        &= \sum\limits_{i=0}^p \left(\sum\limits_{j=0}^q b_j c_{i,j} \right) v_i
    \end{align*}

    By uniqueness of Barycentric coordinates of $x$ in $\sigma$, this implies that $a_i = \sum\limits_{j=0}^q b_j c_{i,j} \ \forall i$. Therefore, since $f$ is $n$-linear, we know $f(w_i) = \sum\limits_{j=0}^p c_{i,j} f(v_j)$. Furthermore,

    \begin{align*}
        f(x) &= \sum\limits_{i=0}^p a_i f(v_i) \\
        &= \sum\limits_{i=0}^p \left( \sum\limits_{j=0}^q b_jc_{i,j} \right)f(v_i)\\
        &= \sum\limits_{j=0}^q b_j \left(\sum\limits_{i=0}^p c_{i,j} f(v_i)\right)\\
        &= \sum\limits_{j=0}^q b_j f(w_j)
    \end{align*}

    Thus $f$ is $n+1$-linear. 
\end{proof}

\section{Constructing a Schauder basis}\label{sec:Schauder Basis}
We can now construct a Schauder Basis of the normed vector space $(Lip_c(X,A), ||\cdot ||_\infty)$ of compactly supported Lipschitz functionals on $X$ which are $0$ on $A$. We begin with our first method of constructing such a Basis, using piece-wise linear functionals respective to nested triangulations. 

\subsection{A Schauder Basis of Piece-wise Linear Functionals}

\begin{definition}\label{def:kernel}
    Let $\{(T^n,S^n)\}_{n=0}^\infty$ be a nested triangulation of polyhedral pair $(X,A)$. Furthermore, let $(L_n)_{n=0}^\infty$ be a sequence of positive real numbers such that $\sum\limits_{n=0}^\infty L_n =L < \infty$.  Let $V^n$ denote the collection of all vertices of $T^n$ that are not vertices of $S^n$. Let $V^{(n)}= V^n \backslash V^{n-1}$ be the set of such vertices that appear at layer $n$ of the nested triangulation.  For each $n$ and $v \in V^{(n)}$, we define the functional $\K_v: (X,A)\rightarrow (\mathbb{R},0)$ to be the unique function with the following properties. \\

    \begin{itemize}
        \item  $\K_v$ is $n-$linear. \\
        \item  $\K_v(v')=0$ for all $v'\in V^n$ s.t.  $v'\neq v$\\
        \item  $Lip(\K_v)= L_n$
    \end{itemize}
\end{definition}

\vspace{0.5cm}

    We will often refer to $V= \bigcup\limits_{n=0}^\infty V^{(n)}$ as the set of all vertices of the triangulations. We illustrate below that a unique function exists with the above properties for each vertex $v \in V$.\\

    \begin{proposition}\label{prop:kernels exist}

    Let $T$ be a triangulation of a polyhedron $X \subset \mathbb{R}^d$ and $L>0$. For $v$ a vertex of $T$, there exists a unique p.l. function $f: X \rightarrow \mathbb{R}$ such that $f(v') =0$ for all vertices $v' \neq v$  and $Lip(f) = L$.  \\

\begin{proof}

    For $f$ to be p.l. on $T$, it must be determined by it's values on vertices. Hence if $f$ is $0$ on all vertices not equal to $v$, then $f$ remains to be determined solely by its value at $v$. Thus it is sufficient to show that such a choice of $f(v)$ can be made such that $Lip(f)=L$. 
    
    For $c >0$,  let $f_c: X \rightarrow \mathbb{R}$ be the p.l. functional, which is  $0$ on all vertices not equal to $v$, and $f_c(v)=c$. Since $f_c$ is 0 on all simplices not containing $v$ as a vertex, $Lip(f)$ is determined by its restriction to simplices containing $v$.

    Let $\sigma \in T$ be such that $v$ is a vertex of $\sigma$. Without loss of generality, assume $\sigma = \langle v_0, v_1...v_d\rangle$ such that $v_0=v$. Then consider the restriction $g_c= f_c|_{\sigma}$. Let $x \in \sigma$, with barycentric coordinates $x= \sum\limits_{i=0}^p a_i v_i$. Then $f_c(x)=g_c(x) = \sum\limits_{i=0}^p a_i f_c(v_i) = a_0 \cdot c$. Thus the level sets of $g_c$ are contained in hyperplanes of the form $H_t= \{x= \sum\limits_{i=0}^d a_i \cdot v_i \in \sigma \ | \ \sum\limits_{i=1}^p a_i = 1-t\}$ where $t$ ranges from $0$ to $1$. Hence the gradient of $g_c$ is a normal vector to any of these level set hyperplanes.

    \begin{figure}[H]
        \centering
        \includegraphics[width=0.5\linewidth]{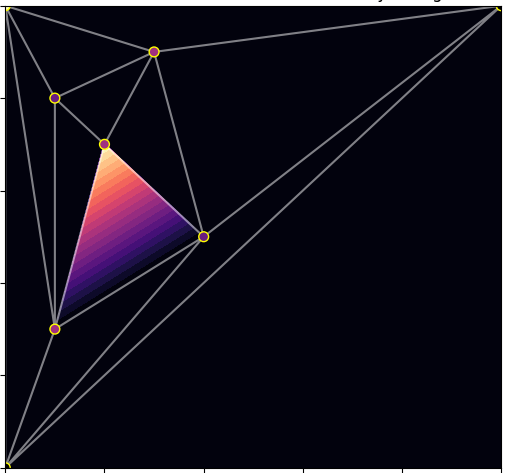}
        \caption{Example graph of $g_c$}
        \label{fig: restriction of kernel}
    \end{figure}

    Let $\bar{g}_c: \mathbb{R}^d\rightarrow \mathbb{R}$ be the linear extension of $g_c$ to all of $\mathbb{R}^d$. That is, using linearly independent $\{v_i-v_0\}_{i=1}^d$ as a basis of $\mathbb{R}^d$, $$\bar{g}_c: v_0 + \sum\limits_{i=1}^d a_i \cdot (v_i-v_0) \mapsto \left(1-\sum\limits_{i=1}^d a_i\right) \cdot c$$



    
    Then $Lip(\bar{g}_c) = Lip(g_c)$. Let $y$ be the projection of $v_0$ onto hyperplane $H_0= \bar{g}_c^{-1}(0)$.   Thus the change in $f_c|_\sigma$ is maximal along the vector $\langle v_0-y\rangle$. Then $Lip(f|_\sigma) = Lip(\bar{g}_c) = \frac{f(v)-f(y)}{d(v,y)}= \frac{c}{d(v,H_0)}$. 

    We now repeat this process for each simplex $\sigma \in T$ containing $v$ as a vertex. For each such $\sigma \in T$,  let $y_\sigma$ be the projection of $v$ onto the hyperplane containing the face of $\sigma $ opposite $v$. Then let $c^* = \min\limits_{\sigma \ | \ v \in \sigma } d(v,y_\sigma) \cdot L$. Note that by Remark \ref{Remark: finite simplices}, this is indeed a minimum not an infemum. Thus $c^*>0$, and this makes $Lip(f_{c^*}) =L$. 

\end{proof}

 \end{proposition}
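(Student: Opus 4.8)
The plan is to collapse the problem to a single real parameter. Since $f$ must be piece-wise linear on $T$, it is determined by its values at the vertices of $T$, and the requirement that $f(v')=0$ for all vertices $v'\neq v$ leaves exactly one free scalar, $c:=f(v)$. Because $Lip(f)=Lip(-f)$, I would first normalize to $c\ge 0$ and write $f_c$ for the p.l.\ function with $f_c(v)=c$ and $f_c\equiv 0$ on every other vertex. The proposition then reduces to the claim that $c\mapsto Lip(f_c)$ is a strictly increasing linear bijection of $[0,\infty)$ onto $[0,\infty)$, for then there is exactly one $c^\ast$ with $Lip(f_{c^\ast})=L$, and $f_{c^\ast}$ is the desired function (unique once the sign is fixed).

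The first main step is the identity
$$Lip(f_c)=\max_{\sigma\ni v} Lip\bigl(f_c|_\sigma\bigr),$$
the maximum ranging over the (finitely many, by Remark \ref{Remark: finite simplices}) simplices of $T$ containing $v$ as a vertex. The inequality ``$\ge$'' is immediate. For ``$\le$'', I would use that $X$, being a finite intersection of half-spaces, is convex, so the segment $[x,y]$ between any $x,y\in X$ stays in $X$; by local finiteness it meets only finitely many simplices, and breaking $[x,y]$ at the crossing points and applying the triangle inequality gives $|f_c(x)-f_c(y)|\le \bigl(\max_\sigma Lip(f_c|_\sigma)\bigr)\,|x-y|$. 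Since $f_c$ vanishes identically on simplices not containing $v$, only the simplices through $v$ contribute, and there are finitely many of them because $X$ is locally compact, so the maximum is attained.

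The second step is the computation of $Lip(f_c|_\sigma)$ for a fixed $\sigma=\langle v=v_0,v_1,\dots,v_p\rangle$. Writing $x=\sum_i a_iv_i$ in barycentric coordinates on $\sigma$, one has $f_c(x)=a_0c$, so $f_c|_\sigma$ is the affine function $c\,a_0$. Extending the barycentric coordinate $a_0$ affinely to the affine hull of $\sigma$ and reading off its slope, one finds $Lip(f_c|_\sigma)=c/h_\sigma$, where $h_\sigma$ is the distance from $v_0$ to the affine hull of the opposite face $\langle v_1,\dots,v_p\rangle$. Combining with the first step, $Lip(f_c)=c\big/\min_{\sigma\ni v}h_\sigma$, a positive constant times $c$; hence $c^\ast:=L\cdot\min_{\sigma\ni v}h_\sigma$ (a genuine, positive minimum, again by local finiteness) gives $Lip(f_{c^\ast})=L$, and strict monotonicity in $c$ yields uniqueness.

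I expect the main obstacle to be the first step: showing that the global Lipschitz constant of a p.l.\ function on $T$ equals the supremum of the Lipschitz constants of its restrictions to individual simplices. This is not deep, but it is exactly where convexity of the polyhedron $X$ and local finiteness of $T$ are used, and without convexity the clean formula for $c^\ast$, and even finiteness of $Lip(f_c)$, would be in jeopardy. Everything else amounts to linear algebra on a single simplex.
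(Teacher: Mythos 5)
Your proposal is correct and follows essentially the same route as the paper: reduce to the single free scalar $c=f(v)$, compute $Lip(f_c|_\sigma)=c/h_\sigma$ via the distance from $v$ to the affine hull of the opposite face, and take $c^\ast=L\cdot\min_{\sigma\ni v}h_\sigma$, the minimum being attained by local finiteness. You are somewhat more careful than the paper on two points it leaves implicit --- the reduction $Lip(f_c)=\max_{\sigma\ni v}Lip(f_c|_\sigma)$ via convexity of $X$ and subdivision of segments, and the sign normalization $c\ge 0$ needed for the uniqueness claim to be literally true --- but these are refinements of the same argument, not a different one.
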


 \begin{example}\label{example:CFK kernels}
Below we illustrate the collection of functionals $\{\K_v\}_{v \in V}$ formed by a CFK-nested triangulation of $(\mathbb{R}^2_\leq , \Delta)$, where the $n$th triangulation $(T^n, S^n)$ is the CFK-triangulation at scale $\frac{1}{2^n}$. In this case, the vertices of $V^{(n)}$ are points of the form $\left(\frac{a}{2^n}, \frac{b}{2^n}\right)$ where $a < b \in \mathbb{Z}$ and either $a$ or $b$ is odd. The distance of a vertex $v=\left(\frac{a}{2^n}, \frac{b}{2^n}\right)$ to the closest hyperplane containing it's opposite face in one of the simplices containing $v$ is $\frac{1}{2^n\sqrt{2}}$. Thus, for a functional $\K_{\left(\frac{a}{2^n}, \frac{b}{2^n}\right)}$ defined to have Lipschitz constant $\frac{1}{2^n}$, we may determine that $\K_{\left(\frac{a}{2^n}, \frac{b}{2^n}\right)}\left(\frac{a}{2^n}, \frac{b}{2^n}\right) = \frac{1}{\sqrt{2}\cdot2^{2n}}$. 
    \begin{figure}[H]
        \centering
        \includegraphics[width=0.5\linewidth]{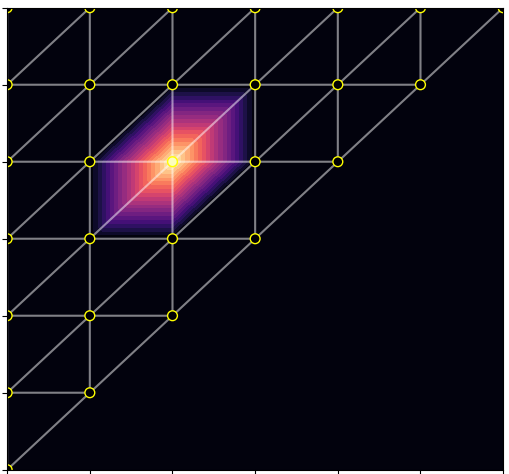}\includegraphics[width=0.5\linewidth]{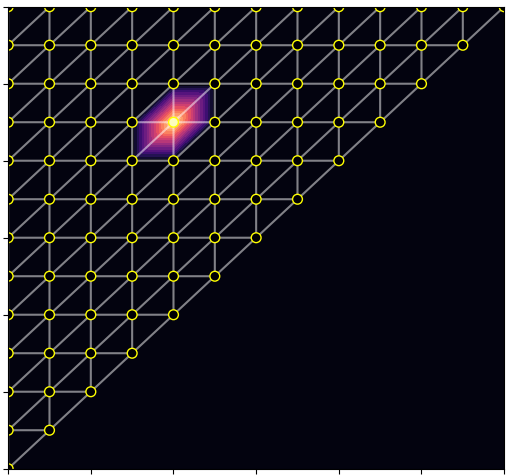}
        \caption{\textbf{(Left)} Functional $\K_{(2,4)}$ of layer 0 , \textbf{(Right)} Functional $\K_{(2, 4.5)} $ of layer 1}
        \label{fig:CFK kernels}
    \end{figure}


\end{example}


\begin{theorem}[Schauder Basis]\label{thrm: Schauder Basis}
    Let $\{(T^n, S^n)\}_{n =0}^\infty$ be a nested triangulation of polyhedral pair $(X,A)$ in $\mathbb{R}^d$. Let $V= \bigcup\limits_{n=0}^\infty V^{(n)}$ be the set of all vertices of these triangulations that are not in $A$.  Let $\B= \{\K_v \ | \ v \in V\}$ be the set of all functionals on $(X,A)$ defined as in Definition \ref{def:kernel}, with Lipschitz constants $(L_n)_{n=0}^\infty , \ \sum\limits_{n=0}^\infty L_n = L$. Then there exists an ordering of $\B$ such that $\B$ is a Schauder basis of $Lip_c(X,A)$ under the $\ell^\infty$ norm.
\end{theorem}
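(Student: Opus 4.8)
\emph{Proof idea.} The strategy is to attach to each $f \in Lip_c(X,A)$ a canonical ``layerwise'' expansion in the kernels $\K_v$, to control its tails by the mesh sizes $M_n := \sup_{\sigma \in T^n}\diam(\sigma)$, and finally to linearly order $\B$ so that \emph{every} partial sum of the reindexed series still converges; uniqueness of coefficients will then fall out of the defining vanishing properties of the $\K_v$. First, for $f \in Lip_c(X,A)$ and each $n$, let $f_n$ be the \emph{interpolant}: the unique $n$-linear functional on $(X,A)$ agreeing with $f$ on $V^n$ (it vanishes on $A$ automatically, since an $n$-linear functional that is $0$ on $A$ is determined by its values on $V^n$, the remaining vertices of $T^n$ lying in $A$). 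Since $f$ is Lipschitz and $M_n \to 0$, a barycentric-coordinate estimate at each point gives $\|f - f_n\|_\infty \le Lip(f)\cdot M_n \to 0$; and since $f$ has compact support, Remark \ref{Remark: finite simplices} shows each $f_n$ has compact support, so only finitely many vertices of each $T^n$ are ``active.''

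Setting $f_{-1} := 0$, I would next establish the layerwise identity
$$ f_n - f_{n-1} \;=\; \sum_{v \in V^{(n)}} a_v \K_v , \qquad a_v := \frac{(f_n - f_{n-1})(v)}{\K_v(v)} \in \R , $$
a finitely supported sum: both sides are $n$-linear (using Lemma \ref{lem:n2n+1linear} for $f_{n-1}$ and Lemma \ref{lem:linear sum} for the right-hand side), both vanish on $A$, and they agree on every vertex of $V^n$ --- on $v' \in V^{(n)}$ only $\K_{v'}$ survives, and on $v' \in V^{n-1}$ every $\K_v$ with $v \in V^{(n)}$ vanishes while $f_n(v') = f_{n-1}(v') = f(v')$. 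Telescoping gives $f_m = \sum_{n=0}^m \sum_{v \in V^{(n)}} a_v \K_v$, so $f = \sum_{n=0}^\infty \sum_{v \in V^{(n)}} a_v \K_v$ in $\|\cdot\|_\infty$, and moreover, for \emph{any} subset $W \subseteq V^{(n)}$,
$$ \Bigl\| \sum_{v \in W} a_v \K_v \Bigr\|_\infty \;\le\; \max_{v \in V^{(n)}} \bigl| (f_n - f_{n-1})(v) \bigr| \;\le\; Lip(f)\cdot M_{n-1} , $$
since evaluating at a point $x$ and expanding in the barycentric coordinates of its minimal simplex in $T^n$ leaves only the layer-$n$ kernels based at vertices of that simplex, each contributing $a_v\K_v(v) = (f_n - f_{n-1})(v)$ times a barycentric weight.

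Now I would fix the ordering. Since $X \subseteq \R^d$ is closed it is $\sigma$-compact; choose an exhaustion $X = \bigcup_m K_m$ by compact sets with $K_m \subseteq K_{m+1}$, and put $U_m := (V^{(0)} \cup \cdots \cup V^{(m)}) \cap K_m$, finite by Remark \ref{Remark: finite simplices}. The $U_m$ increase to $V$; order $\B$ by enumerating $U_0$, then $U_1 \setminus U_0$, then $U_2 \setminus U_1$, and so on. Given $f$ with support $K$, every $f_n$ is supported in the compact set $\widetilde K := \{x \in X : d(x,K) \le M_0\}$ (as $M_n \le M_0$), so $a_v = 0$ whenever $v \notin \widetilde K$; fix $m_1$ with $\widetilde K \subseteq K_{m_1}$. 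For $m \ge m_1$ one checks $\sum_{v \in U_m} a_v \K_v = f_m$, and that the active vertices inside a block $U_{m+1}\setminus U_m$ all lie in $V^{(m+1)}$ (active vertices of lower layers already sit in $U_m$). Hence, writing any large index $N$ as a full union of blocks through $m$ plus an initial segment $W$ of block $m+1$, the two displayed estimates give $\|s_N - f\|_\infty \le \|f_m - f\|_\infty + Lip(f)\cdot M_m \le 2\,Lip(f)\cdot M_m \to 0$ as $N \to \infty$, proving existence of the expansion.

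For uniqueness, suppose $\sum_i c_i \K_{v_i} = 0$ in $\|\cdot\|_\infty$; I would show $c_i = 0$ by induction on the layer of $v_i$. Evaluating the (uniformly, hence pointwise) convergent series at a vertex $v = v_j \in V^{(n)}$ kills all kernels of layers $> n$ (as $v$ is then a vertex of each finer triangulation, distinct from its new vertices), kills all layer-$n$ kernels other than $\K_v$, and the layer-$(<n)$ kernels already have zero coefficient by the inductive hypothesis; so the series evaluates to $c_j\K_v(v)$, forcing $c_j = 0$ since $\K_v(v) \ne 0$ by Proposition \ref{prop:kernels exist}. The main obstacle is the ordering step: the natural expansion is genuinely doubly-indexed, with infinitely many vertices per layer, so turning it into an honest sequence for which every partial sum converges requires both the double exhaustion $\{K_m\}$ and the observation that compact support annihilates the coefficients that would otherwise spoil an incomplete block --- the per-layer bound $\|\sum_{v \in W} a_v\K_v\|_\infty \le Lip(f)\cdot M_{n-1}$ being exactly what prevents overshoot. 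The interpolation, decomposition, and uniqueness steps are, by contrast, just the rigidity of piecewise-linear functionals together with bookkeeping.
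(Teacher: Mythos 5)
Your proposal is correct, and its core — defining the coefficients $a_v = (f(v)-f_{n-1}(v))/\K_v(v)$ layer by layer, identifying the partial sums with the piecewise-linear interpolants of $f$ on $V^n$, bounding $\|f - f_n\|_\infty \le Lip(f)\cdot M_n$ via barycentric coordinates, and proving uniqueness by induction on layers using the vanishing of finer-layer kernels at coarser vertices — is the same as the paper's. Where you genuinely diverge is in the ordering for non-compact $X$ and in how you control partial sums that stop mid-layer. The paper orders $\B$ diagonally over pairs (layer, ``rafter''), so an arbitrary partial sum can be missing vertices from many layers at once; its error estimate therefore accumulates to $\bigl(M_n + \sum_{p\ge n} M_p\bigr)Lip(f)$ and tacitly needs $\sum_n M_n < \infty$, a condition not imposed by Definition \ref{def:nested triangulation}. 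Your ordering by the finite sets $U_m = (V^{(0)}\cup\cdots\cup V^{(m)})\cap K_m$, combined with the observation that compact support forces all active vertices of layers $\le m$ into $U_m$ once $m$ is large, guarantees that any partial sum differs from a full interpolant $f_m$ only by a partial sum over a subset $W \subseteq V^{(m+1)}$; your bound $\|\sum_{v\in W} a_v \K_v\|_\infty \le Lip(f)\cdot M_m$ then gives $\|s_N - f\|_\infty \le 2\,Lip(f)\cdot M_m$ using only $M_n \to 0$. So your route proves the theorem exactly under its stated hypotheses and, as a bonus, makes explicit the mid-layer partial-sum control that the paper's compact case also leaves implicit. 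One small point to tighten: when you declare $f_n$ to be ``the unique $n$-linear functional agreeing with $f$ on $V^n$,'' you should say explicitly that you additionally impose the value $0$ on the vertices of $S^n$ (equivalently on $A$), since a piecewise-linear functional is only determined by its values on \emph{all} vertices of $T^n$; with that stipulation the interpolant is well defined and vanishes on $A$ because $S^n$ triangulates $A$.
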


\begin{proof}

     First we consider the case that $X$ is compact. In this case, for all $n\geq 0$, $|V^{(n)}|< \infty$ by Remark \ref{Remark: finite simplices}. Thus we order $V$ (and equivalently $\B$) lexicographically by $(n,x_1, x_2, ...x_d)$ where $v=(x_i)_{i=1}^d$ and $n$ is such that $v \in V^{(n)}$.  Let $M_n= \sup\limits_{\sigma \in T^n} diam(\sigma)$ for all $n$. By Definition \ref{def:nested triangulation}, $M_n$ decreases to $0$ as $n\rightarrow \infty$. 
    
    Let $f \in Lip_c(X,A)$. We need define $\{a_v\}_{v \in V}$ such that $\sum\limits_{v \in V} a_v \cdot \K_v = \sum\limits_{n=0}^\infty \sum\limits_{v \in V^{(n)}} a_v \cdot \K_v$ converges uniformly to $f$.\\

    We begin with vertices in $V^{(0)}$. Let $v \in V^{(0)}$, then define $a^0_v := \frac{f(v)}{\K^0_v(v)}$. Define $f^0:=\sum\limits_{v \in V} a_v \cdot \K_v$. \\

    Recall $\K^0_v(v') =0$ for all $v' \in V^0 \ $ such that $\ v' \neq v$. This implies that $f^0|_{V^0}= f|_{V^0}$. Furthermore, $f^0$ is p.l. on $T^0$ by Lemma \ref{lem:linear sum}. 

    For $n \geq 1$, and $v \in V^{(n)}$, we define $a_v := \frac{f(v) - f^{n-1}(v)}{\K_v(v)}$. Then define $f^n := f^{n-1} + \sum\limits_{v \in V^{(n)}} a_v \K_v$.  Again, each $\K_v(v') =0 \ \forall v' \in V^n \ $ such that $v' \neq v$. Thus, $f^n|_{V^n} = f|_{V^n}$. \\

    Furthermore, for all $n$, $f^n$ is $n$-linear by Lemmas \ref{lem:linear sum} and \ref{lem:n2n+1linear}.\\

    Now we show that $f^n $ converges to $f $ uniformly as $n \rightarrow \infty$. Fix $n \geq 0$ and let $x \in X$. Let $\sigma = \langle v_0, ...v_d\rangle \in T^n$ such that $x \in \sigma$ with Barycentric coordinates $x = \sum\limits_{i=0}^d c_i \cdot v_i$. Then since $f^n$ is p.l. on $T^n$, and equals $f$ on $V^n$, we have that 
    
    $$f^n(x) = \sum\limits_{i=0}^p c_i f^n(v_i) = \sum\limits_{i=0}^p c_i f(v_i)$$
    
  Then we have the following, \\

    \begin{equation*}
        \begin{split}
            |f(x) - f^n(x) | &= |f(x) - \sum\limits_{i=0}^p c_i f(v_i)|\\
            &= |\sum\limits_{i=0}^p c_i (f(x) - f(v_i))|\\
            &\leq \sum\limits_{i=0}^p c_i |f(x) - f(v_i)|\\
            &\leq \sum\limits_{i=0}^p c_i \cdot Lip(f) \cdot d(x,v_i)\\
            &\leq \sum\limits_{i}^p c_i \cdot Lip(f) \cdot M_n\\
            &= Lip(f) \cdot M_n\\
        \end{split}
    \end{equation*}

    By Definition \ref{def:nested triangulation}, $M_n$ decrease to $0$ as $n $ goes to $\infty$. Therefore, $f^n$ converges uniformly to $f$. \\

    While we have shown the existence of scalars $\{a_v\}_{v \in V}$ such that $\sum\limits_{v \in V} a_v \K_v$ converges uniformly to $f$, it remains to show that the choices of $a_v$ are unique.

    Suppose $f = \sum\limits_{n =0}^\infty \sum\limits_{v \in V^{(n)}} b_v \K_v$. We show that $b_v = a_v$ as defined above for all $n $ and $v \in V^{(n)}$. \\

    Begin with vertices in $V^0$. For $v \in V^0$, note again that for all $v' \in V$, $\K_{v'}(v) =0$  for all $v' \in V^{(0)} $ with $v' \neq v$. Furthermore, $v \notin V^{(n)} \ \forall n >0$. Hence, $\forall v' \in V$,  $\K_{v'}(v)=0$ unless  $v'=v$.   Therefore, if $b_v\neq a_v = \frac{f(v)}{\K_v(v)}$, then  $f^0(v) \neq f(v)$, and $f^n(v) = f^0(v) \neq f(v)$ for all $n \geq 1$. Thus $f^n$ will not converge to $f$. 
    
    Hence conclude $b_v = a_v $ for all $v \in V^0$. We continue by induction. Fix $n$ and suppose that for $m <n$, and $v \in V^{(m)}$, $a_v=b_v$. Let $f^{n-1}$ be defined as before. Then $f(v) = f^{n-1}(v)$ for all $v \in V^{n-1}$. Consider $g:= f-f^{n-1}$. 

    Fix $v \in V^{(n)}$. Then $v\in V^m $ for all $m \geq n$. For such $m\geq n$, recall for any $v' \in V^{(m)}$, $\K_{v'}(v) =0$, unless $m=n$ and $v' =v$. Hence $\K_v$ is the only functional of $\{\K_v \ | \ v \in V^{(m)} \}_{m \geq n}$ which is nonzero on $v$. That is, since all coefficients of functionals $\{\K_{v'}\ | \ v' \in V^{(m)} \ | \ m <n\}$ is already fixed at $b_{v'} = a_{v'}$, then the only remaining coefficient that can alter the sum of functionals on $v$ is $b_v$. Therefore, $b_v = \frac{f(v) - f^{n-1}(v)}{\K_v(v)} = a_v$. Hence the sequence of scalars $\{a_v\}_{v \in V}$ is unique for every $f \in Lip_c(X,A)$, and thus $\B$ is a Schauder Basis of $Lip_c(X,A)$.

    Now we consider the case that $X$ is not compact. In this case, we must be a little more creative in our ordering of $\B$. We will again use the coordinates of the vertices of $V$, along with the unique $n$ for which these vertices exist in $V^{(n)}$, both of which were used in defining elements of $\B$. 

    For each integer $N \geq 1$, define the Rafter at $N$, $Raft(N)$, by $Raft(1) = \{x \in X  \ | \ ||x||_\infty \leq 1\}$ and  $Raft(N)= \{x \in X  \backslash Raft(N-1)  \ s.t. \ ||x||_\infty \leq N  \}$ for $N > 1$. We order $\B$ with these rafters as follows. 

    Begin by ordering elements $\K_v \in \B$ centered at vertices $v \in V^{0}$, with $v \in Raft(1)$. Call this set $\B^{(0,1)}$. Order $\B^{(0,1)}$ lexicographically by coordinates of $v$. Note since $Raft(1)$ is compact, then $\B^{(0,1)}$ is finite. 

    Next, let $\B^{(0,2)}$ be the set of elements $\K_v \in \B$ such that $v \in V^{(0)} \cap (Raft(2) )$. Let $\B^{(1,1)} $ be the subset of $\B$ consisting of functionals $\K_v$ such that $v \in V^{(1)} \cap Raft(1)$. 

    Order both the finite sets $\B^{(0,2)}$ and $\B^{(1,1)}$ lexicographically by coordinates in $\mathbb{R}^d$. 

    We may continue to define sets $\B^{(M,N)}= \{\K_v \in \B \ | \ v \in V^{(M)} \cap Raft(N) \}$ for all $M$ and $N$. This partitions $\B$ into totally ordered finite subsets. 

    Now we define a total ordering of the collection of finite sets $\{\B^{(M,N)}\}_{M,N}$, which induces a total ordering of $\B$. If $N>1$, then $\B^{(M,N)}$ is followed by $\B^{(M+1, N-1)}$. If $N=1$, then $\B^{(M,N)}$ is followed by $\B^{(0, N+1)}$. We now prove that $\B$ forms a Schauder Basis with this ordering.

    Let $f \in Lip_c(X,A)$. Denote $\B= \{\K_{v_i}\}_{i=0}^\infty$ be the total ordering of $\B$ as defined above. \\

    Let $x \in X \backslash A$. Fix some $n>0$ such that the minimal simplex of $\sigma_n$ of $T^n$ containing $x$ is contained in a single rafter. Choose $N$ large enough such that the finitely many vertices of $V^{n}$ contained in $supp(f)$ have index $\leq N$ in the ordering of $\B$ above. \\
    
    Let $a_{v_i}$ be defined as before, in the case that $X$ was compact. Then define $g^N = \sum\limits_{i=0}^N a_i \cdot \K_{v_i}$. 
    Let $f^n$ be defined as in the previous case, where $f^n$ is the n-linear approximation of $f$ by functionals of layer $\leq n$. Note this a finite sum of nonzero functionals since $f$ has compact support. 

    By the previous case in which $X$ was compact, we know that $|f^n(x) -f(x) | \leq M_n \cdot Lip(f)$. 

    Furthermore, $g^N- f^n$ is $0$ on $V^n$. Let $m$ be maximal such that there exists a vertex of $V^{(m)}$ of index $\leq N$. Then $g^N$ is linear on $T^m$.
    
    Next we may write $g^N(x)= f^n(x) + \sum\limits_{p=n+1}^m \sum\limits_{v \in \sigma_p} \bar{c}_{v} (f(v)-f^{p-1}(v))$ where $\sigma_p$ is minimal simplex of $T^p$ containing $x$, and $\bar{c}_v$ is the Barycentric coordinate of $x$ relative to $v$ in $\sigma_p$ if $v $ has index $\leq N$ in $\B$,  and $\bar{c}_v =0$ otherwise. 

    We know from the previous case in which $X$ is compact, that $|f(v)-f^{p-1}(v)| \leq M_{p-1}\cdot Lip(f)$ for all $v \in V$. Therefore, 

    \begin{equation*}
        \begin{split}
            |g^N(x) - f(x) | &\leq |g^N(x) - f^n(x)| +  \sum\limits_{p=n+1}^m \sum\limits_{v \in \sigma_p} \bar{a}_{v} |(f(v)-f^{p-1}(v))|\\
            &\leq M_n \cdot Lip(f) + \sum\limits_{p=n+1}^m M_{p-1}\cdot Lip(f)\\
            &\leq \left( M_n + \sum\limits_{p=n}^\infty M_p \right) \cdot Lip(f)\\
        \end{split}
    \end{equation*}

    Since $\sum\limits_{n=0}^\infty M_n < \infty$, the above converges to $0$ as $n \rightarrow \infty$, as so also as $N \rightarrow \infty$.  

\end{proof}



\begin{definition}
    Let $\B$ be a Schauder basis of $Lip_c(X,A)$ with the $\ell_\infty$ metric. We say $\B$ meets the \emph{locally Lipschitz finite} property iff there exists some $M< \infty$ such that for every $x \in X$, $\sum\limits_{f \in \B \ | \ f(x) \neq 0} Lip(f) \leq M$. 
\end{definition}
\begin{lemma}\label{lemma: LLF}
    Let $\mathbb{B}$ be the Schauder basis associated to a nested triangulation $\{(T^n,S^n)\}_{n=0}^\infty$ of $(X,A)$ where $X \subset \mathbb{R}^d$. Denote the  Lipschitz constants of the functionals in $\B$ by $\{L_n\}_{n=0}^\infty$, $\sum\limits_{n=0}^\infty L_n =L< \infty$. Then $\B$ meets the locally Lipschitz finite property with upper bound $M= L \cdot (d+1)$. 
\end{lemma}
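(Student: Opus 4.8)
The plan is to fix an arbitrary point $x \in X$ and control $\sum_{f \in \B,\, f(x) \neq 0} Lip(f)$ by grouping the basis elements $\K_v$ that are nonzero at $x$ according to the layer $n$ for which $v \in V^{(n)}$. The engine of the argument is a support observation: for $v \in V^{(n)}$, the functional $\K_v$ can be nonzero at $x$ only if $v$ is a vertex of a simplex of $T^n$ containing $x$. Since such a simplex lives in $\mathbb{R}^d$, it has at most $d+1$ vertices, so at most $d+1$ functionals of layer $n$ are nonzero at $x$; each has Lipschitz constant $L_n$, and summing over $n$ yields the uniform bound $\sum_{n=0}^\infty (d+1)L_n = (d+1)L = M$.

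Concretely, I would first dispose of the case $x \in A$: every $\K_v \colon (X,A) \to (\mathbb{R},0)$ vanishes on $A$, so the sum is $0 \le M$. Assume then $x \in X \setminus A$. For each $n$, since $T^n$ triangulates $X$ there is a simplex $\sigma_n = \langle v_0, \dots, v_q\rangle \in T^n$ with $x \in \sigma_n$, and necessarily $q \le d$. Writing $x = \sum_i a_i v_i$ in barycentric coordinates and invoking that $\K_v$ is $n$-linear (Definition \ref{def:kernel}), we get $\K_v(x) = \sum_i a_i \K_v(v_i)$. Now $\K_v(v_i) = 0$ whenever $v_i \neq v$: if $v_i \in V^n$ this is part of the defining conditions of $\K_v$, and if $v_i$ is a vertex of $S^n$ then $v_i \in A$ and $\K_v$ kills $A$ (and $v \notin A$, so indeed $v_i \neq v$). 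Hence if $\K_v(x) \neq 0$ then $v$ must be one of the $v_i$, i.e. $v$ is a vertex of $\sigma_n$.

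It remains to sum. For each fixed $n$, the set $\{v \in V^{(n)} : \K_v(x) \neq 0\}$ injects into the vertex set $\{v_0, \dots, v_q\}$ of $\sigma_n$, of cardinality $q+1 \le d+1$; and $Lip(\K_v) = L_n$ for every $v \in V^{(n)}$. Therefore
\begin{equation*}
\sum_{f \in \B,\, f(x) \neq 0} Lip(f) \;=\; \sum_{n=0}^\infty \ \sum_{\substack{v \in V^{(n)} \\ \K_v(x) \neq 0}} L_n \;\le\; \sum_{n=0}^\infty (d+1) L_n \;=\; (d+1)L,
\end{equation*}
which is finite and independent of $x$. As a byproduct this shows the sum actually converges — worth noting since a priori $\B$ is infinite — so it is a genuine sum of nonnegative reals bounded by $(d+1)L$.

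The only point needing care — and the one I would treat as the crux — is the support observation: it rests on the fact that an $n$-linear functional is pinned down on any simplex of $T^n$ by its vertex values, combined with the precise vanishing conditions from Definition \ref{def:kernel} (zero on all other vertices of $V^n$) together with $\K_v|_A = 0$ and $|S^n| \subseteq A$. Everything after that is purely combinatorial bookkeeping: no diameter estimate and no use of the $M_n \to 0$ hypothesis is required, only that a simplex in $\mathbb{R}^d$ has at most $d+1$ vertices.
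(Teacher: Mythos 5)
Your proof is correct and follows essentially the same route as the paper's: group the basis elements by layer, observe that at layer $n$ only the functionals centered at vertices of a simplex of $T^n$ containing $x$ can be nonzero at $x$ (at most $d+1$ of them, each with Lipschitz constant $L_n$), and sum the geometric-type series to get $(d+1)L$. The only cosmetic difference is that the paper works with the unique simplex whose interior contains $x$ and a face argument, whereas you use an arbitrary containing simplex together with the vanishing conditions of Definition \ref{def:kernel}; both yield the same bound.
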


\begin{proof}
    Let $x \in X$ and $n \geq 0$. Then  $x$ is in the interior of exactly 1 simplex in $T^n$. Let $\sigma_n = \langle v_0^n, ...v^n_{p_n}\rangle $ be this simplex, where $p_n \leq d$. Then there exists Barycentric coordinates $\{a_i\}_{i=0}^{p_n}$ such that $x = \sum_{i=0}^{p_n} a_i v^n_i$. \\

    Let $\tau= \langle w_0, ...w_q\rangle$ be any simplex of $T^n$ such that $x \in \tau$. Then since $x$ is in the interior of $\sigma_n$, $\sigma_n$ must be a face of $\tau$. Furthermore, if $\{b_j\}_{j=0}^q$ are the Barycentric coordinates of $x$ in $\tau$, then $b_j=0$ iff the associated vertex $w_j$ of $\tau$ is not a vertex of $\sigma_n$. Furthermore, since every functional at layer $n$ is piece-wise linear on $T^n$, then the only functionals which are nonzero on $x$ are precisely the ones which are nonzero on at least one vertex of $\sigma_n$. But each functional at layer $n$ is nonzero at precisely 1 vertex of $T^n$, thus the functionals which are nonzero on $x$ are precisely $\{\K_{v^n_i}\}_{i=0}^{p_n}$ for vertices $\{v^n_i\}_{i=0}^{p_n} $ of $\sigma_n$.

    Therefore, $\sum\limits_{\K_v \in \B \ | \ \K_v(x) \neq 0} \K_v = \sum\limits_{n=0}^\infty \sum\limits_{i=0}^{p_n} \K_{v^n_i}$, where  $\forall n$,  $\sigma_n$ is the unique simplex of $T^n$ such that $x \in Int(\sigma)$. \\

    Since $\sigma$ is a simplex of a triangulation of $\mathbb{R}^d$, then $p_n\leq d$ for all $n$, and hence there are at most $d+1$ functionals at layer $n$ which are nonzero on $x$. 

    Lastly, recall that at layer $n$, a functional has Lipschitz constant $L_n$ as from the definition \ref{def:kernel}, and $\sum\limits_{n=0}^\infty L_n =L< \infty$. Then we have that, 

    $$\sum\limits_{\K_v \in \B \ | \ \K_v(x) \neq 0} Lip(\K_v) = \sum\limits_{n=0}^\infty \left(\sum\limits_{i=0}^{p_n} Lip(\K_{v_i^n})  \right)\leq \sum\limits_{n=0}^\infty (d+1) \cdot L_n = (d+1)\cdot L$$

    Thus $\B$ meets the locally Lipschitz finite property with upper bound $M=(d+1)\cdot L$. 
\end{proof}

\subsection{Stacked Functionals on Nested Coxeter-Freudenthal-Kuhn Triangulations}

The previously described method is not the only way we can use nested triangulations to define a Schauder basis of $Lip_c(X,A)$. We propose one more method, having an additional property outlined in sections 5 and 6, namely that the induced vectorization takes as input an unsigned persistence diagram $\alpha$, and maps it to a vector whose norm equals $W_1(\alpha, \emptyset)$. \\

\begin{definition}\label{def:stacked functional}
    Let  $X$ be a compact polyhedron in $\mathbb{R}^d$ and $\{(T^n, S^n)\}_{n=0}^\infty$ a nested CFK-triangulation, where $T^n$ is the CFK-triangulation at scale $\frac{1}{z^n}$ for some $z \geq 2$. For all $n \geq 0$ and $v \in V$, let $\K_v^n$ denote the n-linear functional, nonzero at v, as defined in Definition \ref{def:kernel}, having Lipschitz constant $\frac{1}{z^n}$. Note we are including vertices $v \notin V^{(n)}$ in this context.\\

    For any $v \in V$, let $N$ be minimal such that $v \in V^{(N)}$. Define the \emph{Stacked Functional} to be $$\mathfrak{K}_v:= \frac{\sqrt{2}\cdot d(v,A) (z^2-1)}{z^2}\sum\limits_{n=N}^\infty \K_v^n$$

\end{definition}

We note a few useful properties of these functionals. Recall that for functionals $\K_v^n$, $Lip(\K_v^n) = \frac{1}{z^n}$. Therefore, $Lip(\mathfrak{K}_v) = \frac{\sqrt{2}\cdot d(v,A) (z^2-1)}{z^2}\sum\limits_{n=N}^\infty \frac{1}{z^n}= \frac{\sqrt{2}\cdot d(v,A) (z^2-1)}{z^2} \cdot \frac{z^{1-N}}{z-1}$.\\

Also, recall that $\K_v^n(v) = \frac{1}{\sqrt{2}z^n} \cdot \frac{1}{z^n}= \frac{1}{\sqrt{2}z^{2n}}$. Therefore, $\mathfrak{K}_v(v) = \frac{\sqrt{2}\cdot d(v,A) (z^2-1)}{z^2} \sum\limits_{n=N}^\infty \frac{1}{\sqrt{2}\cdot z^{2n}}= \frac{ d(v,A) (z^2-1)}{z^2} \left( \frac{z^{2-2N}}{z^2-1}\right)= \frac{d(v,A)}{z^{2N}}$.\\

Similar to the Schauder basis  $\B= \{\K_v\}_{v \in V}$, if $v \in V^{(N)}$, then $\mathfrak{K}_v$ is $0$ on all  $v' \in V^{(N)}$ such that $v'\neq v$, and for all vertices $v' \in V^{(M)}$ for $M >N$. 

\begin{figure}[H]
    \includegraphics[width=0.4\linewidth]{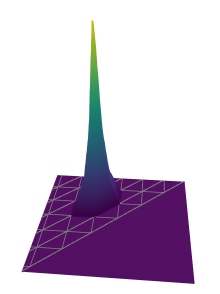}\includegraphics[width=0.3\linewidth]{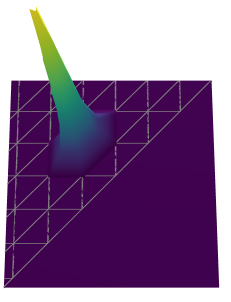}
    \caption{A plot of the stacked Kernel $\mathfrak{K}_{(2,4)}$ of the CFK-triangulation at scale 1}
    \label{fig:Seussian Kernel}
\end{figure}

\begin{theorem}[Stacked Schauder Basis] \label{thrm:stacked schauder basis} Suppose $X\subset \mathbb{R}^d$ is compact, and let $\{(T^n, S^n)\}_{n=0}^\infty$ be a nested CFK-triangulation of scales $\frac{1}{z^n}$. Let $\B= \{\mathfrak{K}_v\}_{v \in V}$, ordered lexicographically by $N$ and coordinates of $v \in V^{(N)}$. Then $\B$ is a l.l.f. Schauder basis of $Lip_c(X,A)$. 
    
\end{theorem}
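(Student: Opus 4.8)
The plan is to deduce Theorem~\ref{thrm:stacked schauder basis} from Theorem~\ref{thrm: Schauder Basis} and Lemma~\ref{lemma: LLF} by observing that the stacked functionals $\mathfrak{K}_v$ are, up to scalar, the same functionals that would be produced by applying the construction of Definition~\ref{def:kernel} to the same nested CFK-triangulation with a suitable choice of summable Lipschitz sequence $(L_n)$. Concretely, for $v \in V^{(N)}$ the functional $\mathfrak{K}_v$ is $n$-linear for every $n \geq N$ (each $\K_v^n$ is $n$-linear, and $n$-linearity is preserved under passing to $m \geq n$ by Lemma~\ref{lem:n2n+1linear}, so the partial sums are all $n$-linear for any fixed $n \geq N$, and the locally finite sum over $n$ is p.l.\ on each $T^m$ by Lemma~\ref{lem:linear sum} applied inside a fixed simplex, using Remark~\ref{Remark: finite simplices}); it vanishes on every vertex $v' \neq v$ of $T^N$ and on every vertex of $V^{(M)}$ for $M > N$, exactly as noted in the text following Definition~\ref{def:stacked functional}; and it has a well-defined finite Lipschitz constant $\operatorname{Lip}(\mathfrak{K}_v) = \frac{\sqrt 2\, d(v,A)(z^2-1)}{z^2}\cdot\frac{z^{1-N}}{z-1}$, which is the only place the precise normalizing constant matters.

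First I would make precise the sense in which $\{\mathfrak{K}_v\}_{v\in V}$ is ``the Schauder basis of Definition~\ref{def:kernel} for a modified $(L_n)$.'' The subtlety is that in Definition~\ref{def:kernel} the functional attached to a vertex $v\in V^{(n)}$ depends only on the layer $n$ through a single scalar $L_n$, whereas here the Lipschitz constant of $\mathfrak{K}_v$ depends on $v$ itself through $d(v,A)$. So the cleanest route is \emph{not} to cite Theorem~\ref{thrm: Schauder Basis} verbatim but to rerun its proof with $\K_v$ replaced by $\mathfrak{K}_v$ throughout. The three properties of $\mathfrak{K}_v$ listed above are exactly the properties of $\K_v$ that were used: $n$-linearity (to invoke Lemmas~\ref{lem:linear sum} and~\ref{lem:n2n+1linear}), the vanishing-on-other-vertices property (to get $f^n|_{V^n}=f|_{V^n}$ and to run the uniqueness induction), and a finite nonzero value $\mathfrak{K}_v(v)=d(v,A)/z^{2N}\neq 0$ for $v\notin A$ (to define the coefficients $a_v := \bigl(f(v)-f^{n-1}(v)\bigr)/\mathfrak{K}_v(v)$). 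The uniform-convergence estimate $|f(x)-f^n(x)|\le \operatorname{Lip}(f)\cdot M_n$ with $M_n=\sqrt 2\, z^{-n}\to 0$ goes through unchanged, since it only uses that $f^n$ is $n$-linear and agrees with $f$ on $V^n$ — it never uses anything about the basis functionals' Lipschitz constants. Because $X$ is assumed compact here, the simpler lexicographic ordering suffices and the rafter construction is not needed.

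Next I would handle the \emph{locally Lipschitz finite} claim. The argument of Lemma~\ref{lemma: LLF} shows that for each $x\in X$ and each $n$, at most $d+1$ of the functionals at layer $n$ are nonzero at $x$, namely those centered at the $\le d+1$ vertices of the minimal simplex $\sigma_n\ni x$ in $T^n$. For such a vertex $v\in V^{(N)}$ with $N\le n$ we have $\operatorname{Lip}(\mathfrak{K}_v)=\frac{\sqrt 2\,d(v,A)(z^2-1)}{z^2}\cdot\frac{z^{1-N}}{z-1}$. Since $X$ is compact, $d(v,A)$ is bounded by $\operatorname{diam}(X)=:D<\infty$, so $\operatorname{Lip}(\mathfrak{K}_v)\le C\, z^{-N}$ for an absolute constant $C=C(z,D)$; summing $C z^{-N}$ over the layers $N$ (with multiplicity $\le d+1$ per layer, and noting a vertex of layer $N$ only contributes to $\sigma_n$ for $n\ge N$) gives a geometric bound $\sum_{\mathfrak{K}_v(x)\ne 0}\operatorname{Lip}(\mathfrak{K}_v)\le (d+1)C\sum_{N\ge 0}z^{-N} = (d+1)C\cdot\frac{z}{z-1}=:M<\infty$, independent of $x$. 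This establishes the l.l.f.\ property.

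The main obstacle I anticipate is bookkeeping rather than conceptual: one must be careful that $\mathfrak{K}_v$ is genuinely p.l.\ on \emph{every} $T^m$, not merely that each partial sum $\sum_{n=N}^{M}\K_v^n$ is. This needs the locally finite sum argument — fix $x$, fix $m$, and note that on the minimal simplex of $T^m$ through $x$ only finitely many $\K_v^n$ are nonzero at $x$ (here one should double check: $\K_v^n$ for large $n$ is supported on a tiny star around $v$, so for $x\ne v$ only finitely many $n$ contribute, while at $x=v$ all terms contribute but the sum telescopes to the finite value $d(v,A)/z^{2N}$) — and then apply Lemma~\ref{lem:linear sum} inside that simplex. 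A second, minor point is to confirm that the coefficients extracted in the uniqueness half of the argument are forced: since $\mathfrak{K}_v$ is the unique element of $\{\mathfrak{K}_{v'}: v'\in V^{(M)},\, M\ge N\}$ that is nonzero at $v$, the induction on layers from the proof of Theorem~\ref{thrm: Schauder Basis} pins down $a_v$ exactly as before. With these checks in place, $\B=\{\mathfrak{K}_v\}_{v\in V}$ is a Schauder basis of $Lip_c(X,A)$ under $\|\cdot\|_\infty$ satisfying the locally Lipschitz finite property, which is the assertion of the theorem.
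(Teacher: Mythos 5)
There is a genuine gap at the heart of your convergence argument. You assert that for $v \in V^{(N)}$ the stacked functional $\mathfrak{K}_v$ is $n$-linear for every $n \geq N$, and hence that the partial sums $f^n$ are $n$-linear, so that the interpolation estimate $|f(x)-f^n(x)| \leq \operatorname{Lip}(f)\cdot M_n$ from Theorem \ref{thrm: Schauder Basis} ``goes through unchanged.'' This is false. The functional $\mathfrak{K}_v = c\sum_{n\geq N}\K_v^n$ contains the term $\K_v^{n+1}$, which is a bump supported on the star of $v$ in $T^{n+1}$ and is \emph{not} piecewise linear on $T^n$ (Lemma \ref{lem:n2n+1linear} only passes linearity from coarse to fine, never from fine to coarse). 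Consequently $\mathfrak{K}_v$ is not $m$-linear for \emph{any} finite $m$ --- this is visible in Figure 10 --- and the paper states this explicitly: ``However, $f^0$ is not linear on $T^0$.'' Your proposed repair in the ``main obstacle'' paragraph (a locally-finite-sum appeal to Lemma \ref{lem:linear sum}) does not close the gap, because Lemma \ref{lem:linear sum} requires every summand to already be p.l.\ on the triangulation in question, which the fine-scale terms $\K_v^n$, $n>m$, are not on $T^m$. Pointwise convergence of the sum at each $x$ is not the issue; affineness on the simplices of $T^m$ is, and it genuinely fails.

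The missing idea is the paper's auxiliary truncation: define $\hat f^N$ to be $f^N$ with each stacked functional cut off at scale level $N$, i.e.\ keeping only the terms $\K_v^n$ with $n \leq N$. Then $\hat f^N$ \emph{is} $N$-linear, but it no longer interpolates $f$ on $V^N$ --- one computes $\hat f^0(v_i) = \frac{z^2-1}{z^2} f(v_i)$, a damped value --- so the clean estimate $\operatorname{Lip}(f)\cdot M_N$ must be replaced by an inductive bound of the form $\|f-\hat f^N\|_\infty \leq \bigl(\sum_{n=0}^N z^n\bigr) z^{-2N}\operatorname{Lip}(f) + z^{-2N-2}\sup(|f|)$, which carries an extra $\sup(|f|)$ term absent from your argument. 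One then separately shows $\|f^N-\hat f^N\|_\infty \to 0$ using that the leftover fine-scale bumps at layer $N$ have pairwise disjoint supports, so the sup of their sum is $\sup_{v\in V^{(N)}}|f(v)-\hat f^N(v)|$. Your uniqueness argument and your locally-Lipschitz-finite computation (geometric decay of $\operatorname{Lip}(\mathfrak{K}_v)$ in the layer, at most $d+1$ functionals per layer nonzero at a point, compactness bounding $d(v,A)$) are correct and match the paper, but without the $\hat f^N$ device the existence half of the Schauder basis claim is not established.
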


\begin{proof}
    
    Fix $f \in Lip_c(X,A)$. We need define coefficients $\{a_v\}_{v \in V}$ such that $f= \sum\limits_{v \in V} \mathfrak{K}_v$. Begin, as before, with vertices in $T^0$. \\

    For such $v \in V^0$, let $a_v := \frac{f(v)}{\mathfrak{K}_v(v)} = \frac{f(v)}{d(v,A)}$. Let $f^0 := \sum\limits_{v \in V^0} a_v \mathfrak{K}_v$. By construction, $f|_{V^0} = f^0|_{V^0}$. However, $f^0$ is not linear on $T^0$. We will still continue to define $f^N$ iteratively so that for $v \in V^{(N)}$, $a_v := \frac{f(v)-f^N(v)}{\mathfrak{K}_v(v)} \mathfrak{K}_v$. In so doing, $f|_{V^N} = f^N|_{V^N}$ for all $N \geq 0$. We need show that $f^N$ converges to $f$ as $N \rightarrow \infty$. \\

    To do this, we must define another series of functions closely related to $\{f^N\}_{N=0}^\infty$. 

    Note that $f^N:=\sum\limits_{m=0}^N \sum\limits_{v \in V^{(m)}} a_v \mathfrak{K}_v= \sum\limits_{m=0}^N \sum\limits_{v \in V^{(m)}} a_v  \frac{\sqrt{2}\cdot d(v,A) (z^2-1)}{z^2}\sum\limits_{n=m}^\infty \K_v^n$.

    Define $\hat{f}^N$ to be the subsum of functionals $\K_v^n$ of $n \leq N$. That is,  $$\hat{f}^N := \sum\limits_{m=0}^N \sum\limits_{v \in V^{(m)}} a_v \frac{\sqrt{2}\cdot d(v,A) (z^2-1)}{z^2}\sum\limits_{n=m}^N \K_v^n$$

    We claim the following. 

    \begin{lemma}
        For all $N \geq 0$, $||f-\hat{f}^N||_\infty \leq \left( \frac{\sum\limits_{n=0}^N z^n}{z^{2N}}\right) Lip(f)  + \frac{1}{z^{2N+2}} \sup(|f|)$. Hence $\hat{f}^N \xrightarrow{N} f$ uniformly.
    \end{lemma}

        \begin{proof}
            We prove this inductively on $N$. Begin with $N=0$. Let $x \in X$, and let $\sigma = \langle v_0, ...v_p\rangle$ be a simplex in $T^0$ containing $x$, such that $x$ has Barycentric coordinates $\{c_i\}_{i=0}^p$.  

            \begin{equation*}
                \begin{split}
                    |f(x) - \hat{f}^0(x)|&= |f(x) - \sum\limits_{i=0}^p c_i \hat{f}^0(v_i)| \hspace{0.5cm}\textbf{ because $\hat{f}^0$ is $0$-linear}\\
                    &\leq \sum\limits_{i=0}^p c_i \left(|f(x)-f(v_i)| + |f(v_i) - \hat{f}^0(v_i)|\right)\\
                    &\leq Lip(f) + \max\limits_{i} |f(v_i) - \hat{f}^0(v_i)| \\
                \end{split}
            \end{equation*}

            We now put a bound on $|f(v_i)-\hat{f}^0(v_i)|$. Recall that 

            \begin{equation*}
                \begin{split}
                    \hat{f}^0(v_i)&= a_{v_i} \left( \frac{\sqrt{2} \cdot d(v,A) (z^2-1)}{z^2}\right) \K_{v_i}^0(v_i)\\
                    &= \left(\frac{f(v_i)}{d(v,A)}\right)\left(\frac{\sqrt{2} \cdot d(v,A) (z^2-1)}{z^2}\right) \left(\frac{1}{\sqrt{2}}\right)\\
                    &= \left(\frac{z^2-1}{z^2}\right) f(v_i)
                \end{split}
            \end{equation*}

            Therefore, $|f(v_i)-\hat{f}^0(v_i)| \leq \frac{1}{z^2}\sup(|f|)$. Hence $ |f(x) - \hat{f}^0(x)| \leq Lip(f) + \frac{1}{z^2} \sup(|f|)= \left(\frac{\sum\limits_{n=0}^0 z^n}{z^{2 \cdot 0}}\right)Lip(f) + \frac{1}{z^{2\cdot 0 +2}} \sup(|f|)$. \\

            Now let $N >0$. Suppose that $||f-\hat{f}^{N-1}||_\infty \leq \left( \frac{\sum\limits_{n=0}^{N-1} z^n}{z^{2N-2}}\right) Lip(f)  + \frac{1}{z^{2N}} \sup(|f|)$. As before, let $x \in X$ be contained in a simplex $\sigma \in T^N$ such that $\sigma = \langle v_0, ...v_p\rangle$.  \\

            \begin{equation*}
                \begin{split}
                    |f(x)-\hat{f}^N(x) | &\leq \sum\limits_{i=0}^p c_i \left(|f(x)-f(v_i)| + |f(v_i) - \hat{f}^N(v_i)|\right)\\
                    &\leq \frac{1}{z^N} Lip(f) + \max\limits_i |f(v_i) - \hat{f}^N(v_i)|\\
                \end{split}
            \end{equation*}

            We again need place an upper bound on $|f(v_i) - \hat{f}^N(v_i)|$. \\

            \begin{equation*}
                \begin{split}
                    \hat{f}^N(v_i) &= \hat{f}^{N-1}(v_i) + a_{v_i} \left(\frac{\sqrt{2} d(v,A)(z^2-1)}{z^2}\right)\K_{v_i}^N(v_i)\\
                    &= \hat{f}^{N-1}(v_i) + \left( \frac{(f(v_i)-\hat{f}^{N-1}(v_i))z^{2N}}{d(v,A)}\right) \left(\frac{\sqrt{2} d(v,A)(z^2-1)}{z^2}\right)\left(\frac{1}{\sqrt{2} \cdot z^{2N}}\right)\\
                    &=  \hat{f}^{N-1}(v_i) + (f(v_i) -  \hat{f}^{N-1}(v_i) ) \left(\frac{z^2-1}{z^2}\right)\\
                    &= \frac{1}{z^2}  \hat{f}^{N-1}(v_i)  + \left(\frac{z^2-1}{z^2}\right) f(v_i)\\
                \end{split}
            \end{equation*}

            Therefore, $|f(v_i) - \hat{f}^N(v_i)| = \frac{1}{z^2} | f(v_i) - \hat{f}^{N-1}(v_i)|$. Hence we have the following. \\

            \begin{equation*}
                \begin{split}
                     |f(x)-\hat{f}^N(x) |&\leq \frac{1}{z^N} Lip(f) + \frac{1}{z^2} | f(v_i) - \hat{f}^{N-1}(v_i)|\\
                     &\leq \frac{1}{z^N} Lip(f) + \frac{1}{z^2} \left(\left( \frac{\sum\limits_{n=0}^{N-1} z^n}{z^{2N-2}}\right) Lip(f)  + \frac{1}{z^{2N}} \sup(|f|)\right) \\
                     &= \left(\frac{1}{z^N} + \frac{\sum\limits_{n=0}^{N-1} z^n}{z^{2N}} \right) Lip(f) + \frac{1}{z^{2N+2} }\sup(|f|)\\
                     &= \left(\frac{\sum\limits_{n=0}^{N} z^n}{z^{2N}} \right) Lip(f) + \frac{1}{z^{2N+2} }\sup(|f|)\\
                \end{split}
            \end{equation*}

            Note that $\frac{\sum\limits_{n=0}^{N} z^n}{z^{2N}}  = \frac{1}{z-1} \left(\frac{1}{z^{N-1}} -\frac{1}{z^{2N}}\right)$. Hence the above expression converges to $0$ as $N$ goes to $\infty$. \\

            Thus we have shown that $||\hat{f}^N -f||_\infty \xrightarrow{N} 0$. \\
            
            \end{proof}
            
            In order to show that $||f^N-f||_\infty \xrightarrow{N} 0$, we will show that $||\hat{f}^N-f^N||_\infty \xrightarrow{N} 0$. \\

            To show this, fix $N$ and consider $ f^N-\hat{f}^N$. Note then that by defintion, $f^N-\hat{f}^N$ is the sum of functionals $\K_v^n$ for $n \geq N+1$, centered at vertices $v \in V^{(N)}$.

                    $$f^N-\hat{f}^N=\sum\limits_{v \in V^{(N)}} \left( a_v \cdot \frac{\sqrt{2} \cdot d(v,A) (z^2-1)}{z^2}\right)\sum\limits_{n=N+1}^\infty \K_v^n$$

                    Note that for $v,v' \in V^{(N)}$ with $v \neq v'$, and $m \geq N+1$, that the supports of $\K_{v}^m $ and $\K_{v'}^m$ are disjoint. Therefore, since each $\K_v^n$ achieves its maximum as $v$ for all $v $and $n$, this implies that 

                    $$||f^N-\hat{f}^N||_\infty = \sup\limits_{v \in V^{(N)}} |f^N(v) - \hat{f}^N(v)|$$

                    However, recall that $f^N(v) = f(v)  \ \forall v \in V^{(N)}$. Hence 

                    $$||f^N-\hat{f}^N||_\infty =\sup\limits_{v \in V^{(N)}} |f(v) - \hat{f}^N(v)| $$

                    We have shown that $\hat{f}^N \xrightarrow{N} f$ uniformly. Thus we have shown that $||f^N-\hat{f}^N||_\infty \xrightarrow{N} 0$. Combining these results, we have shown that $f^N \xrightarrow{N} f$ uniformly. \\

            We now need verify that the collection of coefficients $\{a_v\}_{v \in V}$ are unique for the summations $f^N$ to converge to converge to $f$. Recall that for $v \in V^{(N)}$, and $v' \in V^{(M)}$ with $v' \neq v$ and $M \geq N$, we have that $\mathfrak{K}_{v'}(v)=0$. Hence, for the same argument as in the proof of Theorem \ref{thrm: Schauder Basis}, the coefficients $\{a_v\}_{v \in V}$ are indeed unique. Hence we conclude that $\B= \{\mathfrak{K}_v\}_{v \in V}$ is a Schauder basis of $Lip_c(X,A)$. \\

            The last part of our theorem requires that we verify that $\B$ is locally Lipschitz finite. Let $x \in X$, and fix $N \geq 0$. Then there is a unique simplex of minimal degree $\sigma \in T^N$ such that $x \in \sigma$. As in the case of Theorem \ref{thrm: Schauder Basis}, the only functionals of $\mathfrak{K}_v$ centered at vertices $v \in V^{(N)}$, are those such that $v \in \sigma$. Recall that $Lip(\mathfrak{K}_v) = \frac{\sqrt{2}\cdot d(v,A) (z^2-1)}{z^2} \frac{z^{1-N}}{z-1}$. Since there are at most $d+1$ of these functionals nonzero on $x$, then the sum of Lipschitz constants of functionals centered at vertices of layer $N$, nonzero on $x$, is at most $(d+1)\frac{\sqrt{2}\cdot d(v,A) (z^2-1)}{z^2} \frac{z^{1-N}}{z-1}$. Furthermore, since $X$ is compact, there exists some $\tilde{M}$ such that $d(v,A) \leq \tilde{M}$ for all $v \in V$. Therefore, summing over $N$ gives us an upper bound $M$ on the the sum of Lipschitz constants of functionals which are nonzero on $x$. 

            \begin{equation*}
                \begin{split}
                   M&= \sum\limits_{N=0}^\infty (d+1) \frac{\sqrt{2}\cdot \tilde{M} (z^2-1)}{z^2} \frac{z^{1-N}}{z-1}\\
                   &=\frac{(d+1) z\sqrt{2}\cdot \tilde{M} (z^2-1)}{z^2(z-1)} \sum\limits_{N=0}^\infty \frac{1}{z^N}\\
                    &= \frac{(d+1) z^2\sqrt{2}\tilde{M}(z^2-1)}{z^2(z-1)^2}
                \end{split}
            \end{equation*}
            $$$$

\end{proof}

\section{Vectorizing Persistence Diagrams by Schauder Bases}

We now turn our attention to the stated goal of this paper, mapping signed persistence diagrams on polyhedral pairs into sequence space. We initially embed persistence diagrams into $\ell_1$ through a Lipschitz mapping. However, if one wishes to make use of a Hilbert space structure, one may compose this embedding with the embedding of $\ell_1$ into $\ell_2$, or into $\ell_p$ for $p>2$.

\subsection{Feature Maps for Signed Persistence Diagrams}

\begin{definition}
    Let $\mathbb{B}$ be a  Schauder basis of $(Lip_c(X,A), \ell_\infty)$. We will define \emph{vectorization by $\mathbb{B}$} to be the function $F_\mathbb{B}: D(X,A) \rightarrow \mathbb{R}^\omega$ mapping a persistence diagram $\alpha$ , to the sequence of real numbers given by

    $$F_\mathbb{B}(\alpha) :=\left(\alpha(f)\right)_{f \in \mathbb{B}}$$

    Where if $\alpha = \sum\limits_{i=0}^\infty sign(x_i) \cdot x_i$, then $\alpha(f) = \sum\limits_{i=0}^\infty sign(x_i) \cdot f(x_i)$. 
\end{definition}

Note that $\forall f \in \mathbb{B}$, $f \in Lip_c(X,A)$. Thus all such $f$ are $0$ on $A$. Hence the choice of a different representative of $\alpha$ with more or fewer terms from $A$ does not affect the value of the vectorization on $\alpha$. Hence $F_{\mathbb{B}}$ is well defined. 

\begin{theorem}[Stability Theorem]\label{thrm: schauder stability}
    Let $\mathbb{B}=$ be a totally ordered Schauder basis of $(Lip_c(X,A), \ell_\infty)$, with the locally Lipschitz finite property with upper Lipschitz bound $M$. Then vectorization by $\mathbb{B}$ is injective and

    $$||F_\B(\alpha) - F_\B(\beta)||_1 \leq 2M \cdot W_1(\alpha, \beta)$$
\end{theorem}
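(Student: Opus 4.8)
The plan is to establish injectivity and the stability bound simultaneously by reducing everything to a statement about evaluating functionals on the underlying formal sums. First I would reduce to the unsigned case: by the definition of $W_1$ on signed diagrams, $W_1(\alpha,\beta) = W_1(\alpha^+ + \beta^-, \beta^+ + \alpha^-)$, and by linearity of $\alpha \mapsto \alpha(f)$ we have $F_\B(\alpha) - F_\B(\beta) = F_\B(\alpha^+ + \beta^-) - F_\B(\beta^+ + \alpha^-)$, so it suffices to prove $\|F_\B(\mu) - F_\B(\nu)\|_1 \le 2M\, W_1(\mu,\nu)$ for $\mu,\nu \in D_+(X,A)$. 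Fix an $\varepsilon > 0$ and a partial matching $\sigma$ of representatives $\mu = \sum_i x_i$, $\nu = \sum_i y_i$ (with infinitely many terms equal to $A$ on each side) whose cost is within $\varepsilon$ of $W_1(\mu,\nu)$.

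Next I would bound $\|F_\B(\mu)-F_\B(\nu)\|_1 = \sum_{f\in\B} |\mu(f) - \nu(f)|$ termwise. For each $f \in \B$, $|\mu(f) - \nu(f)| = \big|\sum_i (f(x_i) - f(y_{\sigma(i)}))\big| \le \sum_i |f(x_i) - f(y_{\sigma(i)})| \le \sum_i \mathrm{Lip}(f)\, d(x_i, y_{\sigma(i)})$, using that $f(A)=0$ so the terms matched to $A$ contribute $\mathrm{Lip}(f)\,d(x_i,A)$ consistently. Summing over $f \in \B$ and swapping the order of summation (everything is nonnegative, so Tonelli applies):
\begin{equation*}
\sum_{f\in\B} |\mu(f)-\nu(f)| \le \sum_i \Big( \sum_{f \in \B} \mathrm{Lip}(f)\, d(x_i, y_{\sigma(i)}) \Big).
\end{equation*}
Here is where the locally Lipschitz finite property enters: for each $i$, the inner sum is over those $f$ with $f(x_i) \ne 0$ or $f(y_{\sigma(i)}) \ne 0$, and $\sum_{f : f(p)\ne 0} \mathrm{Lip}(f) \le M$ for every point $p$, so $\sum_{f\in\B}\mathrm{Lip}(f)\,d(x_i,y_{\sigma(i)}) \le 2M\, d(x_i,y_{\sigma(i)})$ — the factor $2$ accounting for the two endpoints. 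Wait: more carefully, $|f(x_i)-f(y_{\sigma(i)})| \le \mathrm{Lip}(f)\min(d(x_i,y_{\sigma(i)}), d(x_i,A)+d(y_{\sigma(i)},A))$, and we want to bound by $\mathrm{Lip}(f)\,d(x_i,y_{\sigma(i)})$; summing over $f$ nonzero at $x_i$ gives $\le M\,d(x_i,y_{\sigma(i)})$, and over $f$ nonzero at $y_{\sigma(i)}$ gives another $\le M\,d(x_i,y_{\sigma(i)})$, but since each relevant $f$ is nonzero at at least one of the two points we get the bound $2M\,d(x_i,y_{\sigma(i)})$. Thus $\sum_{f\in\B}|\mu(f)-\nu(f)| \le 2M \sum_i d(x_i,y_{\sigma(i)}) = 2M\,\mathrm{Cost}(\sigma) \le 2M(W_1(\mu,\nu)+\varepsilon)$, and letting $\varepsilon \to 0$ gives the stability inequality.

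For injectivity, I would argue that if $F_\B(\alpha) = F_\B(\beta)$ then $\gamma := \alpha - \beta$ satisfies $\gamma(f) = 0$ for all $f \in \B$, hence $\gamma(f) = 0$ for all $f \in Lip_c(X,A)$ since $\B$ spans a dense subspace and $f \mapsto \gamma(f)$ is continuous in $\|\cdot\|_\infty$ on functionals of bounded support (or invoke that $\gamma$ lies in $\hat{\mathcal M}_1(X,A)$, the sequentially order continuous dual of $Lip_c(X,A)$, so it is determined by its values on a Schauder basis). Then a separation argument — for any two distinct points of $X$ not related by $A$ one can build a compactly supported Lipschitz bump function distinguishing them — forces $\gamma = 0$ in $D(X,A)$, i.e. $\alpha = \beta$. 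The main obstacle I anticipate is the bookkeeping around the matched-to-$A$ terms and making the order-swap rigorous (ensuring absolute convergence so Tonelli/Fubini is legitimate), together with cleanly justifying the factor of $2$; the core inequality chain itself is short, and the locally Lipschitz finite hypothesis is precisely engineered to make the double sum collapse.
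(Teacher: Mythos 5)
Your proposal is correct and follows essentially the same route as the paper: the stability bound comes from the same observation that only functionals nonzero at $x_i$ or $y_{\sigma(i)}$ contribute, each group summing to at most $M$ in Lipschitz constant (hence the factor $2M$), followed by summing over a partial matching and taking the infimum, and then the same reduction $W_1(\alpha,\beta)=W_1(\alpha^++\beta^-,\beta^++\alpha^-)$ for signed diagrams. Your injectivity argument likewise matches the paper's: construct a compactly supported Lipschitz bump separating the two diagrams, expand it in the Schauder basis, and conclude that some basis functional must evaluate differently (both arguments implicitly use that evaluation against a diagram commutes with the uniformly convergent basis expansion).
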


\begin{proof}

We first prove injectivity. Let $\alpha \in D(X,A)$. Note by definition of vectorization, $F_\B$ is linear on $D(X,A)$. Thus proving injectivity is equivalent to proving that if $F_\B(\alpha) =0$ then $\alpha =\emptyset = A$. 

Thus suppose that $\alpha \neq \emptyset$. Then $\alpha$ must include a point $x \in X \backslash A$ such that the multiplicity of $x$ in $\alpha$ is nonzero; $mult_\alpha(x) \neq 0$. 

Since $x \in X \backslash A$, and $A$ is closed,  there exists $\epsilon >0$ such that $B_{2\epsilon}(x) \cap A =\emptyset$. Let $C= \{x'  \in B_\epsilon(X)  | \ x'\neq x \ \text{ and }  \ mult_\alpha(x') \neq 0 \}$. By the triangle inequality, $d(x',A) >\epsilon $ for all $x' \in C$. Thus $|C|< \infty$ by supposition that $\alpha \in D(X,A)$. Hence, we may choose $\delta $ such that $0< \delta \leq \epsilon$ and  $B_\delta(x) \cap C = \emptyset$.

Define $g: (X,A) \rightarrow (\mathbb{R},0)$ to be the $1-$Lipschitz map $g(y) = \max\{0, \delta -d(x,y)\}$. 

By assumption that $\B$ is a Schauder Basis, there exists unique scalars $\{a_f\}_{f \in \B}$ such that $\sum\limits_{f \in \B} a_f \cdot f =g$. Note that $\alpha(g) = g(x) = \delta \cdot mult_\alpha(x)$. Therefore, $\sum\limits_{f \in \B}^\infty a_f \cdot \alpha(f) = \delta \cdot mult_\alpha(x)\neq 0$. Hence there must exist at least one function $f^* \in \B$ such that $ \alpha (f^*) \neq 0$. Hence $F_\B$ is injective.





    




    We now prove stability. We begin with single point pairings, then extend linearly. Let $x,y \in X$.  \\

Let $B_{x,y}= B_x \cup B_y$ be the set of all functionals $f \in \B$ which are nonzero on $x$ or $y$.  
    Then 
    \begin{equation*}
        \begin{split}
            \sum\limits_{f \in \mathbb{B}}|f(x)-f(y)| &= \sum\limits_{f \in B_{x,y}} |f(x)-f(y)|\\
            &\leq \sum\limits_{f \in B_{x,y}} d(x,y)\cdot Lip(f)\\
            &\leq 2M \cdot d(x,y)\\
        \end{split}
    \end{equation*}

    Now consider persistence diagrams of only positive terms,  $\alpha = \sum\limits_{i=0}^\infty x_i $ and $\beta = \sum\limits_{i=0}^\infty y_i$. Let $\sigma$ be a partial matching of $\alpha$ and $\beta$. 

    \begin{equation*}
        \begin{split}
            \sum\limits_{f \in \mathbb{B}} |\alpha(f)-\beta(f)| &= \sum\limits_{f \in \mathbb{B}} |\sum\limits_{i=0}^\infty f(x_i) -f(y_{\sigma(i)})|\\
            &\leq \sum\limits_{f \in \mathbb{B}} \sum\limits_{i=0}^\infty |f(x_i) -f(y_{\sigma(i)})|\\
            &= \sum\limits_{i=0}^\infty \sum\limits_{f \in \mathbb{B}} |f(x_i) -f(y_{\sigma(i)})|\\
            &\leq \sum\limits_{i=0}^\infty 2M\cdot d(x_i, y_{\sigma(i)})\\
            &= 2M\cdot Cost(\sigma)\\
        \end{split}
    \end{equation*}

    Taking infemum over all partial matchings $\sigma $ gives us the stability inequality 

    $$||F_\mathbb{B}(\alpha) - F_\mathbb{B}(\beta)||_1 \leq 2M \cdot  W_1(\alpha, \beta)$$\\

    Finally, consider the case that $\alpha = (\alpha^+, \alpha^-)$ and $\beta=(\beta^+, \beta^-)$ might have negative terms. Then 

    \begin{equation*}
        \begin{split}
            ||F_\B(\alpha) - F_\B(\beta)||_1 &= ||(F_\B(\alpha^+) - F_\B(\alpha^-)) -(F_\B(\beta^+) - F_\B(\beta^-))||_1\\
            &= ||F_\B(\alpha^+ + \beta^-) - F_\B(\beta^++ \alpha^-)||_1 \\
            &\leq 2M\cdot W_1(\alpha^+ + \beta^-, \beta^+ + \alpha^-)\\
            &= 2M \cdot W_1(\alpha, \beta)
        \end{split}
    \end{equation*}

\end{proof}

\begin{corollary}
    Considering the case that $\beta = \emptyset$, and using the fact that $\alpha \in D(X,A)$, we determine that if $\B$ meets the locally Lipschitz finite property, then $F_\B$ is an embedding of $D_{\pm} (X,A)$ into sequence space $\ell_1$. 
\end{corollary}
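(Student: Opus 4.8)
The plan is to read this off directly from the Stability Theorem (Theorem~\ref{thrm: schauder stability}) once we verify that $F_\B$ actually takes values in $\ell_1$; that corestriction of the codomain is the only genuinely new assertion, since $F_\B$ was a priori defined only as a map into $\mathbb{R}^\omega$.

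First I would specialize the stability inequality to $\beta = \emptyset$, the class of $A$, which is the zero element of $D(X,A)$. Since $F_\B$ is linear, $F_\B(\emptyset) = 0$, so the inequality reads $||F_\B(\alpha)||_1 \le 2M\,W_1(\alpha,\emptyset)$. By the definition of the $1$-Wasserstein distance on signed diagrams, $W_1(\alpha,\emptyset) = W_1(\alpha^+,\alpha^-) = ||\alpha||$. Now I would invoke the defining condition of $D(X,A)$: $\alpha$ has a representative $\sum_i \pm x_i$ with $\sum_i d(x_i,A) < \infty$, and the triangle-inequality estimate recorded right after the definition of $\Cost$ bounds the cost of matching $\alpha^+$ against $\alpha^-$ through $A$ by $\sum_{x\in\alpha^+} d(x,A) + \sum_{x\in\alpha^-} d(x,A) = \sum_i d(x_i,A) < \infty$. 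Hence $||\alpha|| < \infty$, so $||F_\B(\alpha)||_1 < \infty$ and $F_\B$ corestricts to a map $D_\pm(X,A) \to \ell_1$.

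With the codomain settled, the rest is immediate from Theorem~\ref{thrm: schauder stability}: injectivity is its first half, and the bound $||F_\B(\alpha) - F_\B(\beta)||_1 \le 2M\,W_1(\alpha,\beta)$, now read between the metric spaces $(D_\pm(X,A), W_1)$ and $(\ell_1, ||\cdot||_1)$, exhibits $F_\B$ as a $2M$-Lipschitz, hence continuous, injection. An injective, stable map of this form is the notion of embedding in play here, as with the $1$-parameter vectorizations cited in the introduction, so $F_\B$ is an embedding of $D_\pm(X,A)$ into $\ell_1$.

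I do not anticipate a real obstacle; the argument is bookkeeping on top of the already-proven stability bound. The one point requiring attention is that ``$\alpha \in D(X,A)$'' — rather than merely ``$\alpha$ a formal signed sum of points of $X$'' — must genuinely be used in the finiteness step, since without the finite-cumulative-distance condition $(\alpha(f))_{f\in\B}$ need not be summable. If one wanted ``embedding'' in the stronger sense of a homeomorphism onto the image, the reverse comparison $W_1(\alpha,\beta) \lesssim ||F_\B(\alpha)-F_\B(\beta)||_1$ is not supplied here; it is not needed for the corollary as stated, and a sharp two-sided estimate for a particular choice of $\B$ is the subject of Theorem~\ref{thrm:optimal bound}.
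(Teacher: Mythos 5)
Your proposal is correct and follows exactly the route the paper intends: the corollary's statement is itself the proof sketch (set $\beta=\emptyset$ in Theorem \ref{thrm: schauder stability}, use $\alpha\in D(X,A)$ to get $W_1(\alpha,\emptyset)=\norm{\alpha}<\infty$ and hence $F_\B(\alpha)\in\ell_1$, then cite injectivity and the Lipschitz bound). Your explicit verification that the finite-cumulative-distance condition is what makes the corestriction to $\ell_1$ legitimate is the one substantive step, and you handle it correctly.
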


 Theorem \ref{thrm: schauder stability} applies to any Schauder basis meeting the locally lipschitz finite property. By Lemma \ref{lemma: LLF}, this  includes  Schauder bases defined using  nested triangulations as in Definition \ref{def:kernel}, which meets the l.l.f. property with $M= L \cdot(d+1)$. In the case that this nested triangulation is a nested Coxeter-Freudenthal Kuhn triangulation (as in Example \ref{example:CFK triangulation} and Example \ref{example: CFK triangulation R3}), the stability bound of Theorem \ref{thrm: schauder stability} is not optimal. We will illustrate the optimal bound for this setting in the following. \\






\begin{lemma}\label{lemma: CFK Lipschitzness}
    Let $d>1$ and $(X,A)$ be a polyhedral pair in $\mathbb{R}^d$ defined by inequatlities of the form $x_i \leq x_j$ for some pairings of $i \leq j$. Let $(T,S)  $ be the  Coxeter-Freudenthal-Kuhn triangulation at scale $c$ on $(X,A)$. Let $V$ be the vertices of $T$ that are not in $S$, and $B = \{\K_v\}_{v \in V}$ be the collection of all p.l. functionals on $(T,S)$ as defined in definition \ref{def:kernel}, with common Lipschitz constant $L>0$ for all $v \in V$. Let $V'$ be any subset of the vertex set $V$ of $T$, and let $f = \sum\limits_{v \in V' } \K_v$. Then $Lip(f) \leq \sqrt{\frac{d}{2}} \cdot L$. 
\end{lemma}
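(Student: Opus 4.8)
The plan is to reduce $Lip(f)$ to a supremum of gradient norms over the top-dimensional simplices of $T$ and then compute those gradients explicitly, using that each $\K_v$ is supported on the simplices around $v$ and that every $d$-simplex of a CFK triangulation is congruent to the (Euclidean) scaled standard simplex. First I would reduce to a single simplex: since $X=\{x : x_i\le x_j,\ (i,j)\in\Upsilon\}$ is an intersection of half-spaces it is convex, so for $x,y\in X$ the segment $[x,y]$ lies in $X$ and, being compact, meets only finitely many simplices of $T$ (Remark \ref{Remark: finite simplices}). Cutting $[x,y]$ at the simplex boundaries and using that $f$ is piecewise linear on $T$ (Lemma \ref{lem:linear sum}), one gets $|f(x)-f(y)|\le\big(\sup_\sigma Lip(f|_\sigma)\big)|x-y|$ with the supremum over $d$-simplices $\sigma\in T$; hence $Lip(f)=\sup_\sigma Lip(f|_\sigma)$ and it suffices to bound $Lip(f|_\sigma)$ for one $d$-simplex $\sigma$.

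Fix such a $\sigma$. By the combinatorial description of CFK simplices, after a translation and a permutation of coordinates we may take $\sigma=\langle w_0,\dots,w_d\rangle$ with $w_i=w_0+c(e_1+\cdots+e_i)$. Since $\K_v$ vanishes at every vertex of $T$ other than $v$, the only elements of $B$ not identically zero on $\sigma$ are $\K_{w_0},\dots,\K_{w_d}$, so on $\sigma$ one has the affine function $f=\sum_{i:\,w_i\in V'}\K_{w_i}$ with $f(w_i)=b_i$, where $b_i=\K_{w_i}(w_i)$ if $w_i\in V'$ and $b_i=0$ otherwise. A short barycentric-coordinate computation then gives the constant gradient $\nabla(f|_\sigma)=\tfrac1c(b_1-b_0,\dots,b_d-b_{d-1})$, so $Lip(f|_\sigma)=\tfrac1c\big(\sum_{k=1}^d(b_k-b_{k-1})^2\big)^{1/2}$. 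To bound the $b_i$: by Proposition \ref{prop:kernels exist}, $\K_v(v)=L\cdot\min_{\tau\ni v}d(v,H_\tau^v)$ over $d$-simplices $\tau\in T$ containing $v$, with $H_\tau^v$ the hyperplane of the facet of $\tau$ opposite $v$; and since each $\tau$ is congruent to the scaled standard simplex, that distance equals $c$ when $v$ is an end vertex of $\tau$ and $c/\sqrt2$ when $v$ is an interior vertex of $\tau$ (the $d=2$ instance of this is exactly Example \ref{example:CFK kernels}). Granting that every $v\in V$ occurs at an interior position of some $\tau\in T$ — which is possible because $d>1$ — we get $\K_v(v)\le Lc/\sqrt2$, hence $b_i\in[0,Lc/\sqrt2]$, hence $|b_k-b_{k-1}|\le Lc/\sqrt2$ for each $k$, and therefore $Lip(f|_\sigma)\le\tfrac1c\big(d(Lc/\sqrt2)^2\big)^{1/2}=\sqrt{d/2}\,L$. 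With the first paragraph this gives $Lip(f)\le\sqrt{d/2}\,L$.

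The one step requiring care is the claim that each $v\in V$ sits at an interior position of some $d$-simplex of $T$ (not merely of the ambient CFK triangulation of $\mathbb{R}^d$). I would establish this locally at $v$: letting $J\subseteq\Upsilon$ be the set of relations tight at $v$, we have $J\cap\Upsilon'=\emptyset$ since $v\notin A$, and every slack relation of $\Upsilon$ has slack a positive multiple of $c$, hence at least $c$. Choosing $\pi(1)$ to be a minimal index of the poset generated by $J$ (one that is not the larger coordinate of any relation in $J$) and $\pi(2),\dots,\pi(d)$ a linear extension of the remaining indices that lists the larger coordinate of each relation of $J$ before the smaller one, the CFK simplex $\langle v-ce_{\pi(1)},\,v,\,v+ce_{\pi(2)},\,v+ce_{\pi(2)}+ce_{\pi(3)},\,\dots\rangle$ has all of its vertices in $X$ (tight relations are preserved step by coordinate step, and a single $c$-step cannot destroy a slack relation), so it lies in $X$ by convexity and hence belongs to $T$; in it $v$ occupies position $1$, which is interior since $d>1$. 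Tracking this feasibility bookkeeping against $J$ is the only slightly delicate point in the argument; Steps reducing to one simplex and computing the gradient are otherwise routine.
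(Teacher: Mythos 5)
Your proof is correct and follows essentially the same route as the paper's: reduce to a single $d$-simplex by convexity, use the congruence of CFK simplices to the scaled standard simplex to get the vertex-to-opposite-facet distances $c$ and $c/\sqrt2$, verify that every $v\in V$ sits at an interior position of some $d$-simplex of $T$ so that $\K_v(v)\le Lc/\sqrt2$, and bound the telescoping gradient coordinatewise to obtain $\sqrt{d/2}\cdot L$. The only substantive difference is your handling of that interior-position step: your combinatorial choice of $\pi$ adapted to the relations of $\Upsilon$ that are tight at $v$ is more careful than the paper's argument, which translates the standard simplex by $-e_1$ and $+e_d$ and checks the defining inequalities directly.
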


\begin{proof}
    
    By scaling, we may assume $c=1$. We prove this lemma by restricting to a single simplex $\sigma$ of $T$. Since $X$ is convex and contains all its geodesics, an upper bound on the Lipschitz constant of $Lip(f|_\sigma)$ induces an upper bound of $Lip(f)$.  Since each $d-$simplex of $T$ is congruent to the standard $d-$simplex, we may assume without loss of generality that $\sigma = \langle v_0, v_1,...v_d\rangle$ where $v_i = \sum\limits_{j=1}^i e_i$ for $i >0$, and $v_0 =0$.\\

    We will first calculate the distance of each vertex to its opposite hyperplane.

    The face of  $\sigma$ opposite $v_0$ lies in the hyperplane $H_0$ spanned by vectors $\{v_i-v_1 = \sum\limits_{i=2}^k e_i\}_{k =2}^d$, which has normal vector $N_0:=e_1$. Then since $e_1$ lies on $H_0$, the distance of the vertex $v_0$ to $H_0$ is given by $$d(v_0, H_0) = \frac{|N_0 \cdot (v_0 -e_1)|}{||N_0||}=1$$

    For $0 <p<d$, the face opposite $v_p$ lies in the hyperplanes $H_p $ spanned by vectors $\{v_i= \sum\limits_{j=1}^i e_j \}_{i \neq p}$, which has normal vector $N_p: = e_{p+1}-e_p$. Thus since $v_0=0$ lies on $H_p$,  the  vertex $v_p = \sum\limits_{i=1}^p e_p$ has distance to $H_p$ given by 
    $$d(v_p, H_p) = \frac{|N_p \cdot (v_p-0)|}{||N_p||}= \frac{1}{\sqrt{2}}$$

    The face opposite $v_d$ lies in the hyperplane $H_d$ spanned by vectors $\{v_i = \sum\limits_{j=1}^i e_j\}_{i <d}$. $H_d$ has normal vector $N_d := -e_d$. Then since $v_0=0$ lies in $H_d$, the distance of $v_d$ to $H_d$ is given by $$d(v_d, H_d) = \frac{|N_d \cdot (v_d - 0)|}{||N_d||}=1$$ 

    Therefore, the minimal distance of a vertex to the hyperplane defined by its opposite face is given by $\frac{1}{\sqrt{2}}$. \\

    Recall that to define functionals $\K_{v_0}$ and $\K_{v_d}$, we take the minimal distance of the vertices $v_0$ and $v_d$ to their opposite face hyperplanes over all simplices containing each vertex respectively. \\

   We show now that there exists simplices $\sigma^0, \sigma^d$ in $T$ containing $v_0$ and $v_d$  respectively,  such that the distance from $v_0,v_d$ to their opposite hyperplanes in $\sigma^0 , \sigma^d$ is  $\frac{1}{\sqrt{2}}$.\\

    For $v_0=0$, consider the simplex defined by the translation of the previous $\sigma$ by the vector $-e_1$;  $\sigma^0 : = \langle -e_1, 0, e_2, e_2+e_3, ...v_d-e_1\rangle$. Let $h: \sigma \mapsto \sigma^0$ be the translation map. Consider an inequality $x_i \leq x_j$ defining the polyhedron $X$. If $i \neq 1$, then for any point $x \in \sigma$, $h(x)_i = x_i \leq x_j = h(x)_j$. If $i=1$, then $h(x)_i = x_i-1 < x_i \leq x_j = h(x)_j$. Therefore, the simplex $\sigma^0 \in T$. Furthermore, the vertex $0$ has opposite face lying in the hyperplane $H_0'$ spanned $\{\sum\limits_{j=1}^i e_i \}_{i=2}^d$. Therefore, a normal vector to the hyperplane $H_0'$ is $N_0' := e_1-e_2$. Since $-e_1$ is on $H_0'$, we may then determine that the distance of $v_0=0$ to the hyperplane $H_0'$ is given by 
    $$d(0, H'_0) = \frac{|N_0' \cdot (0+e_1)|}{||N_0'||}= \frac{1}{\sqrt{2}}$$

    Similarly, we may translate $\sigma $ by $e_d$ to find a simplex $\sigma^d$ containing the vertex $v_d= \sum\limits_{i=0}^d e_i$, such that the distance of $v_d$ to the hyperplane containing its opposite face in $\sigma^d$ is $\frac{1}{\sqrt{2}}$. \\

    Therefore, all vertices of $\sigma $ have equal minimum distance to a hyperplane containing their opposite faces in the triangulation $T$. Thus, since all $\K_{v_i}$ are specified to have the same Lipschitz constant $L$, using the proof of Proposition \ref{prop:kernels exist}, the value $\K_{v_i}(v_i) = \frac{L}{\sqrt{2}}$ for all $i$. \\

    Furthermore, we obtain the gradients of each functional by the following. For each $i$, let $\bar{\K}_{v_i}: X \rightarrow \mathbb{R}$ be the linear extension of $\K_{v_i} |_{\sigma}$. Then  $\nabla(\bar{\K}_{v_0})= \frac{L}{\sqrt{2}} (-e_1)$. Additionally, for $0 <p<d$,  $\nabla(\bar{\K}_{v_i}) = \frac{L}{\sqrt{2}}\cdot (e_{p+1}-e_p)$ , and   $\nabla(\bar{\K}_{v_d}) = \frac{L}{\sqrt{2}} e_d$. \\

    Note that $\sum\limits_{p=0}^d \nabla (\bar{\K}_{p})=0$. \\

    Let $I \subset \{0,1,...d\}$, and $f = \sum\limits_{i \in I} \bar{\K}_{v_i}$. \\
    

    Fix $i$ and first consider the case that  $i,i+1 \in I$. Then the $i+1$th coordinate of $\nabla(f) $ is $0$. However, if $i \in I$ and $ i+1 \notin I$, then the $i+1$th coordinate of $\nabla(f)$ is $\frac{L}{\sqrt{2}}$. Lastly, if $i \notin I$ and $i+1 \in I$, then the $i+1$th coordinate of $\nabla(f)$ is $-\frac{L}{\sqrt{2}}$. \\

    Thus, in the worst case scenario, all $d$ components of $\nabla(f)$ are $\pm \frac{L}{\sqrt{2}}$.
    
    Thus $||\nabla(f) || \leq \sqrt{d \cdot \left(\frac{L}{\sqrt{2}}\right)^2}= \sqrt{\frac{d}{2}}\cdot L$. 

    Therefore, using the geodesics of the polyhedron $X$, we may extend this result over all simplices of $T$, and conclude that for a subset $V' \subset V$,

    $$Lip\left(\sum\limits_{v \in V'} \K_{v'}\right) \leq \sqrt{\frac{d}{2}  } \cdot L$$

\end{proof}

We now use this lemma to prove an optimal stability result for the 1-Wasserstein distance on persistence diagrams.

\begin{theorem}[CFK-Stability]\label{thrm:optimal bound}
    Let $\Upsilon \subset \{ (i,j) | \ 1 \leq i < j \leq d\}$ be a set of relations on indices of $\mathbb{R}^d$. Let $X \subset \mathbb{R}^d$ be the subset of $\mathbb{R}^d$ such that $X = \{x \ | \ x_i \leq x_j \ \forall (i,j) \in \Upsilon\}$.  Furthermore, let  $\Upsilon ' \subset \Upsilon$ be a nonempty subset of relations. Let $A = \{x \in \mathbb{R}^d \ | \exists (i,j) \in \Upsilon ' \ s.t.  \ x_i = x_j \}$. Let ${(T^n,S^n)}_{n=0}^\infty$ be the nested triangulation, where $(T^n, S^n)$ is the Coxeter-Freudenthal-Kuhn triangulation, scaled by $\frac{1}{z^n}$ for some integer $z>1$. Let $\mathbb{B}$ be the Schauder basis of $Lip_c(X,A)$ of functionals as in Definition \ref{thrm: Schauder Basis}, with Lipschitz constants $(L_n)_{n=0}^\infty$, $\sum_{n=0}^\infty L_n =L$. Then for any two persistence diagrams $\alpha, \beta \in D(X,A)$, 
    $$||F_\B(\alpha) - F_\B(\beta)||_1 \leq \sqrt{2d}\cdot L \cdot  W_1(\alpha, \beta)$$

    Furthermore, this bound is optimal for all choices of $\{L_n\}_{n=0}^\infty$ and dimensions $d$. That is, for all $\epsilon >0$, there exists $d>0$ and a choice of $\{L_n\}_{n=0}^\infty $ , and persistence diagrams $\alpha, \beta \in D_{\pm}(X,A)$ such that $$||F_\B(\alpha) - F_\B(\beta)||_1 >W_1(\alpha, \beta) - \epsilon  $$

    \end{theorem}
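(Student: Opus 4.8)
The plan is to sharpen the single-point estimate behind Theorem~\ref{thrm: schauder stability}: rather than bounding $\sum_{f\in\B}\lvert f(x)-f(y)\rvert$ by $\sum_f\operatorname{Lip}(f)\,d(x,y)$ term by term, I would integrate the derivatives of the kernels along the segment from $x$ to $y$ and use the explicit gradient structure of CFK kernels recorded in the proof of Lemma~\ref{lemma: CFK Lipschitzness}. First I would show that for all $x,y\in X$,
\[
\sum_{f\in\B}\lvert f(x)-f(y)\rvert\le\sqrt{2d}\,L\,d(x,y).
\]
Since $X$ is convex the segment $\gamma(t)=(1-t)x+ty$ lies in $X$; each $\K_v$ is piecewise linear, so $\lvert\K_v(x)-\K_v(y)\rvert\le\int_0^1\lvert\nabla\K_v(\gamma(t))\cdot(y-x)\rvert\,dt$. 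Summing over $v\in V^{(n)}$ (a locally finite family, so the sum passes inside the integral) and setting $u=(y-x)/d(x,y)$: for a.e.\ $t$ the point $\gamma(t)$ lies interior to some $d$-simplex $\tau\in T^n$, the only layer-$n$ kernels with nonzero gradient at $\gamma(t)$ are the tent functions at the non-$A$ vertices of $\tau$, and the gradient computation of Lemma~\ref{lemma: CFK Lipschitzness} (now at scale $z^{-n}$ with common Lipschitz constant $L_n$) gives, for the permutation $\pi$ recording the orientation of $\tau$,
\[
\sum_{v\in V^{(n)}}\lvert\nabla\K_v(\gamma(t))\cdot u\rvert\le\frac{L_n}{\sqrt2}\Bigl(\lvert u_{\pi(1)}\rvert+\sum_{k=1}^{d-1}\lvert u_{\pi(k+1)}-u_{\pi(k)}\rvert+\lvert u_{\pi(d)}\rvert\Bigr).
\]

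This last quantity is the total variation of the path $0,u_{\pi(1)},\dots,u_{\pi(d)},0$; since it equals $\max_{s\in\{\pm1\}^{d+1}}\bigl\lVert s_0e_1+\sum_{p=1}^{d-1}s_p(e_{p+1}-e_p)+s_de_d\bigr\rVert_2$ and each of the $d$ coordinates of that vector lies in $\{-2,0,2\}$ and can be made $\pm2$ simultaneously, it is at most $2\sqrt d$ for every unit vector $u$. Hence the integrand is $\le\sqrt{2d}\,L_n\,d(x,y)$; integrating and summing over $n$ yields the displayed single-point bound. The passage to diagrams---match representatives, apply the triangle inequality and Tonelli, take the infimum over matchings, and reduce signed diagrams via $W_1(\alpha,\beta)=W_1(\alpha^++\beta^-,\beta^++\alpha^-)$---then reproduces verbatim the proof of Theorem~\ref{thrm: schauder stability} and gives $\lVert F_\B(\alpha)-F_\B(\beta)\rVert_1\le\sqrt{2d}\,L\,W_1(\alpha,\beta)$.

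For optimality I would exhibit single-point diagrams nearly saturating the bound. Fix $d\ge2$, put $v_0=(0,2,4,\dots,2(d-1))$, and let $\tau\in T^0$ be the standard-orientation CFK cell $v_0+\operatorname{conv}(0,e_1,e_1+e_2,\dots,e_1+\cdots+e_d)$; every defining inequality of $X$ holds on $\tau$ with slack at least $1$, so $\tau$ is a genuine $d$-simplex of $T^0$ with $d(\tau,A)>0$ and all its vertices lie in $V^{(0)}$. Let $w=\tfrac1{\sqrt d}(1,-1,1,-1,\dots)$, let $y$ be the barycentre of $\tau$ and $x=y+rw$ with $r>0$ small enough that $x\in\operatorname{int}(\tau)$ and $r<d(x,A)+d(y,A)$, and set $\alpha=\delta_x$, $\beta=\delta_y\in D_{\pm}(X,A)$. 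Then $W_1(\alpha,\beta)=\min\bigl(d(x,y),d(x,A)+d(y,A)\bigr)=r$. On $\tau$ each kernel at a vertex of $\tau$ is affine, so $\K_v(x)-\K_v(y)=\nabla\K_v\cdot(x-y)$ with no sign cancellation, and the gradient formula of Lemma~\ref{lemma: CFK Lipschitzness} together with the choice of $w$ makes the layer-$0$ part of $\lVert F_\B(\alpha)-F_\B(\beta)\rVert_1$ equal to $\tfrac{L_0}{\sqrt2}\cdot2\sqrt d\cdot r=\sqrt{2d}\,L_0\,r$, while every higher layer contributes nonnegatively; thus $\lVert F_\B(\alpha)-F_\B(\beta)\rVert_1\ge\sqrt{2d}\,L_0\,W_1(\alpha,\beta)$. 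Since one may take $L_0=(1-\eta)L$ and $L_n=\eta L\,2^{-n}$ for $n\ge1$, letting $\eta\to0$ shows $\sqrt{2d}\,L$ cannot be lowered; specializing to $L=1$, $\eta=\tfrac12$ gives $\lVert F_\B(\alpha)-F_\B(\beta)\rVert_1\ge W_1(\alpha,\beta)>W_1(\alpha,\beta)-\epsilon$ for every $\epsilon>0$.

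The main obstacle is the bookkeeping in the upper bound: confirming that inside each simplex of $T^n$ the nonzero layer-$n$ kernels are exactly the tent functions at the non-$A$ vertices, with gradients of the $\tfrac{L_n}{\sqrt2}(e_{\pi(k+1)}-e_{\pi(k)})$-type dictated by Lemma~\ref{lemma: CFK Lipschitzness}, so that the absolute-value-sum over vertices is controlled by the $2\sqrt d$ path-variation estimate; the degenerate case where $\gamma$ lies in a proper face of the triangulation is handled by running the same argument in the lower-dimensional face, which only improves the constant. The duality estimate, the Fubini/Tonelli step, the extension to diagrams, and the optimality construction are all routine once this is in place.
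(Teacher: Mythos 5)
Your proposal is correct and follows essentially the same route as the paper: the paper also reduces to a single-point estimate governed by the CFK gradient structure of Lemma~\ref{lemma: CFK Lipschitzness} (it splits the layer-$n$ kernels by the sign of $\K_v(x)-\K_v(y)$ and applies that lemma to the two partial sums, rather than integrating $\sum_v\lvert\nabla\K_v\cdot u\rvert$ along the segment, but the underlying $2\sqrt{d}$ alternating-sign computation is identical), and its optimality example is likewise a pair of Dirac masses in a single top-dimensional cell of $T^0$ displaced along the alternating direction, with $L_0=L-\epsilon$ carrying almost all of the Lipschitz budget. The only cosmetic differences are your explicit verification that the chosen cell avoids $A$ and your handling of the measure-zero degenerate segments, neither of which changes the argument.
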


\begin{proof}
As before, we begin with a proof for persistence diagrams consisting of at most a single point. \\

Let $x,y \in X$ and $\alpha = x$, $\beta =y$. Then 

\begin{equation*}
    \begin{split}
        ||F_\B(\alpha) - F_\B(\beta)||_1 &= \sum\limits_{\K_v \in \B} |\K_v(x)  - \K_v(y)|\\
        &= \sum\limits_{n=0}^\infty \sum\limits_{v \in V^{(n)}} |\K_v(x) - \K_v(y)|\\
    \end{split}
\end{equation*}

For each $n$, let $A_n= \{v \in V^{(n)} \ | \ \K_v(x) \geq \K_v(y)\}$ and let $B_n= \{v \in V^{(n)} \ | \ \K_v(x) < \K_v(y)\}$. Then the above becomes, \\

\begin{equation*}
    \begin{split}
        \sum\limits_{n=0}^\infty \sum\limits_{v \in V^{(n)}} |\K_v(x) - \K_v(y)| &= \sum\limits_{n=0}^\infty\sum\limits_{v \in A_n } (\K_v(x) - \K_v(y)) + \sum\limits_{v \in B_n} (\K_v(y) - \K_v(x))\\
        &= \sum\limits_{n=0}^\infty \left(\left(\sum\limits_{v \in A_n} \K_v\right)(x) - \left(\sum\limits_{v \in A_n} \K_v\right)(y)\right) + \left(\left(\sum\limits_{v \in B_n} \K_v\right)(y) - \left(\sum\limits_{v \in B_n} \K_v\right)(x)\right)\\
        &\leq \sum\limits_{n=0}^\infty 2 \cdot\sqrt{\frac{d}{2}}\cdot  L_n \cdot d(x,y) \hspace{1cm} \textbf{ (by Lemma \ref{lemma: CFK Lipschitzness})}\\
        &= \sqrt{2d}\cdot L \cdot d(x,y)
    \end{split}
\end{equation*}
\vspace{0.5cm}

Now let us extend to other nonnegative persistence diagrams, $\alpha = \sum\limits_{i=0}^\infty x_i, \beta = \sum\limits_{i=0}^\infty y_i \in D_+(X,A)$. Let $\sigma$ be a partial matching of $\alpha $ and $\beta$. \\

\begin{equation*}
    \begin{split}
        ||F(\alpha) - F(\beta)||_1 &= \sum\limits_{n=0}^\infty \sum\limits_{v \in V^{(n)} } |\alpha(\K_v)- \beta(\K_v)| \\
        &=  \sum\limits_{n=0}^\infty \sum\limits_{v \in V^{(n)} } | \sum\limits_{i=0}^\infty \K_v(x_i) - \K_v(y_{\sigma(i)})| \\
        &\leq \sum\limits_{i=0}^\infty \sum\limits_{n=0}^\infty \sum\limits_{v \in V^{(n)} } |\K_v(x_i) - \K_v(y_{\sigma(i)})| \\
        &\leq \sum\limits_{i=0}^\infty \sqrt{2d} L \cdot d(x_i, y_{\sigma(i)}) \\
        &= \sqrt{2d}L\cdot Cost(\sigma)
    \end{split}
\end{equation*}

Taking infemum over all partial matchings, this gives us the desired inequality

$$||F(\alpha) - F(\beta)||_1 \leq \sqrt{2d}\cdot L \cdot W_1(\alpha, \beta)$$\\

Finally, consider the case that $\alpha = (\alpha^+, \alpha^-)$ and $\beta=(\beta^+, \beta^-)$ might have negative terms. Then 

    \begin{equation*}
        \begin{split}
            ||F_\B(\alpha) - F_\B(\beta)||_1 &= ||F_\B(\alpha^+) - F_\B(\alpha^-) -F_\B(\beta^+) + F_\B(\beta^-)||_1\\
            &= ||F_\B(\alpha^+ + \beta^-) - F_\B(\beta^++ \alpha^-)||_1 \\
            &\leq \sqrt{2d} \cdot L \cdot W_1(\alpha^+ + \beta^-, \beta^+ + \alpha^-)\\
            &= \sqrt{2d}  L \cdot W_1(\alpha, \beta)
        \end{split}
    \end{equation*}\\

We now show this bound is the optimal bound to hold for all choices of Lipschitz constants for any dimension $d$. \\

Let $d$ be even. Let $\epsilon >0$ and $L>\epsilon$. Define $L_0 = L-\epsilon$ and $L_n = \frac{\epsilon}{2^n}$ for all $n >0$. 

Let $\sigma \in T^0$ be a simplex such that all vertices of $\sigma$ are not in $A$. For simplicity of notation, we will denote $\sigma = \langle v_0, v_1, ...v_d\rangle$ where $v_0=0$ and $v_p = \sum\limits_{i=0}^p e_i$ for all $p >0$, where $v_p \in V^{(0)}$ for all $p$. 

By proof of Lemma \ref{lemma: CFK Lipschitzness}, we know that the gradient vector of $\K_{v_p}|_\sigma$ is given by $\frac{L_0}{\sqrt{2}}N_p$ where $N_0=e_1, $  $N_p = e_{p+1}-e_p$ for $1\leq p <d$, and $N_d = -e_d$. 

Choose $x,y \in \sigma$ such that the vector $y-x$ is a positive scalar multiple of the vector $N_{odd} := \sum\limits_{p \ \text{ odd}} N_p $. 
Since $\sum\limits_{p=0}^d \frac{L_0}{\sqrt{2}}N_p =0$, this implies that $x-y$ is a positive scalar multiple of $N_{even} := \frac{L_0}{\sqrt{2}}\sum\limits_{p \ \text{ even}} N_p$. Note then that $\K_{v_p}(y) \geq \K_{v_p}(x) $ iff $p$ is odd. Furthermore, $Lip\left( \sum\limits_{i \ odd} \K_{v_p}|_{\sigma}\right)= ||N_{odd}||= \sqrt{\frac{d}{2}} L_0$.  Therefore, 

\begin{equation*}
    \begin{split}
        \sum\limits_{i=0}^d |\K_{v_i}(y) - \K_{v_i}(x)|&= \left( \sum\limits_{i \ odd} \K_{v_i}(y) - \K_{v_i}(x)\right) + \left(  \sum\limits_{i \ even} \K_{v_i}(x) - \K_{v_i}(y)\right)\\
        &= 2 \cdot \sqrt{\frac{d}{2}} L_0 \cdot d(x, y)\\
        &= \sqrt{2d} L_0 \cdot d(x, y)
    \end{split}
\end{equation*}

Chossing $\alpha =x$ and $\beta=y$ then gives us that 

\begin{equation*}
    \begin{split}
        ||F_\B(\alpha) - F_\B(\beta)||_1 &=  \sum\limits_{n=0}^\infty \sum\limits_{v \in V^{(n)}} |\alpha(\K_v) - \beta(\K_v)|\\
        & \geq\sum\limits_{v \in V^{(0)}} |\K_{v}(x) - \K_{v}(y)|\\
        &= \sqrt{2d} L_0 \cdot W_1(\alpha, \beta) \\
        &= \sqrt{2d}L \cdot W_1(\alpha, \beta) - \sqrt{2d} \cdot \epsilon \cdot W_1(\alpha , \beta)
    \end{split}
\end{equation*}

\end{proof}
\begin{corollary}

    Fix some $N \geq 0$. Let $F_\mathbb{B}^N$ be the vectorization defined   by the restriction of the vectorization $F_\mathbb{B}$ to p.l. functionals  centered at vertices in $ \bigcup\limits_{n=0}^N V^{(n)}$ of a nested triangulation on polyhedral pair $(X,A)$. Denote the Lipschitz constants of functionals at each layer by $\{L_n\}_{n=0}^\infty$. Then for any persistence diagram, $||F_\B(\alpha ) - F_\B^N(\alpha)||_1 \leq (d+1) \sum\limits_{n=N+1}^\infty  L_n\cdot W_1(\alpha, \emptyset)$. In the case that the triangulation is a CFK triangulation,   $||F_\B(\alpha ) - F_\B^N(\alpha)||_1 \leq \sqrt{2d} \sum\limits_{n=N+1}^\infty  L_n\cdot W_1(\alpha, \emptyset)$. That is, the vectorization map $F_\B$ may be approximated to arbitrary accuracy using a finite number of layers of the nested triangulation.\\ 
\end{corollary}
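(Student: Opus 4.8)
The plan is to recognize the error term explicitly as a vectorization by a sub-family of $\B$. Write $\B^{N}$ for the functionals $\K_v$ of $\B$ centered at vertices $v\in\bigcup_{n=0}^{N}V^{(n)}$, and $\B^{>N}:=\B\setminus\B^{N}=\{\K_v:v\in V^{(n)},\,n>N\}$ for the remaining ``tail'' functionals. Then $F_\B(\alpha)-F_\B^{N}(\alpha)=\big(\alpha(f)\big)_{f\in\B^{>N}}$, so $\|F_\B(\alpha)-F_\B^{N}(\alpha)\|_1=\sum_{f\in\B^{>N}}|\alpha(f)|$, and the task reduces to bounding this sum by a constant times $W_1(\alpha,\emptyset)$. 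This is exactly the content of the stability estimate of Theorem \ref{thrm: schauder stability} (or Theorem \ref{thrm:optimal bound} in the CFK case) applied with the family $\B^{>N}$ in place of $\B$ and with $\beta=\emptyset$. What makes this legitimate is that the proofs of those theorems never invoke the fact that $\B$ spans $Lip_c(X,A)$, nor the uniqueness of coefficient sequences; they use only that the family is locally Lipschitz finite and that each member vanishes on $A$. So although $\B^{>N}$ is not a Schauder basis, those estimates apply to it verbatim.

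First I would compute the relevant local Lipschitz constant of $\B^{>N}$. Rerunning the count in the proof of Lemma \ref{lemma: LLF}, but summing only over layers $n>N$, shows that at each $x\in X$ at most $d+1$ functionals of each such layer are nonzero at $x$, so $\sum_{f\in\B^{>N},\,f(x)\neq 0}Lip(f)\le(d+1)\sum_{n=N+1}^{\infty}L_n$, which is the general bound. In the CFK case one does better by working one layer at a time: for fixed $n$ the functionals $\K_v$ with $v\in V^{(n)}$ are $n$-linear with common Lipschitz constant $L_n$ on the CFK triangulation $T^n$, so by Lemma \ref{lemma: CFK Lipschitzness} any partial sum $\sum_{v\in V'}\K_v$ over $V'\subseteq V^{(n)}$ is $\sqrt{d/2}\,L_n$-Lipschitz; summing the tail $\sum_{n>N}L_n$ of the convergent series $\sum_n L_n=L$ then gives the sharper constant, exactly as Theorem \ref{thrm:optimal bound} sharpens Theorem \ref{thrm: schauder stability} from $(d+1)L$ to $\sqrt{2d}\,L$.

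Then I would run the matching argument unchanged. Reduce to a single point $x$ matched to $A$: since each $\K_v\in\B$ is nonnegative (Proposition \ref{prop:kernels exist}) and vanishes on $A$, one has $|\K_v(x)|=|\K_v(x)-\K_v(a)|\le Lip(\K_v)\,d(x,A)$ for the nearest $a\in A$, and grouping the sum $\sum_{f\in\B^{>N}}|f(x)|$ by layer (and, in the CFK case, bounding each layer's contribution $\big(\sum_{v\in V'}\K_v\big)(x)$ by the per-layer Lipschitz estimate, with the sign bookkeeping via sets $A_n,B_n$ of Theorem \ref{thrm:optimal bound} once two distinct points are involved) yields the single-point inequality. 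One then extends along a partial matching to positive diagrams, and via the identity $W_1(\alpha,\emptyset)=W_1(\alpha^+,\alpha^-)$ to signed diagrams, precisely as in the cited proofs; taking the infimum over matchings gives $\|F_\B(\alpha)-F_\B^{N}(\alpha)\|_1\le(d+1)\big(\sum_{n=N+1}^{\infty}L_n\big)W_1(\alpha,\emptyset)$ in general and $\le\sqrt{2d}\big(\sum_{n=N+1}^{\infty}L_n\big)W_1(\alpha,\emptyset)$ in the CFK case. Finally, since $\sum_{n=N+1}^{\infty}L_n=L-\sum_{n=0}^{N}L_n\to 0$ as $N\to\infty$, the right-hand side tends to $0$, which is the claimed arbitrary-accuracy approximation. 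The one genuine point to check is the assertion at the end of the first paragraph: that the stability proofs of Theorems \ref{thrm: schauder stability} and \ref{thrm:optimal bound} rest only on local Lipschitz finiteness and vanishing on $A$, not on any spanning or uniqueness property of a Schauder basis, so that they transfer to the non-spanning family $\B^{>N}$; once that is granted, the rest is bookkeeping of constants.
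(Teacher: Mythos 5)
Your proposal is correct and is essentially the argument the paper intends: the paper's own justification is the single sentence that the corollary ``follows directly from the proofs of Theorem \ref{thrm: schauder stability} and Theorem \ref{thrm:optimal bound},'' and your write-up simply makes explicit why those proofs transfer to the tail family $\B^{>N}$ (they use only local Lipschitz finiteness and vanishing on $A$, not the spanning or uniqueness properties of a Schauder basis), together with the correct per-layer accounting of the Lipschitz constants. Your observation that the spanning property is only needed for injectivity, not stability, is exactly the point that makes the restriction to a non-basis subfamily legitimate.
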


The proof of this follows directly from the proofs of Theorem \ref{thrm: schauder stability} and Theorem \ref{thrm:optimal bound}.\\

We now recall the Schauder basis of \emph{Stacked Functionals} as described in Definition \ref{def:stacked functional}. This Schauder basis has the appealing property that vectorization by this basis maps unsigned persistence diagrams to to a vector with a 1-norm equal to the 1-Wasserstein distance of the diagram to the empty diagram. 


\begin{proposition}\label{thrm:stacked schauder eval}
    Let $X\subset \mathbb{R}^d$ be a compact polyhedron and suppose polyhedral pair $(X,A)$ is endowed with the nested CFK-triangulation $\{(T^n, S^n)\}_{n=0}^\infty$ of scales $\frac{1}{z^n}$ on a pair $(X,A)$.  Let $\B$ be the Schauder basis of stacked functionals as in Definition \ref{def:stacked functional}. Then for any $\alpha \in D_+(X,A)$, $||F_\B(\alpha)||_1 = W_1(\alpha, \emptyset)$. 
\end{proposition}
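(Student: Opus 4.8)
The plan is to exploit that every stacked functional $\mathfrak{K}_v$ is a nonnegative function, so that $\|F_\B(\alpha)\|_1$ collapses to the evaluation of one global function. Fix $\alpha=\sum_i x_i\in D_+(X,A)$ with all terms positive. Each $\K_v^n$ is nonnegative (in the construction of Proposition \ref{prop:kernels exist} the p.l.\ function equals a nonnegative barycentric coordinate of $v$ times the positive constant $c^{*}$, and vanishes off the simplices containing $v$), hence $\mathfrak{K}_v\ge 0$ and $\alpha(\mathfrak{K}_v)=\sum_i\mathfrak{K}_v(x_i)\ge 0$ for every $v$. Thus $\|F_\B(\alpha)\|_1=\sum_{v\in V}\alpha(\mathfrak{K}_v)=\sum_{v\in V}\sum_i\mathfrak{K}_v(x_i)$, and since all terms are nonnegative, Tonelli lets me reorder this as $\sum_i\bigl(\sum_{v\in V}\mathfrak{K}_v\bigr)(x_i)=\sum_i g(x_i)$, where $g:=\sum_{v\in V}\mathfrak{K}_v$; finiteness of $g(x)$ and the legitimacy of the reordering follow from Remark \ref{Remark: finite simplices} (only finitely many vertices per layer contribute at $x$) together with the geometric decay of the layer scales.

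\textbf{Identifying the target.} Next I would compute the right-hand side: for $\alpha=\sum_i x_i\in D_+(X,A)$, every partial matching of $\alpha$ with a representative of $\emptyset$ all of whose terms equal $A$ has cost exactly $\sum_i d(x_i,A)$, so $W_1(\alpha,\emptyset)=\sum_i d(x_i,A)=\alpha(d_A)$, where $d_A(x):=d(x,A)$. Comparing with the previous step, the proposition is equivalent to the single pointwise identity
\[
g(x)=\sum_{v\in V}\mathfrak{K}_v(x)=d(x,A)\qquad\text{for all }x\in X.
\]

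\textbf{Unwinding $g$.} To prove the identity I would first use the explicit normalizations. Writing $\mathfrak{K}_v=\frac{\sqrt{2}\,d(v,A)(z^2-1)}{z^2}\sum_{n\ge N_v}\K_v^n$, with $N_v$ the index for which $v\in V^{(N_v)}$, and interchanging the $v$- and $n$-sums gives $g=\frac{\sqrt{2}(z^2-1)}{z^2}\sum_{n\ge 0}h_n$ with $h_n:=\sum_{v\in V^n}d(v,A)\,\K_v^n$. Each $h_n$ is p.l.\ on $T^n$ by Lemma \ref{lem:linear sum}, and since $\K_v^n$ vanishes at every vertex of $T^n$ other than $v$ while $\K_w^n(w)=\frac{1}{\sqrt{2}z^{2n}}$, the vertex values of $h_n$ on $T^n$ are $w\mapsto\frac{d(w,A)}{\sqrt{2}z^{2n}}$ (in particular $0$ on $S^n$). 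Hence $h_n=\frac{1}{\sqrt{2}z^{2n}}\Pi_n$, where $\Pi_n$ is the p.l.\ interpolant of $d_A$ on $T^n$, so $g=\frac{z^2-1}{z^2}\sum_{n\ge 0}z^{-2n}\Pi_n$ — a convex combination of the $\Pi_n$, since $\frac{z^2-1}{z^2}\sum_{n\ge 0}z^{-2n}=1$.

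\textbf{The crux.} The main obstacle is the final step: showing $\Pi_n=d_A$ for every $n$, i.e.\ that $d_A$ is affine on each simplex of the CFK triangulation (by scaling, it is enough to check this at scale $1$). The route I would take is to work on a single scale-$1$ CFK simplex $\sigma$: its vertices lie in one integer unit cell and along one fixed linear order of the fractional coordinates, $A$ is the union of the hyperplanes $\{x_i=x_j\}$, $(i,j)\in\Upsilon'$, and on $X$ the distance to $\{x_i=x_j\}$ is a fixed positive multiple of the affine function $x_j-x_i$; one must then verify that the minimum of these finitely many affine functions over $(i,j)\in\Upsilon'$ is attained by one fixed relation throughout $\sigma$, which makes $d_A|_\sigma$ affine. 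This combinatorial point — pinning down which hyperplane of $A$ realizes the distance, uniformly over a CFK simplex — is the delicate part; once it is established, $\Pi_n=d_A$ for all $n$, the convex combination collapses to $g=d_A$, and the proposition follows. (The same conclusion can be reached via uniqueness: since $X$ is compact, $d_A\in Lip_c(X,A)$, so Theorem \ref{thrm:stacked schauder basis} gives a unique expansion $d_A=\sum_v a_v\mathfrak{K}_v$, and feeding $\Pi_n=d_A$ into the recursion defining the coefficients forces $a_v=1$ for all $v$.)
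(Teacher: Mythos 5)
Your strategy is the same as the paper's: both reduce the claim to the pointwise identity $\sum_{v\in V}\mathfrak{K}_v = d(\cdot,A)$, both recognize the layer-$n$ contribution as $\tfrac{z^2-1}{z^{2n+2}}$ times the p.l.\ interpolant $\Pi_n$ of $d_A$ on $T^n$, and both collapse the resulting geometric series using $\tfrac{z^2-1}{z^2}\sum_{n\ge0}z^{-2n}=1$. Your preliminary reductions (nonnegativity of the $\K_v^n$, the Tonelli interchange, and $W_1(\alpha,\emptyset)=\alpha(d_A)$) are correct and in fact more explicit than what the paper records. The entire proof therefore rests on the one step you flag as delicate and do not prove: that $d_A$ is affine on every scale-$1$ CFK simplex, so that $\Pi_n=d_A$ for all $n$.

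The paper closes this step by arguing that the tie locus $\{x_j-x_i=x_{j'}-x_{i'}\}$ between two relations of $\Upsilon'$ meets the integer vertex set only along hyperplanes $\{x_j-x_i\in\Z\}$, which are CFK cutting hyperplanes, and concluding that each simplex lies on one side of every tie locus so that a single relation realizes the minimum throughout. Your instinct to isolate this point is right, and you should treat it with suspicion: when $\Upsilon'$ is a single relation, $d_A$ is globally affine on $X$ and there is nothing to check (this covers the $1$-parameter and mixup settings), but for $|\Upsilon'|\ge 2$ the claim can fail. Take $X=\{x\in\R^3 : x_1\le x_2\le x_3\}$ (suitably truncated to be compact), $\Upsilon'=\{(1,2),(2,3)\}$, and the scale-$1$ CFK simplex $\sigma$ with vertices $(0,0,0),(0,0,1),(0,1,1),(1,1,1)$. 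All four vertices lie in $A$, so $\Pi_0\equiv 0$ on $\sigma$, yet the barycenter $b=(\tfrac14,\tfrac12,\tfrac34)$ has $d_A(b)=\tfrac{1}{4\sqrt2}>0$; the vertices $(0,0,1)$ and $(0,1,1)$ sit on opposite sides of the tie locus $x_2-x_1=x_3-x_2$, which is not a CFK hyperplane. Since $d_A$ is a minimum of affine functions, hence concave, one always has $\Pi_n\le d_A$, so $\Pi_0(b)<d_A(b)$ forces $\sum_{v}\mathfrak{K}_v(b)<d_A(b)$, and the stated equality fails for $\alpha=\delta_b$. In short: your proof is complete exactly when the interpolation identity $\Pi_0=d_A$ holds (e.g.\ $|\Upsilon'|=1$, or more generally when $d_A$ is p.l.\ on $T^0$), and that hypothesis must be added or verified; neither your sketch nor the paper's tie-hyperplane argument establishes it in the stated generality.
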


\begin{proof}

    Since we are restricting to unsigned persistence diagrams, it is sufficient to prove that $\sum\limits_{v \in V} \mathfrak{K}_v = d(-, A)$. To verify this, we will first show that $d(-,A)$ is $N-$linear for all $N \geq 0$. By Lemma \ref{lem:n2n+1linear}, it is sufficient to prove this for $N=0$. \\

    Recall that there exists a collection of relations $\Upsilon\subset \{(i,j)\ | \ i <j\leq d\}$ on coordinates of points in $X \subset \mathbb{R}^d$, which define $X$ as a polyhedron in $\mathbb{R}^d$; i.e. $X = \{x \in \mathbb{R}^d \ | \ x_i \leq x_j \ \forall (i,j) \in \Upsilon\}$. Let $\Upsilon'\subset \Upsilon$ be the nonempty subset of relations defining $A \subset X$; that is $A= \bigcup\limits_{(i,j) \in \Upsilon'} \{x \in X \ | \ x_i =x_j\}$.

    Let $x \in X$. For $(i,j) \in \Upsilon'$, consider the closed subspace $A_{(i,j)} = \{x \in X \ | \ x_i =x_j\}$. Then $d(x,A_{i,j}) = \sqrt{\left| x_j-\frac{x_j+x_i}{2}\right|^2 + \left|\frac{x_j+x_i}{2}-x_i\right|^2 }= \frac{1}{\sqrt{2}}(x_j-x_i)$. Thus $d(-,A_{i,j})$ is in fact linear with respect to any triangulation, including the CFK-triangulation at scale 1. \\

    Then note that $A = \bigcup\limits_{(i,j) \in \Upsilon'} A_{(i,j)}$. Therefore, $D(x,A) = \min\limits_{(i,j) \in \Upsilon'} d(x, A_{i,j})$. We claim that for any simplex $\sigma\in T^0$, there exists $(i,j)\in \Upsilon'$ such that $d(v,A) = d(v, A_{(i,j)})$ for all vertices $v \in \sigma$. To see this, $(i,j) , (i',j') \in \Upsilon'$ such that $(i,j) \neq (i',j')$. Then consider the subspaces where $d(x, A_{(i,j)}) = d(x, A_{(i',j')})$. Equivelently, this space is defined by the set of all $x \in X$ such that $x_j-x_i = x_{j'} - x_{i'}$. When restricting $x$ to be vertices $v$ of $T^0$, the coordinates of $v$, denoted $\{(v)_k\}_{k=1}^d$, are integers. Therefore, the hyperplanes $\{x \ | \ x_j-x_i = (v)_{j'} - (v)_{i'} \}_{(v \in V^0 \ | \ (i',j' ) \in \Upsilon')}\subset \{x \ | \ x_j-x_i = z \}_{z \in \mathbb{Z}}$. These are the very hyperplanes that define faces of d-simplices in the CFK-triangulation at scale $1$. Therefore, all vertices $v$ of $\sigma$ must exist within one side of each one of these hyperplanes. Hence there is indeed some $(i,j)$ for which $d(v,A_{i,j}) = d(v,A)$ for all $v \in \sigma$. Hence $d(-,A)$ is in fact linear on $T^0$, and thus also on $T^N$ for all $N \geq 0$.

    We may now prove that $\sum\limits_{v \in V} \mathfrak{K}_v = d(-, A)$. Recall by definition that 

    $$\mathfrak{K}_v:= \frac{\sqrt{2}\cdot d(v,A) (z^2-1)}{z^2}\sum\limits_{n=N}^\infty \K_v^n$$

    Fix $v^* \in V^{(N)}$. Recall that  $\K_v^n(v^*) = \frac{1}{z^{2n}\sqrt{2}}$ for all $n \geq N$. Therefore,

    \begin{equation*}
        \begin{split}
            \sum\limits_{v \in V} \mathfrak{K}_v&= \sum\limits_{N=0}^\infty \sum\limits_{v \in V^{(N)}} \mathfrak{K}_v \\
            &= \sum\limits_{N=0}^\infty \sum\limits_{v \in V^{(N)}} \frac{\sqrt{2}\cdot d(v,A) (z^2-1)}{z^2}\sum\limits_{n=N}^\infty \K_v^n\\
            &=  \sum\limits_{n=0}^\infty \sum\limits_{v \in V^{n}} \frac{\sqrt{2}\cdot d(v,A) (z^2-1)}{z^2} \K_v^n\\
        \end{split}
    \end{equation*}

    For $v^* \in V^n$, $\K_{v^*}^n(v^*) = \frac{1}{z^{2n}\sqrt{2}}$. Furthermore, $\K_{v'}^n(v^*) =0 $ for all $v' \in V^n$ such that $v' \neq v^*$. Therefore, $\sum\limits_{v \in V^{n}} \frac{\sqrt{2}\cdot d(v,A) (z^2-1)}{z^2} \K_v^n(v^*) =  \frac{\sqrt{2}\cdot d(v^*,A) (z^2-1)}{z^2} \K_{v^*}^n(v^*)= \frac{(z^2-1)}{z^{2n+2}} d(v^*,A)$. 

    Since this is for all $v^* \in V^n$, and by the fact that $d(-,A)$ is $n-$linear, we then infer that $\sum\limits_{v \in V^{n}} \frac{\sqrt{2}\cdot d(v,A) (z^2-1)}{z^2} \K_v^n =\frac{(z^2-1)}{z^{2n+2}} d(-,A) $. \\

    Summing over all layers of the triangulation, we gather that $\sum\limits_{v \in V} \mathfrak{K}_v= \sum\limits_{n=0}^\infty \frac{(z^2-1)}{z^{2n+2}} d(-,A) = d(-,A)$.

\end{proof}

\subsection{Visualizing the Feature Map on Signed Persistence Diagrams}

The result of the vectorization $F_\B$ applied to a persistence diagram $\alpha= \sum\limits_{i=0}^\infty x_i$ is a vector $F_\B(\alpha)$ in $\ell_1$. We could attempt to visualize this by coloring segments of $\mathbb{R}_{\geq 0}$, where each segment is proportional to entries in $F_\B(\alpha)$.   That is, if we denote the components of $F_\B(\alpha)$ by $\{a_i\}_{i=0}^\infty$, then we partition $\mathbb{R}_{\geq 0}$ into intervals of the form $[\sum\limits_{i=-1}^{n-1} |a_i|, \sum\limits_{i=-1}^n |a_i|)$, where $a_{-1}:=0$. Since $F_\B(\alpha) \in \ell_1$, $\sum\limits_{i=0}^\infty |a_i| =:a < \infty$. Thus this collection of intervals will only cover the line segment $[0, a)$. Assign a color to each interval in the partition of $[0,a)$. This colored line segment may be the most direct visual representation of our vectorization, however we do not feel that it portrays the properties of our method in an intuitive manner. We instead, decompose $F_\B(\alpha)$ into its component vectors. Recall that $F_\B$ is linear, and thus $F_\B(\alpha) = \sum\limits_{i=0}^\infty F_\B(x_i)$. We then create the color-coded line segment, as described above, for each point $x_i$ of the persistence diagram. In the case that $\alpha$ is a persistence diagram of a 1-parameter persistence module, we plot this collection of color-coded line segments in $\mathbb{R}^3$. Here each line segment is plotted parallel to the $z-$axis, atop the corresponding point $x_i$ in the persistence diagram, which is plotted in the $xy-$plane.

\begin{figure}[H]
\begin{tikzpicture}
    \node (PD) at (0,0){\includegraphics[width=0.3\linewidth, trim =25pt 130pt 650pt 150pt, clip]{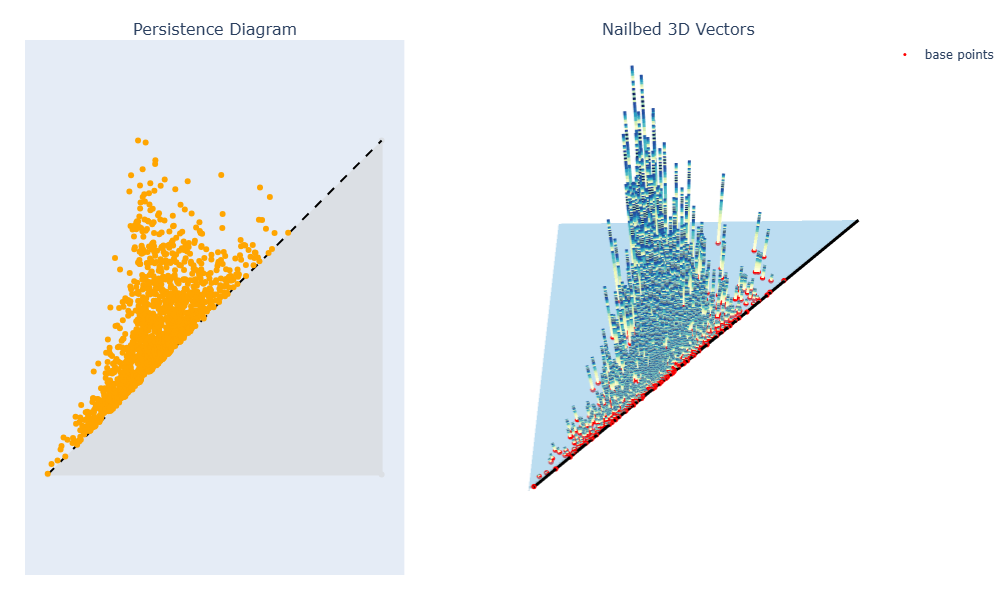}};
    \node (TOP) at (6,0){  \includegraphics[width=0.44\linewidth, trim =450pt 45pt 75pt 50pt, clip]{PD-top.png}};
    \node (TOP) at (0,-6){  \includegraphics[width=0.4\linewidth, trim =450pt 45pt 75pt 50pt, clip]{PD-topside.png}};
    \node (TOP) at (6,-6){  \includegraphics[width=0.4\linewidth, trim =450pt 45pt 75pt 50pt, clip]{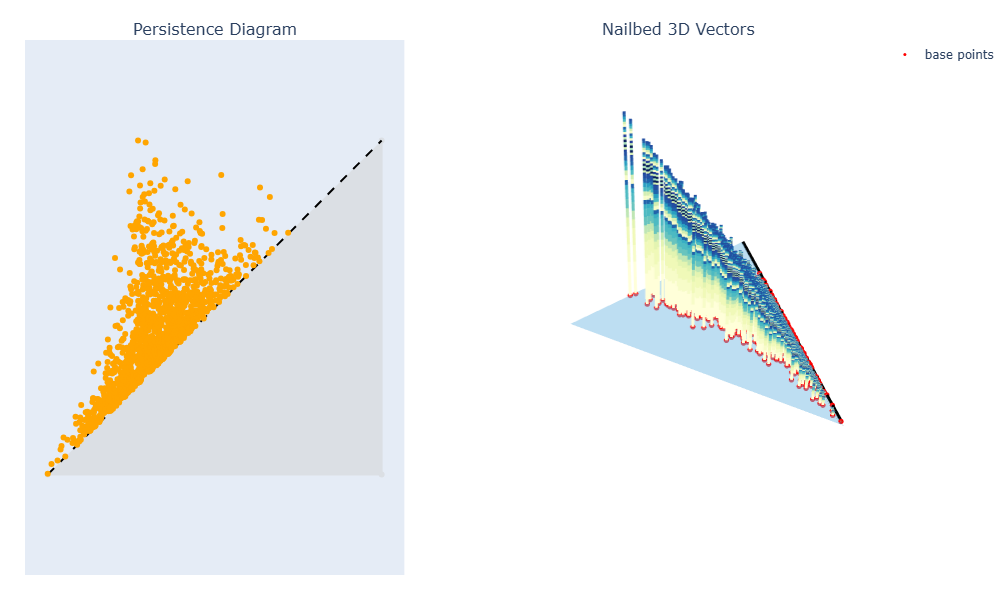}};
\end{tikzpicture}
   
    \caption{Visualization of the vector $F_\B(\alpha)$ on a persistence diagram of a 1-parameter persistence module. Here $F_\B$ is applied to each point of the persistence diagram, yielding a vector in $\ell_1$ for each point. These vectors are plotted as multicolored line segments. }
    \label{fig:nailbed}
\end{figure}

In the case that $\alpha$ is a signed barcode froma 2-parameter persistence module, then we may employ a similar decomposition of $F_\B(\alpha)$ into its component vectors over points of the persistence diagram. However, to do so as in the case of the 1-parameter case, we would need to plot colored line segments in $\mathbb{R}^5$. To remedy this, we instead use signed line segments in $\mathbb{R}^2$ to represent rectangles of the signed barcode, as in the setting of Botnan, Opperman, and Oudot \cite{signedbarcodes}. In this setting, a rectangle $R$ with $\inf(R)=a$ and $\sup(R)=b$, the rectangle $R$ is represented by the line segment from point $a$ to $b$ in $\mathbb{R}^2$. This line segment is colored blue if $\langle a,b\rangle$ is a positive rectangle in the signed barcode, and red if it is negative. \\

Now, as in the case of persistence diagrams from 1-paramter persistence, we vectorize each bar of the signed barcode by $F_\B$. Instead of plotting line segments parallel to the z-axis, we plot sheets parallel to the z-axis, atop or below line segments in the signed barcode. We plot these sheets, or towers, in the positive z-direction if $\langle a,b \rangle$ is a positive rectangle in the barcode, and in the negative z-direction if $\langle a,b \rangle $ is negative in the barcode.

\begin{figure}[H]
    \begin{tikzpicture}
        \node(Barcode) at (0,0){\includegraphics[width=0.25\linewidth]{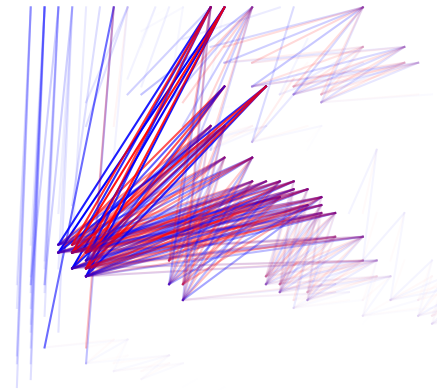}};
        \node (multipersatop) at (6,0){\includegraphics[width=0.35\linewidth, trim=600pt 0pt 30pt 30pt, clip]{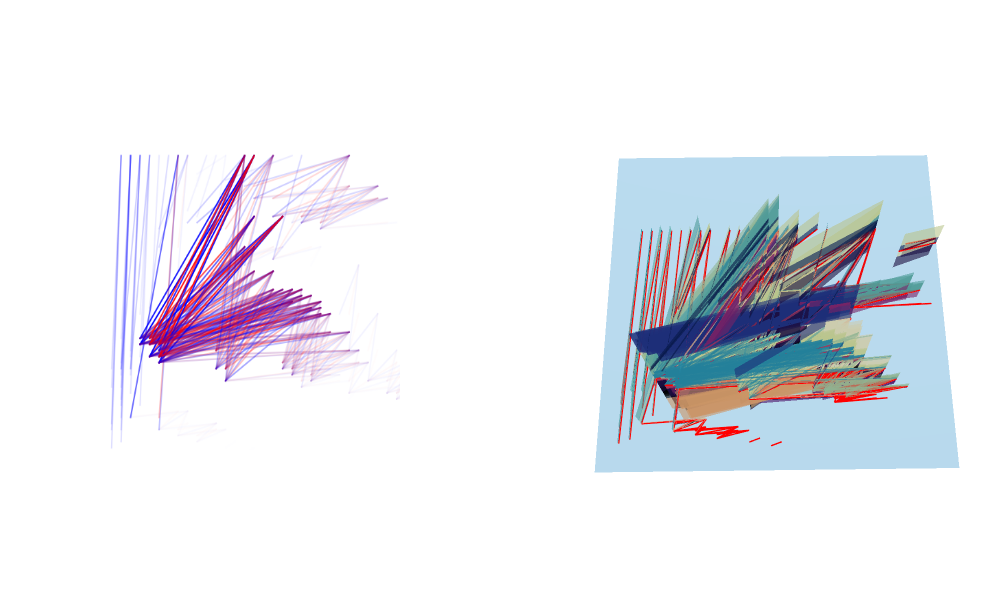}};
        \node (multiperunder) at (0,-6){\includegraphics[width=0.35\linewidth, trim=600pt 30pt 30pt 30pt, clip]{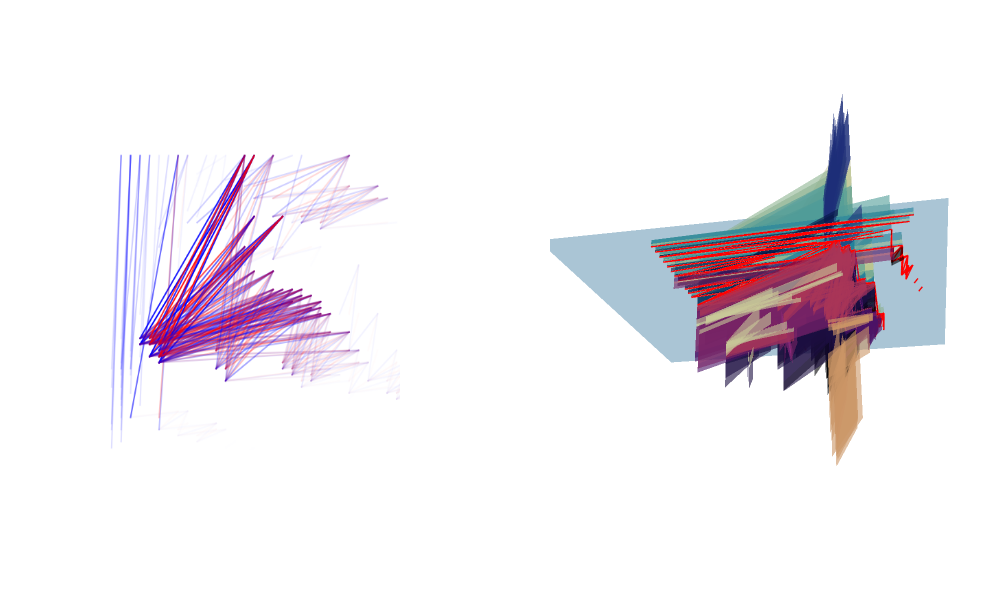}};
        \node (multiperside) at (6,-6){\includegraphics[width=0.35\linewidth, trim=600pt 30pt 30pt 30pt, clip]{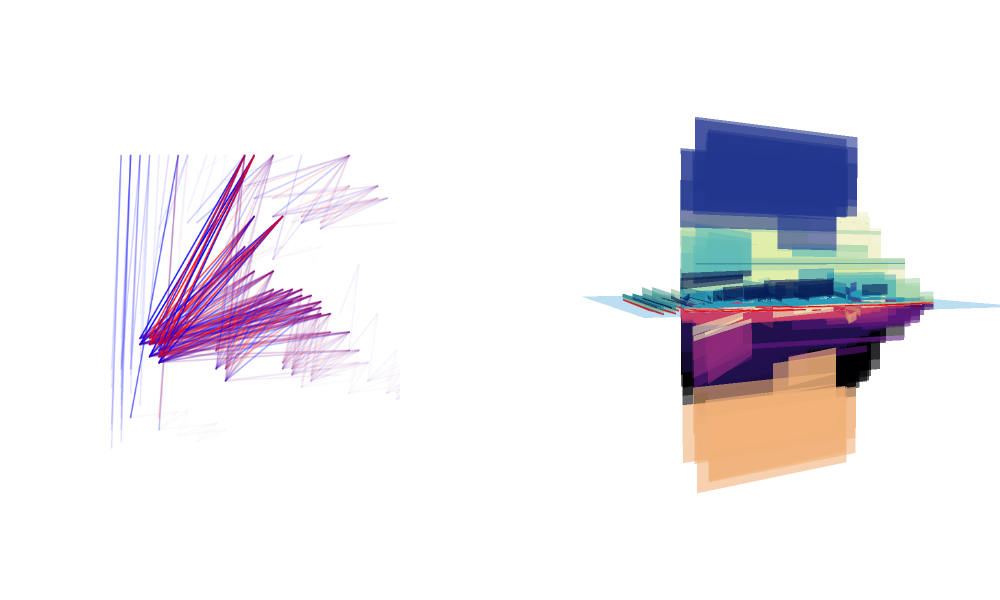}};
    \end{tikzpicture}
    \caption{Visualization of the vector $F_\B(\alpha)$ on a persistence diagram of a 2-parameter persistence module. The barcode, represented by a set of positive and negative line segments of positive slope, is generated with the Multipers code by Loiseaux et. al. \cite{multipers}. Here $F_\B$ is applied to each point of the corresponding signed persistence diagram, yielding a vector in $\ell_1$ for each diagram point. These vectors are plotted as multicolored towers atop the corresponding line segment if the line segment is positive, and below the line segment if the line segment is negative in the signed barcode.}
    \label{fig:multi-vis}
\end{figure}

We note in this visualization that bars of the barcode that have very small or very large slope do not have very tall towers on them. This is because all functionals of $\B$ are $0$ on the subspace $A$ of diagram space $X$. In this setting, $A$ is the space of points representing rectangles with infemum $a=(a_0,a_1)$ and supremum $b=(b_0, b_1)$, such that either $a_0=b_0$, or $a_1=b_1$. That is, $A$ consists of all ``flat rectangle''. These flat rectangles get represented as vertical or horizontal line segments in the visualization of the top left portion of Figure \ref{fig:multi-vis}. Since all functionals of $\B$ are $0$ on these flat rectangles, and the collection $\B$ is locally Lipschitz finite, then rectangles that are nearly flat, or close to $A$, do not contribute as much to the vectorization $F_\B(\alpha)$. \\

\section{Generalizing to Relative Radon Measures}
We now extend our results on vectorization of persistence diagrams on $(X,A)$ to relative Radon measures on $(X,A)$, as in the setting by Bubenik and Elchesen\cite{PeterAlex}. Some of the proofs of these results are analogous to the proofs in the previous section when extended to relative Radon measures. 

For Schauder basis $\B= \{f_i\}_{i=0}^\infty$ of $Lip_c(X,A)$, define vectorization of $ \hat{\mathcal{M}}_1(X,A)$ by $\B$ to be $F_\B: \hat{\mathcal{M}}_1(X,A) \rightarrow \ell_1$ by $F_\B(\alpha) = \left(\alpha(f_i)\right)_{i=0}^\infty = \left( \int\limits_X f_i \ d \alpha \right)_{i=0}^\infty $. Note that for each element $f_i$ of $\B$, $Lip(f_i) <\infty$ and $f_i(A)=0$. By definition of $\hat{M}_1(X,A)$, $\int_X d(-,A) \ d\alpha < \infty$. Hence $\int_X f_i \ d\alpha \leq \int_X Lip(f_i)\cdot  d(-,A) \ d\alpha < \infty$. Thus this is indeed a well defined map into $\mathbb{R}^\omega$.


We show that vectorization $F_\B$ of a $\hat{\M}(X,A)$ is stable and injective. First, we will need the following lemma. \\

\begin{lemma}
    Let $\B$ be a l.l.f. Schauder basis of $(Lip_c(X,A), ||\cdot ||_\infty)$, with local Lipschitz upper bound $M$. Let $\mathcal{A} \subseteq \B$. Then $g:=\sum\limits_{f \in \mathcal{A}} f$ has Lipschitz constant at most $2M$. 
\end{lemma}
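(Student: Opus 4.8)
The plan is to reproduce, at the level of a single pair of points, the estimate used in the ``single point pairings'' step of the proof of Theorem~\ref{thrm: schauder stability}. Before bounding the Lipschitz constant I would first check that $g = \sum_{f \in \mathcal{A}} f$ is a well-defined function $X \to \mathbb{R}$. Since a Schauder basis is countable, $\mathcal{A}$ is countable; and since every $f \in \B$ vanishes on $A$, for any $x \in X$ and any $a \in A$ we have $|f(x)| = |f(x) - f(a)| \le Lip(f)\, d(x,a)$, hence $|f(x)| \le Lip(f)\, d(x,A)$. Summing this over the (at most countably many) $f \in \mathcal{A}$ with $f(x) \ne 0$ and invoking the l.l.f.\ bound $\sum_{f \in \B,\, f(x) \neq 0} Lip(f) \le M$ gives $\sum_{f \in \mathcal{A}} |f(x)| \le M\, d(x,A) < \infty$, so the defining series converges absolutely at each point; the same inequality shows $g|_A = 0$. (Note $g$ need not be compactly supported, so in general $g \in Lip(X,A)$ rather than $Lip_c(X,A)$, which is all the statement requires.)

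For the Lipschitz bound, fix $x, y \in X$ and estimate $|g(x) - g(y)| \le \sum_{f \in \mathcal{A}} |f(x) - f(y)|$. Setting $B_x = \{f \in \mathcal{A} : f(x) \neq 0\}$ and $B_y = \{f \in \mathcal{A} : f(y) \neq 0\}$, every $f \in \mathcal{A} \setminus (B_x \cup B_y)$ contributes nothing, while each $f \in B_x \cup B_y$ satisfies $|f(x) - f(y)| \le Lip(f)\, d(x,y)$. Therefore
\[
|g(x) - g(y)| \;\le\; \Big( \sum_{f \in B_x} Lip(f) + \sum_{f \in B_y} Lip(f) \Big)\, d(x,y) \;\le\; 2M\, d(x,y),
\]
where the final step applies the locally Lipschitz finite property at $x$ and at $y$ separately, using that $B_x$ and $B_y$ are subsets of the sets of basis functionals nonzero at the respective point. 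Taking the supremum over $x \neq y$ gives $Lip(g) \le 2M$.

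I do not expect a genuine obstacle here: this is essentially the point-mass case of Theorem~\ref{thrm: schauder stability} packaged as a statement about a single summed functional. The only place requiring a little care is the well-definedness of $g$, where one must pass from the l.l.f.\ control on Lipschitz constants to summability of the \emph{values} $f(x)$; this uses precisely that the basis elements vanish on $A$ together with $\mathcal{A} \subseteq \B$, so the l.l.f.\ bound is available. Everything else is the triangle inequality and the elementary two-set split $B_x \cup B_y$.
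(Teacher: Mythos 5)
Your argument is correct and is essentially identical to the paper's proof: both split the sum over the functionals nonzero at $x$ or at $y$, apply the triangle inequality and the pointwise Lipschitz bound, and invoke the l.l.f.\ property at each of the two points to obtain the factor $2M$. The only addition is your preliminary check that $g$ is well defined via absolute convergence of $\sum_f |f(x)|$, which the paper omits but which is a harmless (and welcome) refinement.
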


\begin{proof}
    Let $x,y \in X$, and let $A_x, A_y$ be the sets of functions in $\mathcal{A}$ which are nonzero on $x$ and $y$ respectively. Then by assumption that $\B$ is l.l.f. with local Lipschitz upper bound $M$, we know $\sum\limits_{f \in A_x} Lip(f) , \sum\limits_{f \in A_y} Lip(f) \leq M$. Therefore, we have the following. 

    \begin{center}
        \begin{equation*}
            \begin{split}
                |g(x)-g(y)| & = \left| \sum\limits_{f \in A_x \cup A_y} f(x)-f(y)\right| \\
                &\leq \sum\limits_{f \in A_x \cup A_y} |f(x)-f(y)|\\
                &\leq \sum\limits_{f \in A_x \cup A_y} Lip(f) \cdot d(x,y)\\
                &\leq 2M \cdot d(x,y)
            \end{split}
        \end{equation*}
    \end{center}
\end{proof}

\begin{theorem}\label{thrm:measure stability}
    Vectorization of $\hat{M}_1(X,A)$ through a locally Lipschitz finite Schauder basis is stable and injective. 
\end{theorem}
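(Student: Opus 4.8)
The plan is to transcribe the proof of Theorem~\ref{thrm: schauder stability}, replacing partial matchings of diagrams by couplings of relative Radon measures and using Lemma~\ref{lem: couplings} in place of the elementary matching estimate. Recall that $\int_X d(-,A)\,d\alpha<\infty$ for every $\alpha\in\hat{\mathcal{M}}_1(X,A)$, and write $M$ for the local Lipschitz bound of $\B=\{f_i\}_{i=0}^\infty$.

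\textbf{Stability}, in the form $\|F_\B(\alpha)-F_\B(\beta)\|_1\le 2M\,W_1(\alpha,\beta)$. First take $\alpha,\beta\in\hat{\mathcal{M}}_1^+(X,A)$ and fix a coupling $\pi\in\Pi(\alpha,\beta)$. For $f\in\B\subseteq Lip(X,A)$, Lemma~\ref{lem: couplings} applied once with the second argument $0$ and once with the first argument $0$ gives $\alpha(f)=\pi(f\oplus 0)=\int_{X^2}f\circ p_1\,d\pi$ and $\beta(f)=\pi(0\oplus f)=\int_{X^2}f\circ p_2\,d\pi$, both finite since $|f|\le Lip(f)\,d_A$ and $\alpha(d_A),\beta(d_A)<\infty$; hence $|\alpha(f)-\beta(f)|\le\int_{X^2}|f(x)-f(y)|\,d\pi$. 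Summing over $\B$ and applying Tonelli,
\begin{equation*}
\|F_\B(\alpha)-F_\B(\beta)\|_1=\sum_i|\alpha(f_i)-\beta(f_i)|\le\int_{X^2}\Big(\sum_{f\in\B}|f(x)-f(y)|\Big)\,d\pi.
\end{equation*}
For fixed $x,y$, with $B_x,B_y\subseteq\B$ the functionals nonzero at $x$ and at $y$, the inner sum is at most $\sum_{f\in B_x\cup B_y}Lip(f)\,d(x,y)\le 2M\,d(x,y)$ and also at most $\sum_{f\in B_x}Lip(f)\,d(x,A)+\sum_{f\in B_y}Lip(f)\,d(y,A)\le M\big(d(x,A)+d(y,A)\big)$, so it is $\le 2M\,\bar d(x,y)$. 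Thus $\|F_\B(\alpha)-F_\B(\beta)\|_1\le 2M\,\pi(\bar d)$, and taking the infimum over couplings gives the claim for positive measures; the signed case follows from linearity of $F_\B$ and $W_1(\alpha,\beta)=W_1(\alpha^++\beta^-,\beta^++\alpha^-)$ exactly as at the end of the proof of Theorem~\ref{thrm: schauder stability}.

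\textbf{Injectivity.} As $F_\B$ is linear it suffices that $F_\B(\alpha)=0$ forces $\alpha=0$. Suppose $\alpha\neq 0$; equivalently, in the quotient $\hat{\mathcal{M}}_1^+(X,A)\subseteq\mathcal{B}^+(X)/\mathcal{B}^+(A)$, the restrictions of $\alpha^+$ and $\alpha^-$ to $X\setminus A$ differ. Since these restrictions are locally finite off $A$, there are $x\in X\setminus A$ and $\delta>0$ with $\overline{B_{2\delta}(x)}\cap A=\emptyset$, $\alpha^\pm\big(\overline{B_{2\delta}(x)}\big)<\infty$, and a bump $g\in Lip_c(X,A)$ supported in $\overline{B_\delta(x)}$ with $\alpha(g)=\int g\,d(\alpha^+-\alpha^-)\neq 0$ (regularity of a finite signed measure on a relatively compact open subset of $X\setminus A$; the extension of $g$ by $0$ is Lipschitz since $\supp g$ is at positive distance from $A$). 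Write the Schauder expansion $g=\sum_i a_i f_i$, with partial sums $s_N\to g$ uniformly. We claim $\alpha(s_N)\to\alpha(g)$: each $s_N-g\in Lip_c(X,A)$ satisfies $|s_N-g|\le Lip(s_N-g)\,d_A$, and since $\sup_N Lip(s_N)<\infty$ (by the construction of $\B$ from the nested triangulation, via the per-simplex gradient estimates behind Lemma~\ref{lemma: CFK Lipschitzness} and the bounded geometry of the triangulation) the integrands $|s_N-g|$ are bounded by a fixed multiple of $d_A\in L^1(\alpha^+)\cap L^1(\alpha^-)$ and tend to $0$ pointwise, so dominated convergence applies. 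But $\alpha(s_N)=\sum_{i\le N}a_i\,\alpha(f_i)=0$ by hypothesis, whence $\alpha(g)=0$, a contradiction; thus $F_\B$ is injective.

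\textbf{Main obstacle.} The stability half is a routine adaptation once Lemma~\ref{lem: couplings} is available. The substantive step is injectivity, and within it the passage $\alpha(s_N)\to\alpha(g)$: uniform convergence of the Schauder partial sums alone does not suffice, because $\alpha^\pm$ may carry infinite mass near $A$, so one needs control that does not degrade there --- either a uniform Lipschitz bound on the partial sums together with $d_A\in L^1(\alpha^\pm)$, or that the partial sums of a compactly supported function remain supported in one compact set disjoint from $A$. Both hold for the triangulation-based bases of Sections~3--4 (the first via Lemma~\ref{lemma: CFK Lipschitzness}-type estimates, the second from the interpolation structure in the proof of Theorem~\ref{thrm: Schauder Basis}), and formulating the right hypothesis for an arbitrary locally Lipschitz finite basis is the place where the argument demands the most care.
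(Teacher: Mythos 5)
Your proposal is correct and follows the same two-part strategy as the paper (stability via couplings and Lemma~\ref{lem: couplings}; injectivity by exhibiting a test function in $Lip_c(X,A)$ that separates the measures and expanding it in the basis), but it differs in execution in two ways that are worth recording. For stability, you push the difference $\alpha(f)-\beta(f)$ pointwise under the coupling integral and apply Tonelli, bounding the inner sum by $2M\,\bar d(x,y)$; the paper instead splits the index set by the sign of $\alpha(f_i)-\beta(f_i)$, applies Lemma~\ref{lem: couplings} to the two lumped functions, and invokes its preceding lemma that any subsum of an l.l.f.\ basis is $2M$-Lipschitz, arriving at the constant $4M$ (and, as written, against $d$ rather than $\bar d$). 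Your route recovers the sharper constant $2M$ of Theorem~\ref{thrm: schauder stability} and correctly uses the truncated metric $\bar d$, so it is, if anything, an improvement. For injectivity, the paper takes a compact $C$ with $\alpha(C)\neq\beta(C)$, passes to the Lipschitz approximations $1_C^\epsilon$, and then simply writes $\sum_i a_i^\epsilon\,\alpha(f_i)=\alpha(1_C^\epsilon)$ --- exactly the sum--integral interchange you isolate as the delicate step (and note that the paper's $1_C^\epsilon$ need not vanish on $A$ unless $C$ and $\epsilon$ are chosen away from $A$, another point your localized bump handles automatically). Your dominated-convergence justification via a uniform Lipschitz bound on the partial sums is the honest version of that step; your own caveat is apt that this bound is supplied by the triangulation-based constructions of Sections~3--4 rather than by the bare ``locally Lipschitz finite Schauder basis'' hypothesis under which the theorem is stated, so in full generality both your argument and the paper's leave the same interchange to be checked.
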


\begin{proof}
    Let $\B= \{f_i\}_{i=0}^\infty$ be a l.l.f. Schauder Basis of $Lip_c(X,A)$, with upper Lipschitz bound $M$. \\
    
    Stability: First we consider the case that measures be unsigned. Let $\alpha, \beta \in \hat{\mathcal{M}}^+_1(X,A)$. Let $\pi$ be a partial matching of $\alpha$ and $\beta$ and let $A = \{i \in \mathbb{N} | \ \alpha(f_i) \geq \beta(f_i)\}$  and $B = \{i \in \mathbb{N}\ | \ \alpha(f_i) < \beta(f_i)\}$. Then, \\

    \begin{equation*}
        \begin{split}
            ||F(\alpha) - F(\beta)||_1&= \sum\limits_{i=0}^\infty |\alpha(f_i) - \beta(f_i)|\\
            &= \sum\limits_{i \in A} \left(\alpha(f_i) - \beta(f_i)\right) + \sum\limits_{i \in B} \left(\beta(f_i) - \alpha(f_i)\right)\\
            &=  \alpha \left(\sum\limits_{i \in A} f_i\right) - \beta \left(\sum\limits_{i \in A} f_i\right)  + \left( \beta\left(\sum\limits_{i \in B} f_i\right) - \alpha \left(\sum\limits_{i \in A} f_i\right)\right)\\
            &= \pi \left(\left(\sum\limits_{i \in A} f_i \oplus \left(- \sum\limits_{i \in A} f_i\right) \right) \right) + \pi \left(\left(\sum\limits_{i \in B} f_i\right) \oplus \left(-\sum\limits_{i \in B} f_i\right)\right) \textbf{  \ref{lem: couplings}}\\
            &\leq  \pi \left(\left| \sum\limits_{i \in A} f_i \oplus \left(- \sum\limits_{i \in A} f_i\right)  \right| \right) + \pi \left|\left(\sum\limits_{i \in B} f_i\right) \oplus \left(-\sum\limits_{i \in B} f_i\right|\right) \\
            &\leq  4M \cdot \pi(d(-,-))\\
            &= 4M \cdot Cost(\pi)
        \end{split}
    \end{equation*}

    Taking infemum over all partial matchings gives us, $||F(\alpha) - F(\beta) ||_1 \leq 4M \cdot W_1(\alpha, \beta)$. 

    To extend this to signed measures $\alpha, \beta \in \hat{\mathcal{M}}_1(X,A)$, recall that\\
    $W_1(\alpha, \beta ):= W_1(\alpha^++\beta^-, \beta^++\alpha^-)$. Then applying what we have shown above to unsigned measures $\alpha^++\beta^-$ and $ \beta^++\alpha^-$, 

    \begin{equation*}
        \begin{split}
          4M \cdot W_1(\alpha, \beta)& \geq ||F(\alpha^++\beta^-) - F(\beta^++\alpha^-)||_1\\
          &=||F(\alpha^+) +F(\beta^-) - F(\beta^+) -F(\alpha^-)||_1\\
          &= ||F(\alpha) - F(\beta)||_1
        \end{split}
    \end{equation*}

    We now prove injectivity. 
    Let $\alpha, \beta \in \hat{\mathcal{M}}_1(X,A)$ such that $\alpha \neq \beta$. Since $\alpha, \beta$ are Radon measures, there exists compact $C\subset X$ such that $\alpha (C) \neq \beta(C)$. \\

    For $\epsilon >0$, define $1_C^\epsilon$ to be the $\frac{1}{\epsilon}$-Lipschitz approximation of the indicator function on $C$. That is, $1_C^\epsilon(x) = \max\{0, 1-\frac{d(x,C)}{\epsilon}\}$. \\

    Note that as we let $\epsilon \rightarrow 0$, $1_C^\epsilon$ decreases pointwise to $1_C$. Thus by Dominated Convergence Theorem, $\alpha^+ (1_C^\epsilon) \rightarrow \alpha^+(1_C)$ and $\alpha^- (1_C^\epsilon) \rightarrow \alpha^-(1_C)$. Hence $\alpha (1_C^\epsilon) \rightarrow \alpha(1_C)$. similarly, $\beta(1_C^\epsilon)\rightarrow \beta(1_C)$. Thus there must exist some $\epsilon >0$ such that $\alpha(1_C^\epsilon) \neq \beta(1_C^\epsilon)$. Fix such an $\epsilon$. \\

    Let $\{a_i^\epsilon\}_{i=0}^\infty  $ be such that $\sum\limits_{i=0}^\infty a_i^\epsilon f_i = 1_C^\epsilon$. \\

    Therefore, $\sum\limits_{i=0}^\infty a_i^\epsilon \alpha(f_i) = \alpha(1_C^\epsilon)$, and similarly $\sum\limits_{i=0}^\infty a_i^\epsilon \beta(f_i) = \beta(1_C^\epsilon)$ . Thus since $\alpha (1_C^\epsilon) \neq \beta(1_C^\epsilon)$, there must exist some $f_i$ such that $\alpha(f_i) \neq \beta(f_i)$. \\

\end{proof}

The stability result has a tighter bound when $\B$ is derived from a Coxeter-Freudenthal-Kuhn triangulation, analogous to Theorem \ref{thrm:optimal bound}. 

\begin{corollary}\label{cor: CFK measures}
     Let $\{(T^n,S^n)\}_{n=0}^\infty$ be a Coxeter-Freudenthal Kuhn nested triangulation on  $(X,A)$ and let $\mathbb{B}$ be the Schauder basis of $Lip_c(X,A)$ of Lipschitz n-linear functionals as in definition \ref{thrm: Schauder Basis}, with Lipschitz constants $(L_n)_{n=0}^\infty$, $\sum_{n=0}^\infty L_n =L$. Then for any two  $\alpha, \beta \in \hat{\mathcal{M}}_1(X,A)$, 
    $$||F_\B(\alpha) - F_\B(\beta)||_1 \leq \sqrt{2d}\cdot L \cdot  W_1(\alpha, \beta)$$

    Furthermore, this bound is optimal.
\end{corollary}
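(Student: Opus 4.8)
The plan is to run the coupling argument of Theorem~\ref{thrm:measure stability} essentially verbatim, but to replace the crude estimate used there (a subsum of an l.l.f.\ basis is $2M$-Lipschitz) by the sharp layerwise bound of Lemma~\ref{lemma: CFK Lipschitzness}; this is exactly the improvement that turns Theorem~\ref{thrm: schauder stability} into Theorem~\ref{thrm:optimal bound}, now carried out in the measure setting. First I would reduce to unsigned measures: by linearity of $F_\B$ and the identity $W_1(\alpha,\beta) = W_1(\alpha^+ + \beta^-,\, \beta^+ + \alpha^-)$, it suffices to prove the inequality for $\alpha,\beta \in \hat{\mathcal{M}}_1^+(X,A)$. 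So fix such $\alpha,\beta$ and a coupling $\pi \in \Pi(\alpha,\beta)$, write $\B = \{f_i\}$, set $A = \{i \in \mathbb{N}: \alpha(f_i) \ge \beta(f_i)\}$ and $B = \mathbb{N}\setminus A$, and put $g_A := \sum_{i\in A} f_i$ and $g_B := \sum_{i \in B} f_i$.

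The crux is the claim $Lip(g_A),\, Lip(g_B) \le \sqrt{d/2}\cdot L$. Decompose $A = \bigsqcup_n A_n$ by layers, $A_n := A \cap V^{(n)}$, and correspondingly $g_A = \sum_n g_A^{(n)}$ with $g_A^{(n)} := \sum_{v \in A_n}\K_v$. Each $g_A^{(n)}$ is a subsum of the p.l.\ kernels of the CFK triangulation $T^n$ (at scale $z^{-n}$) with common Lipschitz constant $L_n$, so Lemma~\ref{lemma: CFK Lipschitzness} gives $Lip(g_A^{(n)}) \le \sqrt{d/2}\cdot L_n$. Since the $f_i$ vanish on $A$ and $\B$ is l.l.f., at each $x$ we have $\sum_i |f_i(x)| \le M\, d(x,A) < \infty$, so $g_A$ is a well-defined pointwise sum; its partial sums $\sum_{n < N} g_A^{(n)}$ are all Lipschitz with constant $\le \sum_n Lip(g_A^{(n)}) \le \sqrt{d/2}\sum_n L_n = \sqrt{d/2}\cdot L$, and a pointwise limit of uniformly $K$-Lipschitz functions is $K$-Lipschitz, hence $Lip(g_A) \le \sqrt{d/2}\cdot L$; moreover $g_A$ vanishes on $A$, so $g_A \in Lip(X,A)$ — it need not be compactly supported, but Lemma~\ref{lem: couplings} does not require that. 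The same applies to $g_B$. \emph{This is the step I expect to be the main obstacle}: one must handle the infinite layerwise sum (well-definedness, uniform Lipschitz control of the partial sums) and work in $Lip(X,A)$ rather than $Lip_c(X,A)$.

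With the claim in hand, Lemma~\ref{lem: couplings} applied to $g_A \oplus (-g_A)$ and to $g_B \oplus (-g_B)$ gives
\begin{align*}
  ||F_\B(\alpha) - F_\B(\beta)||_1
    &= \sum_{i \in A}\bigl(\alpha(f_i) - \beta(f_i)\bigr) + \sum_{i \in B}\bigl(\beta(f_i) - \alpha(f_i)\bigr) \\
    &= \bigl(\alpha(g_A) - \beta(g_A)\bigr) + \bigl(\beta(g_B) - \alpha(g_B)\bigr) \\
    &= \pi\bigl(g_A \oplus (-g_A)\bigr) + \pi\bigl(g_B \oplus (-g_B)\bigr) \\
    &\le \pi\bigl(\lvert g_A \oplus (-g_A)\rvert\bigr) + \pi\bigl(\lvert g_B \oplus (-g_B)\rvert\bigr).
\end{align*}
For any $x,y$ one has $\lvert g_A(x) - g_A(y)\rvert \le Lip(g_A)\, d(x,y)$ and also $\lvert g_A(x) - g_A(y)\rvert \le \lvert g_A(x)\rvert + \lvert g_A(y)\rvert \le Lip(g_A)\bigl(d(x,A) + d(y,A)\bigr)$, hence $\lvert g_A(x) - g_A(y)\rvert \le Lip(g_A)\,\bar d(x,y) \le \sqrt{d/2}\cdot L\cdot\bar d(x,y)$, and likewise for $g_B$; so the last line is at most $2\sqrt{d/2}\cdot L\cdot \pi(\bar d) = \sqrt{2d}\cdot L\cdot\pi(\bar d)$. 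Taking the infimum over $\pi \in \Pi(\alpha,\beta)$ yields $||F_\B(\alpha) - F_\B(\beta)||_1 \le \sqrt{2d}\cdot L\cdot W_1(\alpha,\beta)$ in the unsigned case, and the signed case follows from the reduction above.

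Finally, optimality is inherited directly from Theorem~\ref{thrm:optimal bound}. The persistence diagrams $D(X,A)$ form a subgroup of $\hat{\mathcal{M}}_1(X,A)$ on which the two notions of $W_1$ agree for one-point diagrams: the only coupling of $\delta_x$ and $\delta_y$ that is not beaten by routing both points to $A$ is $\delta_{(x,y)}$, so $W_1(x,y) = \bar d(x,y) = \min\{d(x,y),\, d(x,A)+d(y,A)\}$, which is exactly the minimal partial-matching cost, and $F_\B$ restricted to $D(X,A)$ is the diagram vectorization. Hence, for each $\epsilon>0$, the even dimension $d$, the weights $\{L_n\}$, and the points $x,y$ produced in the second half of the proof of Theorem~\ref{thrm:optimal bound} also give $\alpha = x,\ \beta = y \in \hat{\mathcal{M}}_1(X,A)$ realizing the bound $\sqrt{2d}\cdot L$ to within $\epsilon$, so this constant cannot be improved.
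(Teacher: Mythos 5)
Your proof is correct and is essentially the argument the paper intends: the paper gives no written proof of this corollary, saying only that it is ``analogous to Theorem \ref{thrm:optimal bound},'' and your combination of the coupling argument from Theorem \ref{thrm:measure stability} with the layerwise bound of Lemma \ref{lemma: CFK Lipschitzness} is exactly that analogy, with the added benefit that you explicitly justify the steps the paper glosses over (well-definedness and uniform Lipschitz control of the infinite layerwise sum, the fact that $g_A, g_B$ lie in $Lip(X,A)$ rather than $Lip_c(X,A)$, and the agreement of the two notions of $W_1$ needed to inherit optimality from the diagram case).
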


For a Schauder basis $\B$ derived from a nested triangulation of polyhedral pair $(X,A)$, we've shown that $F_\B$ has discriminating power on the set of persistence diagrams, and the superset of relative Radon measures. In fact, $\B$ is minimal for $F_\B$ to have discriminating ability over all of $\hat{M}_1(X,A)$.   That is, all functionals of the Schauder basis $\B= \{\K_v\}_{n \geq 0 \ : v \in V^{(n)}}$ are necessary for inducing an injective embedding.

\begin{theorem}\label{thrm:minimality}
    If $\B$ is a Schauder basis of $Lip_c(X,A)$ defined as in \ref{def:kernel} on a nested triangulation $\{(T^n, S^n)\}_{n=0}^\infty$. Then vectorization $F_\B: \hat{\mathcal{M}}_1(X,A) \rightarrow \ell_1$ of a Schauder basis formed as in definition \ref{def:kernel} is minimal. That is, for any $v \in V$, there exists $\alpha, \beta \in \hat{\mathcal{M}}_1(X,A)$, with $\alpha \neq \beta$ such that $\alpha(\K_{v'}) = \beta(\K_{v'})$ for all $v' \neq v$.

\end{theorem}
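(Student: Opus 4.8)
The plan is to exhibit, for each fixed $w\in V$, the coordinate functional of the Schauder basis $\B=\{\K_v\}_{v\in V}$ that is dual to $\K_w$, and to show it is realized by a genuine relative Radon measure --- in fact a finite signed combination of Dirac masses supported on vertices of the nested triangulation. Recall from the proof of Theorem \ref{thrm: Schauder Basis} that for $f\in Lip_c(X,A)$ the (unique) coefficients in $f=\sum_{v\in V}a_v\K_v$ are produced by the recursion $a_v=f(v)/\K_v(v)$ for $v\in V^{(0)}$ and $a_v=\bigl(f(v)-f^{n-1}(v)\bigr)/\K_v(v)$ for $v\in V^{(n)}$, $n\geq 1$, where $f^{n-1}=\sum_{m<n}\sum_{u\in V^{(m)}}a_u\K_u$ is the partial sum of $\B$ over layers $\leq n-1$.

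First I would prove, by induction on the layer $n$, that for every $v\in V^{(n)}$ there is a finite signed measure $\mu_v=\sum_u\lambda_{v,u}\,\delta_u$ supported on vertices $u\in V$ of layer $\leq n$, with $\lambda_{v,v}=1/\K_v(v)\neq 0$, such that $a_v=\mu_v(f)=\int_X f\,d\mu_v$ for all $f\in Lip_c(X,A)$. The base case $v\in V^{(0)}$ gives $\mu_v=\frac{1}{\K_v(v)}\delta_v$. For the inductive step one invokes local finiteness of the triangulations (Remark \ref{Remark: finite simplices}): only finitely many simplices of each $T^m$ contain the fixed vertex $v$, so only finitely many $u\in V^{(m)}$ have $\K_u(v)\neq 0$, and only the finitely many layers $m=0,\dots,n-1$ contribute; hence $f^{n-1}(v)=\sum_{m<n}\sum_u\K_u(v)\,a_u=\bigl(\sum_{m<n}\sum_u\K_u(v)\,\mu_u\bigr)(f)$ is the pairing of $f$ with a finite signed Dirac combination, and therefore so is $a_v=\frac{1}{\K_v(v)}\bigl(\delta_v-\sum_{m<n}\sum_u\K_u(v)\,\mu_u\bigr)(f)$.

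Next I would record the biorthogonality $\mu_v(\K_{v'})=\delta_{vv'}$, which is forced since the unique $\B$-expansion of the basis element $\K_{v'}$ is $\K_{v'}=\sum_v\delta_{vv'}\K_v$, so its $v$-th coefficient is exactly $\delta_{vv'}$. Then, fixing $w\in V$, split $\mu_w=\mu_w^+-\mu_w^-$ into finite nonnegative Dirac combinations supported on $V\subset X\setminus A$; since $\lambda_{w,w}=1/\K_w(w)>0$ by Proposition \ref{prop:kernels exist}, we have $\mu_w^+\neq\mu_w^-$, and each of $\mu_w^{\pm}$ is tight with locally finite $d_A$-mass, hence lies in $\hat{\mathcal{M}}_1^+(X,A)$. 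Setting $\alpha:=\mu_w^+$ and $\beta:=\mu_w^-$ gives distinct elements of $\hat{\mathcal{M}}_1(X,A)$ with $\alpha-\beta=\mu_w$, so $\alpha(\K_{v'})-\beta(\K_{v'})=\mu_w(\K_{v'})=\delta_{wv'}=0$ for every $v'\neq w$, while $\alpha(\K_w)-\beta(\K_w)=1\neq 0$. This is precisely the failure of $F_{\B\setminus\{\K_w\}}$ to separate $\alpha$ from $\beta$, establishing minimality of $\B$.

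The main obstacle is the inductive identification of the Schauder coordinate functionals with finite signed Dirac combinations: one must track which vertices appear at each stage and apply local finiteness to keep every sum finite --- this is exactly where the compactly supported, locally finite structure of $\B$ from Definition \ref{def:kernel} is essential, and where an arbitrary locally Lipschitz finite Schauder basis would not obviously work. Once that representation is in hand, the biorthogonality, the Jordan-type splitting, and the verification that $\mu_w^{\pm}\in\hat{\mathcal{M}}_1^+(X,A)$ are all routine.
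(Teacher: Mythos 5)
Your proof is correct, and at its core it produces the same object as the paper's argument: a nonzero, finitely supported signed combination of Dirac masses at vertices of layers $\leq n$ which pairs to zero against every $\K_{v'}$ with $v' \neq w$, built recursively layer by layer and kept finite by the local finiteness of Remark \ref{Remark: finite simplices}. The differences are in packaging and in how the orthogonality relations are certified. The paper fixes $\alpha = \delta_w$ (in your notation) and runs the recursion \emph{downward} from layer $n$ to layer $0$, building a correcting measure $\beta$ supported on vertices of the nested minimal simplices containing $w$ and verifying $\alpha(\K_{v'}) = \beta(\K_{v'})$ by direct computation at each stage; your $\K_w(w)\,\mu_w$ is, up to that normalization, exactly the paper's $\alpha - \beta$. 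You instead construct the full biorthogonal system $\{\mu_v\}_{v \in V}$ \emph{upward} from layer $0$ and obtain $\mu_v(\K_{v'}) = \delta_{vv'}$ for free from the uniqueness clause of Theorem \ref{thrm: Schauder Basis}, which replaces the paper's hands-on verification with a cleaner structural argument; the price is that you must justify that the coordinate functionals are realized by finite Dirac combinations, which is precisely where local finiteness enters for you, and you have correctly identified this as the only delicate point. A last cosmetic difference: your final pair $(\mu_w^+, \mu_w^-)$ consists of two unsigned measures, whereas the paper's $\beta$ is in general a signed measure paired against the Dirac mass $\delta_w$; both choices are legitimate elements of $\hat{\mathcal{M}}_1(X,A)$, so either establishes minimality.
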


\begin{proof}
    Fix $v \in V^{(n)}$ for some $n \geq 0$. Let $\alpha  = \delta_v$ be the Dirac measure on $v$. We will build a measure $\beta \neq \alpha$ such that $\alpha(\K_{v'} ) = \beta(\K_{v'})$ for all $v' \neq v$. 

    First note that for any $m >n$, and $w \in V^{(m)}$, $\alpha(\K_w)= \K_w(v) =0$. So we consider layers $\leq n$.
    
    Let $\beta^n = 0$ be the zero measure.  For layer $n-1$, there exists a unique simplex in $T^{n-1}$  of minimal degree, $\sigma_{n-1} = \langle w_0, ...w_{p_{n-1}}\rangle$ such that $x \in \sigma_{n-1}$. Then  the only functionals on vertices in layer $n-1$ that are nonzero on $v$ are those centered at vertices of $\sigma_{n-1}$. Let $\beta^{n-1} = \sum\limits_{i =0}^{n-1} \frac{\K_{w_i}(v)}{\K_{w_i}(w_i)} \delta_{w_i}$. Then for each $w_i \in \sigma_{n-1}$ , $\alpha(\K_{w_i}) = \K_{w_i}(v) = \beta^{n-1}(\K_{w_i})$. Furthermore, for $w \in V^{(n-1)}$, with $w \notin \sigma_{n-1}$, $\alpha(\K_w) =0 = \beta(\K_w)$.

    Continuing inductively on layers of the triangulation below $n-1$, let $0 < i \leq n$. Then there exists a unique minimal simplex $\sigma_{n-i} = \langle w^{n-i}_0, ...w^{n-i}_{p_i}\rangle  \in T^{n-i}$ such that $v \in \sigma_{n-i}$. If $j >i$, then $n-j < n-i< n$, and hence $\sigma_{n-i}$ is contained in $\sigma_{n-j}$. 

    Let $\beta^{n-i} = \beta^{n-i+1} + \sum\limits_{j=0}^{p_{n-i}} \frac{\K_{w_j}(v)- \beta^{n-i+1}(\K_{w_j})}{\K_{w_j}(w_j)} \delta_{w_j}$\\

    Then for each $w_j$, 

    \begin{equation*}
        \begin{split}
            \beta^{n-i} (\K_{w^{n-i}_j}) &=  \beta^{n-i+1}(\K_{w^{n-i}_j}) + \frac{\K_{w^{n-i}_j}(v) -\beta^{n-i+1}(\K_{w^{n-i}_j}
            )}{\K_{w^{n-i}_j}(w^{n-i}_j)} \delta_{w_j}(\K_{w^{n-i}_j})\\
            &= \beta^{n-i+1}(\K_{w_j}) + \K_{w^{n-i}_j}(v) -\beta^{n-i+1}(\K_{w_j})\\
            &= \K_{w^{n-i}_j}(v)\\
            &= \alpha(\K_{w^{n-i}_j})
        \end{split}
    \end{equation*}

    Thus we may continue this process iteratively, increasing $i$ through all layers $\leq n$. Then  $\beta := \beta^0$ will meet the desired criteria. That is, $\alpha \neq \beta$, and  $\alpha (\K_{v'}) = \beta(\K_{v'})$ for all $v' \in V$, with $v' \neq v$. Hence, the Schauder basis on vertices of a triangulation is minimal for distinguishing between relative Radon measures on $(X,A)$.

\end{proof}

 \begin{corollary}
           We may again recall the Schauder basis $\B$ of stacked functionals of Definition \ref{def:stacked functional}. For unsigned measure $\alpha \in \hat{M}^+_1(X,A)$, $||F_\B(\alpha)||_1 = W_1(\alpha, \emptyset)$.
   \end{corollary}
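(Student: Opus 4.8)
The plan is to reduce the claim to the identity $\sum_{v\in V}\mathfrak{K}_v = d(-,A)$ established inside the proof of Proposition~\ref{thrm:stacked schauder eval}, together with the Kantorovich--Rubinstein description of $W_1(\cdot,\emptyset)$ for relative Radon measures. First I would record that every stacked functional is nonnegative: by Proposition~\ref{prop:kernels exist} each kernel $\K_v^n$ is the piecewise-linear function that vanishes at all vertices $\neq v$ and is strictly positive at $v$, hence on every simplex it is a barycentric average of nonnegative vertex values, so $\K_v^n\geq 0$; since the scalar $\tfrac{\sqrt2\,d(v,A)(z^2-1)}{z^2}$ in Definition~\ref{def:stacked functional} is strictly positive (as $v\notin A$ and $z\geq 2$), also $\mathfrak{K}_v\geq 0$. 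Consequently, for $\alpha\in\hat{\mathcal{M}}^+_1(X,A)$ each coordinate $\alpha(\mathfrak{K}_v)=\int_X\mathfrak{K}_v\,d\alpha$ is nonnegative, so
\[
\norm{F_\B(\alpha)}_1=\sum_{v\in V}\abs{\alpha(\mathfrak{K}_v)}=\sum_{v\in V}\int_X\mathfrak{K}_v\,d\alpha .
\]

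Next, since $V$ is countable and every integrand is nonnegative, Tonelli's theorem lets me exchange sum and integral, and I would invoke the identity $\sum_{v\in V}\mathfrak{K}_v = d(-,A)$ proved for Proposition~\ref{thrm:stacked schauder eval} (which uses that $X$ is compact and that $d(-,A)$ is $n$-linear on the CFK triangulations):
\[
\sum_{v\in V}\int_X\mathfrak{K}_v\,d\alpha=\int_X\Bigl(\sum_{v\in V}\mathfrak{K}_v\Bigr)\,d\alpha=\int_X d(-,A)\,d\alpha .
\]
This is a genuine finite quantity: $\int_X d(-,A)\,d\alpha<\infty$ by the definition of $\hat{\mathcal{M}}^+_1(X,A)$.

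It then remains to identify $\int_X d(-,A)\,d\alpha$ with $W_1(\alpha,\emptyset)$. Since $X$ is compact, $d(-,A)\in Lip_c(X,A)$; it is $1$-Lipschitz and vanishes on $A$, and for any $1$-Lipschitz $f$ with $f|_A=0$ and any $x\in X$ one has $f(x)=f(x)-f(r_A(x))\leq d(x,A)$ for a nearest point $r_A(x)\in A$, so integrating against the nonnegative measure $\alpha$ gives $\alpha(f)\leq\alpha(d(-,A))$ with equality at $f=d(-,A)$. Hence $\alpha(d(-,A))$ equals the Kantorovich--Rubinstein norm of $\alpha$, which is $W_1(\alpha,\emptyset)$ (cf.\ the identification of $W_1$ with the Kantorovich--Rubinstein norm recalled in Section~\ref{sec:2.1}, extended to the Radon-measure setting in \cite{PeterAlex}); alternatively this follows from Lemma~\ref{lem: couplings}, since $\bar d\leq d(-,A)\oplus d(-,A)$ gives $\pi(\bar d)\leq\alpha(d(-,A))$ for every coupling $\pi$ of $\alpha$ and the zero measure, while taking the representative of $\emptyset$ supported on $A$ forces $\pi$ to live on $X\times A$, where $\bar d(x,y)=d(x,A)$, so $\pi(\bar d)=\alpha(d(-,A))$. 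Combining the three displays yields $\norm{F_\B(\alpha)}_1=W_1(\alpha,\emptyset)$.

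I expect the only non-bookkeeping step to be this last identification of $\int_X d(-,A)\,d\alpha$ with $W_1(\alpha,\emptyset)$ in the relative Radon-measure setting (handling the quotient by measures on $A$ and the existence of an optimal coupling); everything else is the nonnegativity of the stacked kernels, a routine application of Tonelli, and the already-established identity $\sum_{v\in V}\mathfrak{K}_v=d(-,A)$.
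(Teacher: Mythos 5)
Your proposal is correct and follows exactly the route the paper intends: the paper's proof of this corollary is the single sentence that it is ``analogous to the proof of Proposition~\ref{thrm:stacked schauder eval},'' whose core is precisely the identity $\sum_{v\in V}\mathfrak{K}_v = d(-,A)$ that you invoke. Your additional details --- nonnegativity of the stacked kernels, Tonelli to justify the interchange for a general unsigned Radon measure, and the coupling argument identifying $\alpha(d(-,A))$ with $W_1(\alpha,\emptyset)$ in the relative setting --- are exactly the bookkeeping the paper leaves implicit, and they check out.
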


The proof of this is analogous to the proof of Proposition \ref{thrm:stacked schauder eval}.

\printbibliography
\end{document}